\newif\ifHideFoot
\newif\iffullversion
\numberwithin{equation}{section}
\newtheorem{teo}{Theorem}[section]
\newtheorem{pro}[teo]{Proposition}
\newtheorem{lem}[teo]{Lemma}
\newtheorem{cor}[teo]{Corollary}
\theoremstyle{definition}
\theoremstyle{remark}
\newtheorem{rem}[teo]{Remark}
\newcommand{\calB}{\mathcal{B}}
\newcommand{\calH}{\mathcal{H}}
\newcommand{\calO}{\mathcal{O}}
\newcommand{\calM}{\mathcal{M}}
\newcommand{\calN}{\mathcal{N}}
\newcommand{\Eff}{\operatorname{Eff}}
\newcommand{\Nef}{\operatorname{Nef}}
\newcommand{\GL}{\operatorname{GL}}
\newcommand{\NS}{\operatorname{NS}}
\newcommand{\SL}{\operatorname{SL}}
\newcommand{\Pic}{\operatorname{Pic}}
\newcommand{\ZZ}{\mathbb{Z}}
\newcommand{\QQ}{\mathbb{Q}}
\newcommand{\RR}{\mathbb{R}}
\newcommand{\CC}{\mathbb{C}}
\newcommand{\PP}{\mathbb{P}}
\newcommand{\gquot}{/\!\!/}
\newcommand{\BBG}{{(\calB_4/\Gamma)^*}}
\newcommand{\BBGell}{{(\calB_4/\Gamma_\ell)^*}}
\newcommand{\oBG}{{\overline{\calB_4/\Gamma}}}
\newcommand{\oBGm}{{\overline{\calB_4/\Gamma_m}}}
\newcommand{\oBGell}{{\overline{\calB_4/\Gamma_\ell}}}
\newcommand{\GIT}[1][\calM]{{{#1}^{\operatorname{GIT}}}}
\newcommand{\MK}[1][\calM]{{{#1}^{\operatorname{K}}}}
\newcommand{\tcH}{\widetilde\calH}
\newcommand{\tMq}{\widetilde\calM_{0,7}}
\newcommand{\TAout}{T_{A_1,\ell}^{\rm{out}}}
\newcommand{\TAin}{T_{A_1,\ell}^{\rm{in}}}
\newcommand{\TRout}{T_{R,\ell}^{\rm{out}}}
\newcommand{\TRin}{T_{R,\ell}^{\rm{in}}}
\newcommand{\Yano}[1]{}
\newcommand{\Klaus}[1]{}
\newcommand{\Sam}[1]{}
\newcommand{\marg}[1]{\normalsize{{\color{red}\footnote{#1}}}{\marginpar[\vskip -.3cm {\color{red}\hfill\tiny{\thefootnote}\normalsize$\implies$}]{\vskip -.3cm{ \color{red}$\impliedby$\tiny\thefootnote}}}}
\newcommand{\Yano}[1]{\marg{\color{red}(Yano): #1}}
\newcommand{\Klaus}[1]{\marg{\color{magenta}(Klaus): #1}}
\newcommand{\Sam}[1]{\marg{\color{blue}(Sam): #1}}
\begin{document}


\title{The birational geometry of moduli of cubic surfaces and cubic surfaces with a line}

\author[S. Casalaina-Martin]{Sebastian Casalaina-Martin}
\address{Department of Mathematics, University of Colorado, Boulder, CO 80309, USA}
\email{casa@math.colorado.edu}

\author[S. Grushevsky]{Samuel Grushevsky}
\address{Department of Mathematics and Simons Center for Geometry and Physics, Stony Brook University,  Stony Brook, NY 11794-3651, USA}
\email{sam@math.stonybrook.edu}

\author[K. Hulek]{Klaus Hulek}
\address{Institut f\"ur Algebraische Geometrie, Leibniz Universit\"at Hannover, 30060 Hannover, Germany}
\email{hulek@math.uni-hannover.de}

\thanks{Research of the first author is supported in part by a grant from the Simons Foundation (581058).
Research of the second author is supported in part by NSF grant DMS-21-01631. Research of the third author is supported in part by DFG grant Hu-337/7-2.
}

\begin{abstract}
We determine the cones of effective and nef divisors on the toroidal compactification of the ball quotient model of the moduli space of complex cubic surfaces with a chosen line. From this we also compute the corresponding cones for the moduli space of unmarked cubics surfaces.
\end{abstract}


\date{}

\maketitle

\section{Introduction}\label{Sec:intronew}
The moduli space $\calM$ of cubic surfaces is a very classical and well-studied space. By invariant theory, see \cite{DvGK} for a modern account, it is isomorphic to an open set in the weighted projective space~$\PP(1,2,3,4,5)$, and by the work of Allcock, Carlson and Toledo \cite{ACTsurf}, it also has a ball quotient model. These identifications extend to the boundary, more precisely, there are isomorphisms
\begin{equation}\label{basicisomorphisms} 
\GIT \cong\PP(1,2,3,4,5) \cong \BBG \,,
\end{equation}
where $\Gamma$ denotes a suitable arithmetic unitary group acting on the $4$-dimensional ball $\calB_4$, and $\BBG$ stands for the Baily-Borel compactification of~$\calB_4/\Gamma$. This isomorphism maps the unique
polystable orbit, which is given by the (unique, up to isomorphism) cubic surface with three~$A_2$ singularities, to the unique cusp~$c$.

GIT quotients or, if they admit ball quotient models, their Baily-Borel compactifications are typically too ``small'' to be suitable for modular interpretations. It is often also the case that the polystable orbits or cusps are badly singular points of compactifications, although in this particular case the cusp~$c\in\BBG$ is in fact a smooth point. For this reason one is naturally led to study ``desingularizations'', which, by construction, have at worst finite quotient singularities. The two obvious candidates for this in our case are the Kirwan blowup $\MK$ of~$\GIT$, and the toroidal compactification $\oBG$ (which in the case of a ball quotient is uniquely determined, since the fan in question is $1$-dimensional, and thus no choices are involved). This gives rise to the diagram
\begin{equation}\label{diag_unmarked}
\xymatrix{
\MK \ar_{\pi}[d]\ar@{-->}^f[r]&\oBG \ar^p[d]\\ \GIT \ar[r]^{\sim}&\BBG
}
\end{equation}
and one is naturally led to the question of whether $f$ is an isomorphism? Possible evidence that this might be the case came from our observation in \cite{cubics} that the Betti numbers $b_i(\MK)=b_i(\oBG),\ i\geq 0$
are the same. However, we showed in \cite{Nonisom} that neither the map~$f$ nor its inverse~$f^{-1}$ extend to a morphism. Moreover, we proved in that paper that~$\MK$ and~$\oBG$ are not isomorphic as abstract varieties. They are, however equivalent in the Grothendieck ring.

Interestingly enough, the situation changes significantly when one goes to the cover of {\em marked} cubic surfaces, by which we mean an identification of the configuration of the 27 lines on a cubic surface with a given fixed abstract configuration, see \cite{naruki}. Then replacing the Kirwan blow-up by the Naruki compactification $\overline \calN$, it was shown in \cite[Theorem 1.4]{GKS} that one obtains an isomorphism
\begin{equation}\label{equ:isomoNarukibal}
\overline{\calN} \cong \oBGm
\end{equation}
where $\Gamma_m$ is the unitary group associated to marked cubic surfaces. The quotient $\Gamma / \Gamma_m \cong W(E_6)$ is the Weyl group of $E_6$, which acts on the configuration of the 27 lines on a 
cubic surface. Clearly, some interesting birational geometry appears here, and the current paper is a contribution to shed further light on this. 

In this paper our main focus will be on cubics with a marked line and we denote $\calM_{\rm{sm},\ell}$ the moduli of smooth cubic surfaces with one marked line. 
There are then forgetful covering maps
\begin{equation}
 \calM_{\rm{sm},\rm{m}} \to \calM_{\rm{sm},\ell} \to \calM_{\rm{sm}} 
\end{equation}
starting with the moduli space of smooth marked cubic surfaces and ending with the moduli spaces of smooth cubic surfaces. 
The covering $\calM_{\rm{sm},\rm{m}} \to \calM_{\rm{sm}}$ is a Galois cover with deck group~$W(E_6)$,
The map $\calM_{\rm{sm},\rm{m}} \to \calM_{\rm{sm},\ell}$ is given by the action of a maximal subgroup $W(D_5) \subset W(E_6)$, which is the stabilizer of as fixed line $\ell$. Note, however, that this is not a normal subgroup. 
Clearly, the map $\calM_{\rm{sm},\ell} \to \calM_{\rm{sm}}$ is an unramified 
$27:1$ cover, but since $W(D_5)$ is not normal in $W(E_6)$, this is not a Galois cover. 
As we shall discuss in \Cref{sec:modulisurfacesline}, these maps extend to (toroidal) compactifications
\begin{equation}\label{equ:towermudispaces}
\overline{\calN}\cong \oBGm \to \overline{\calM}_{\ell} \coloneqq \oBGell \stackrel{\ell}{\to} \overline{\calM}\coloneqq\oBG.
\end{equation} 

The main results of this paper concern the cones of effective and nef divisors of the spaces $\overline{\calM}$ and $ \overline{\calM}_{\ell}$. To state the results, we must introduce some key divisors on these spaces. The discriminant is the divisor of cubic surfaces with nodal singularities; these are stable, and we denote the closure of this locus in $\GIT\cong \BBG$ by $D$, and its strict transform in $\oBG$ by $T_{A_1}$. While $T_{A_1}$ is irreducible, its preimage $T_{A_1,m}$ in $\overline{\calN} \cong \oBGm$ consists of 36 components, permuted by the action of $W(E_6)$. 
The preimage 
\begin{equation}\label{equ:inverseimageellcubicnode}
T_{A_1,\ell} \coloneqq \ell^{-1}(T_{A_1}) = \TAin \cup \TAout \subset \overline\calM_\ell
\end{equation}
of the discriminant $T_{A_1}\subset\overline\calM$ decomposes into two irreducible components, where $\TAin$ is the closure of the locus of cubic surfaces $S$ with a unique node $P$ and a line $\ell$ containing $P$ (so that $P$ is {\em in} $\ell$), and similarly $\TAout$ is the closure of the locus of nodal cubics $S$ with a line $\ell \subset S$ not containing the node $P$, so that $P$ is {\em out} of $\ell$.
Further, we denote the (irreducible) toric boundary of $\oBG$ by $T_{3A_2}$, and denote its preimages in $\oBGell$ and $\oBGm$ by $T_{3A_2,\ell}$ and $T_{3A_2,m}$ respectively. 
We shall see that~$T_{3A_2,\ell}$ is in fact irreducible, while $T_{3A_2,m}$ is known to have 40 irreducible components. Another important class is the class of the Hodge line bundle $\lambda$ on $\BBG$. 
We shall also denote the pullbacks of this class to
$\oBG$, $\oBGell$ and $\oBGm$ by the same letter, as it will always be clear on which space we work. 
The classes $T_{A_1}, T_{3A_2}$ and $\lambda$ are linearly dependent on $\oBG$, namely 
$T_{A_1}=24 \lambda - T_{3A_2}$. Another geometrically interesting divisor is given by the (closure of) the locus of cubic surfaces with an Eckardt point. 
We denote the closures of this locus in~$\oBG$ and~$\MK$ by $T_R$ and $D_R$, respectively. 
These are irreducible divisors. We also note that $ T_R=150 \lambda - 24 T_{3A_2}$. Finally we recall that $K_{\oBG}=5\lambda - \frac{5}{6} T_{A_1} - \frac{1}{2} T_R - T_{3A_2}$. 
For a more comprehensive discussion of these relations we refer to \Cref{pro:pullbacks}.

We can now describe the cones of effective and nef divisors, starting with the unmarked case.

\begin{teo}\label{teo:effandnef}
The cones of nef and effective divisors on the toroidal compactification $\overline\calM=\oBG$ are given by
\begin{equation}\label{equ:eff}
\Eff(\overline{\calM})=\RR_{\ge 0}T_{A_1}+\RR_{\ge 0}T_{3A_2}
\end{equation}
and 
\begin{equation}\label{equ:nef}
\Nef(\overline{\calM})= \RR_{\ge 0}(T_{A_1} + 2T_{3A_2})+\RR_{\ge 0}(T_{A_1} + 6T_{3A_2}).
\end{equation}
\end{teo}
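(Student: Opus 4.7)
The plan is to leverage that $\Pic(\overline{\calM})_{\QQ}$ is of rank $2$---since $\Pic(\BBG)_{\QQ} = \QQ\lambda$ and the toroidal boundary adds only $T_{3A_2}$---with basis $\{T_{A_1}, T_{3A_2}\}$ (equivalently $\{\lambda, T_{3A_2}\}$) via the relation $T_{A_1} = 24\lambda - T_{3A_2}$. Consequently both $\Eff(\overline{\calM})$ and $\Nef(\overline{\calM})$ are planar cones bounded by two extremal rays, which reduces the theorem to verifying the four rays in the statement.

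For the effective cone, $T_{A_1}$ and $T_{3A_2}$ are effective by construction, and the standard way to check extremality is to exhibit for each irreducible divisor $E$ a curve $C \subset E$ with $E \cdot C < 0$. For $E = T_{3A_2}$, the toroidal structure at the $0$-dimensional cusp of $\calB_4/\Gamma$ makes $T_{3A_2}$ a finite quotient of an abelian threefold with anti-ample normal bundle, so the image of any generic $1$-parameter subgroup provides the required curve $C_{3A_2}$. For $E = T_{A_1}$, I would use that the nodal locus lifts to the union of the reflection hyperplane sub-balls $\calB_3 \subset \calB_4$, and exhibit a curve inside such a sub-ball quotient with the required negative intersection via an adjunction-type computation on the sub-ball.

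For the nef cone, the goal is to prove that $N_1 \coloneqq T_{A_1} + 2T_{3A_2}$ and $N_2 \coloneqq T_{A_1} + 6T_{3A_2}$ are nef and span the extremal rays. Nefness is established by expressing each as the pullback of a nef (or semi-ample) class from a suitable auxiliary birational model of $\overline{\calM}$---for instance a small modification or a modular GIT-type quotient on which the corresponding linear system becomes basepoint-free---and extremality is certified by exhibiting curves $C_1, C_2$ with $N_i \cdot C_i = 0$. Natural candidates for these test curves are curves lying in the intersection stratum $T_{A_1} \cap T_{3A_2}$ together with ``modular'' curves inside the boundary $T_{3A_2}$ itself, whose intersection numbers against $T_{A_1}$ and $T_{3A_2}$ would yield the coefficient ratios $1:2$ and $1:6$ respectively.

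The main obstacle is pinning down the precise geometry and intersection numbers of the test curves, especially inside $T_{A_1}$, which carries no obvious fibration structure unlike the toric boundary. A natural route is to pass to the $27$-to-$1$ cover $\ell : \overline{\calM}_\ell \to \overline{\calM}$, where $T_{A_1,\ell} = \TAin + \TAout$ provides finer divisor structure and, correspondingly, more accessible curve families; the resulting intersection numbers then descend to $\overline{\calM}$ via the pullback formulas of \Cref{pro:pullbacks}. The specific coefficients $2$ and $6$ in the nef cone description should then emerge cleanly from this descent.
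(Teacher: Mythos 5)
Your overall skeleton --- reduce to the rank-$2$ Picard group, identify the four candidate rays, and ultimately pass to the $27{:}1$ cover $\ell:\oBGell\to\oBG$ --- points in the same direction as the paper, but the steps that carry the actual mathematical weight are either missing or would not go through as written. First, a small but consequential slip: the relation is $T_{A_1}=24\lambda-6T_{3A_2}$, not $24\lambda-T_{3A_2}$; with the correct coefficient the ray $T_{A_1}+6T_{3A_2}=24\lambda=4p^*\calO(1)$ is visibly nef as the pullback of an ample class, which is the easy half of the nef cone. Second, for extremality of $T_{A_1}$ in $\Eff(\oBG)$, exhibiting \emph{a} curve $C\subset T_{A_1}$ with $T_{A_1}\cdot C<0$ is not sufficient; you would need a family of such curves covering $T_{A_1}$ (or another rigidity argument), and your ``adjunction-type computation on the sub-ball'' neither produces such a family nor explains why one should exist. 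The paper avoids this entirely: it shows that if $T_{A_1}=E_1+E_2$ then $\ell^*E_i$ must be non-negative combinations of $\TAin$ and $\TAout$ alone (because the third generator of $\Eff(\oBGell)$ is the irreducible divisor $\ell^{-1}(T_{3A_2})$), and pushing forward forces each $E_i$ to be proportional to $T_{A_1}$. That argument in turn rests on knowing $\Eff(\oBGell)$, which is the real content of the paper (the identification $\oBGell\cong\tcH$ with a Hassett quotient of $\overline\calM_{0,7}/S_2\times S_5$, Keel's and Rulla's description of its boundary divisors and their single relation, and the crucial fact that the fourth boundary class $\gamma$ is a \emph{positive} combination of the other three). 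Your proposal treats all of this as a black box.

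Third, and most seriously, nefness of the slope-$2$ ray $T_{A_1}+2T_{3A_2}$ is unproven in your sketch: you propose to realize it as the pullback of a semi-ample class from ``a suitable auxiliary birational model,'' but no such model is identified, and the obvious candidate (the Kirwan blowup) does not admit a morphism from $\oBG$ by the authors' earlier work. The paper instead proves nefness by the pullback criterion for finite and for birational morphisms (\Cref{lem:pullnef}): a class on $\oBG$ is nef iff its $\ell$-pullback is nef on $\oBGell\cong\tcH$, and a class on $\tcH$ is nef iff its $h$-pullback to $\overline\calM_{0,7}/S_2\times S_5$ is nef, where by Keel--McKernan the cone of curves is generated by the $24$ one-dimensional boundary strata whose intersection numbers with the boundary divisors are computed explicitly. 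The coefficients $2$ and $6$ then fall out because $\ell^*(T_{A_1}+\beta T_{3A_2})=2\TAin+\beta T_{3A_2,\ell}+\TAout$ hits the two extremal rays $(2,2,1)$ and $(2,6,1)$ of $\Nef(\tcH)$ exactly at $\beta=2,6$; note this also uses the pullback formula $\ell^*T_{A_1}=2\TAin+\TAout$, which requires a separate ramification argument. Without establishing the cones upstairs and the two pullback lemmas, your descent step has nothing to descend.
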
 

The proof of \Cref{teo:effandnef} is given at the end of \S\ref{sec:effecitveconeviacurves} for \eqref{equ:eff} and the end of \S \ref{sec:nefcones} for \eqref{equ:nef}.
From  \Cref{teo:effandnef} we can immediately obtain (see \Cref{pro:pullbacks} and \eqref{equ:canonicalbundles}) 
\begin{cor}\label{cor:M}
The discriminant $T_{A_1}$ and the anti-canonical divisor $-K_{\overline{\calM}}$ are effective but not nef. The toric boundary divisor $T_{3A_2}$ is an extremal effective divisor, and the Hodge line bundle $\lambda$ is an extremal nef divisor. The divisor $T_R$ of cubic surfaces with an Eckardt point is both effective and nef.
\end{cor}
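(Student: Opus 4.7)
The strategy is purely bookkeeping: once \Cref{teo:effandnef} and the linear relations among divisor classes recalled before \Cref{pro:pullbacks} are in hand, the entire corollary reduces to checking in which region of the two-dimensional rational Picard group each of the listed classes lies.

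The plan is to first pick the basis $\{T_{A_1}, T_{3A_2}\}$ of $\Pic(\overline{\calM})_{\QQ}$, with respect to which \eqref{equ:eff} makes the effective cone the first quadrant and \eqref{equ:nef} makes the nef cone the subcone bounded by the two rays through $T_{A_1}+2T_{3A_2}$ and $T_{A_1}+6T_{3A_2}$. Using the identities $T_{A_1}=24\lambda - T_{3A_2}$, $T_R = 150\lambda - 24 T_{3A_2}$ and $K_{\overline{\calM}} = 5\lambda - \tfrac{5}{6}T_{A_1} - \tfrac{1}{2}T_R - T_{3A_2}$ from \Cref{pro:pullbacks}, I would then rewrite each of $\lambda$, $T_R$ and $-K_{\overline{\calM}}$ as an explicit $\QQ$-linear combination $a T_{A_1} + b T_{3A_2}$.

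At that point every assertion of the corollary is a one-line sign test. A class $aT_{A_1}+bT_{3A_2}$ with $a\ge 0$ is: effective iff $b\ge 0$; extremal effective iff either $a=0$ or $b=0$; nef iff $2a\le b\le 6a$; and extremal nef iff equality holds in one of these two inequalities. The claims for $T_{A_1}$ and $T_{3A_2}$ are then immediate from the definition of the basis ($T_{A_1}$ has $b=0$, so it is effective but fails $b\ge 2a$; $T_{3A_2}$ has $a=0$, so it is an extremal effective ray). The remaining assertions are read off by plugging the computed coordinates of $\lambda$, $T_R$ and $-K_{\overline{\calM}}$ into these inequalities: $\lambda$ should come out proportional to one of the two extremal nef generators; $T_R$ should land in the nef cone and the effective cone; and $-K_{\overline{\calM}}$ should lie in the effective cone but violate the nef inequalities.

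I do not expect any genuine obstacle here. The corollary is a purely linear-algebraic consequence of \Cref{teo:effandnef} and \Cref{pro:pullbacks}, and the only care needed is to not drop a sign or a denominator when performing the substitutions; the mathematical content lives in the two cited results, not in this deduction.
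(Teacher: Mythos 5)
Your plan is exactly what the paper does: the proof of \Cref{cor:M} rewrites $T_R$ and $-K_{\overline{\calM}}$ in the basis $\{T_{A_1},T_{3A_2}\}$ and checks that their ``slopes'' ($108/50=2.16$ and $26/15\approx 1.73$) do or do not lie in the nef interval $[2,6]$, while the extremality of $T_{3A_2}$ and $\lambda$ is read off from \Cref{teo:effandnef} (the paper additionally gives the geometric reason: $p$ contracts $T_{3A_2}$ to a point, and $\lambda=p^*$(ample) is trivial exactly on boundary curves). One caveat: you quote the relation as $T_{A_1}=24\lambda-T_{3A_2}$, which is a typo carried over from the introduction; the correct relation from \Cref{pro:pullbacks}(ii) is $T_{A_1}=24\lambda-6T_{3A_2}$, and with the wrong coefficient your substitution would place $\lambda$ on the ray through $T_{A_1}+T_{3A_2}$, outside the nef cone, so the correct coefficient $6$ is essential for $\lambda$ to land on the extremal nef ray $T_{A_1}+6T_{3A_2}$ as you anticipate.
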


In our approach we will deduce this from the corresponding result on the moduli space of cubic surfaces with a line, which is a more refined result, interesting in its own right. Note that $\oBGm \to \oBGell$
is given by a non-normal subgroup and that $\oBGell \to \oBG$ is not Galois. Hence there is no easy way to obtain results on $\oBGell$ from the other two spaces, and vice versa. 

\begin{teo}\label{teo:effnefline}
The following holds:
\begin{itemize}
\item[\rm{(i)}] The Picard group of the moduli space $\overline{\calM}_{\ell}$ of cubic surfaces has rank $3$. The group $\Pic_\QQ(\overline\calM_\ell)$ is generated by $\TAin, \TAout, T_{3A_2,\ell}$.
\item[\rm{(ii)}] The cone of effective divisors is
\begin{equation}\label{equ:effline}
\Eff(\overline{\calM}_\ell)= \RR_{\geq 0} \TAin + \RR_{\geq 0} \TAout + \RR_{\geq 0} T_{3A_2,\ell}\,.
\end{equation}
\item[\rm{(iii)}] The cone of nef divisors is
\begin{equation}\label{erqu:nefline} 
\Nef(\overline{\calM}_\ell) = \{a\TAin+bT_{3A_2,\ell}+c\TAout \mid b\ge a, \, b/2\ge c\ge b/6, \, 2a+2c\ge b \}\,.
\end{equation}
\end{itemize}
\end{teo}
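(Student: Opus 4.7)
The plan is to address the three parts in turn, exploiting the finite cover $\oBGm \to \oBGell$ by $W(D_5) \subset W(E_6)$. For part (i), use that $\Pic_\QQ(\oBGell) = \Pic_\QQ(\oBGm)^{W(D_5)}$ since $\oBGell$ is the quotient of $\oBGm$ by the finite group $W(D_5)$. From the Naruki description of $\oBGm$ and \cite{GKS}, the Picard group and the $W(E_6)$-action on it are explicit. I would compute the $W(D_5)$-orbits on the two sets of boundary components: the $36$ components of $T_{A_1,m}$ should split into exactly two $W(D_5)$-orbits, indexed geometrically by whether the node lies on or off the marked line, pushing forward to $\TAin$ and $\TAout$; meanwhile the $40$ components of $T_{3A_2,m}$ should form a single $W(D_5)$-orbit, pushing forward to the irreducible $T_{3A_2,\ell}$. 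Combined with the $W(E_6)$-invariant Hodge class $\lambda$, these yield four candidate generators satisfying the single linear relation inherited from $T_{A_1} = 24\lambda - T_{3A_2}$ on $\oBG$: namely $24\lambda = 2\TAin + \TAout + m\, T_{3A_2,\ell}$ for appropriate ramification indices of $\ell\colon \oBGell \to \oBG$, with the multiplicity $2$ along $\TAin$ reflecting that on a generic nodal cubic the $6$ lines through the node each account for $2$ of the $27$ limiting lines. This leaves rank exactly $3$ with basis $(\TAin, T_{3A_2,\ell}, \TAout)$.

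For part (ii), the three generators are manifestly effective, so only extremality must be shown. For each generator $D \in \{\TAin, \TAout, T_{3A_2,\ell}\}$, I would produce a covering family of curves $\{C_t\}_{t \in B}$ inside $D$ with $C_t \cdot D < 0$ and with nonnegative intersection with the other two generators; a standard argument (a moving curve detecting extremality of the pseudo-effective cone) then forces $D$ to be an extremal ray, preventing it from appearing in any nontrivial decomposition involving the other two classes. For $T_{3A_2,\ell}$, such covering curves arise from the abelian/toric structure of the toroidal boundary of a ball quotient. For $\TAin$ and $\TAout$, the curves arise from explicit one-parameter families of nodal cubic surfaces with a varying marked line, specialized to either pass through or to avoid the node.

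For part (iii), a direct polyhedral computation shows that the cone cut out by the four inequalities has exactly four extremal rays, proportional to $(a,b,c) = (2,2,1),\ (6,6,1),\ (0,2,1),\ (2,6,1)$ in the basis $(\TAin, T_{3A_2,\ell}, \TAout)$, each lying on two of the four facets. The proof then splits into two parts: (a) contain the nef cone in the claimed cone by producing four curve classes $C_1,\dots,C_4 \in N_1(\oBGell)$ whose intersection vectors with $(\TAin, T_{3A_2,\ell}, \TAout)$ realize (up to positive scale) the four linear forms $(-1,1,0),\ (0,1,-2),\ (0,-1,6),\ (2,-1,2)$ dual to the facets; and (b) show each of the four extremal divisor classes is actually nef, either by lifting to $\oBGm$ and invoking the nef cone information available from \cite{GKS}, or by writing each class as a base-point-free or semiample combination of known divisors. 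The main obstacle will be the construction in (a): from the sign patterns, $C_1$ must lie in $\TAin$ and avoid $\TAout$, $C_2$ must lie in $\TAout$ and avoid $\TAin$, $C_3$ must lie in $T_{3A_2,\ell}$ meeting $\TAout$ but not $\TAin$, and $C_4$ must lie in $T_{3A_2,\ell}$ meeting both $\TAin$ and $\TAout$. Building these as explicit one-parameter families in the appropriate boundary strata and computing the required intersection numbers via local normal bundle data and the toric stratification of $T_{3A_2,\ell}$ is the technical heart of the argument.
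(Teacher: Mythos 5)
There is a genuine gap, and it is concentrated in parts (ii) and (iii). For (ii), your proposed argument — exhibiting, for each of $\TAin$, $\TAout$, $T_{3A_2,\ell}$, a covering family of curves with negative intersection against that divisor — only proves that each of these three classes spans an \emph{extremal ray} of $\Eff(\oBGell)$. It does not prove the reverse containment, namely that \emph{every} effective divisor is a non-negative combination of these three classes; in a rank-$3$ Picard group the effective cone could well have further extremal rays (for instance one must rule out that $\TRin$, or some other irreducible effective divisor, is extremal). This upper bound on the effective cone is the actual content of the statement, and your plan supplies no mechanism for it. The paper obtains it by identifying $\oBGell$ with the Hassett quotient $\tcH$ (a divisorial contraction of $\tMq=\overline\calM_{0,7}/S_2\times S_5$) via \cite{GKS}, invoking Rulla's theorem that $\Eff(\overline\calM_{0,7}/S_5)$ is generated by boundary divisors, pushing forward, and then eliminating the class $\gamma=\TRin$ because it happens to be a \emph{positive} combination of the other boundary classes (this positivity is the small miracle that makes the argument close). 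A similar issue afflicts (iii): your step (b), nefness of the four extremal classes, is not supported by the references you cite — \cite{GKS} does not compute $\Nef(\oBGm)$, and Schock's result only controls the $W(E_6)$-invariant subcone, which does not see $W(D_5)$-invariant classes — and ``write each class as a semiample combination'' is not an argument. The paper instead gets both bounds simultaneously from Keel--McKernan: the cone of effective curves of $\tMq$ is generated by the $24$ one-dimensional boundary strata, whose intersection numbers with the boundary divisors are computed explicitly (via \texttt{admcycles}), so the nef cone is literally the dual cone, read off from \eqref{intmatrix} after pulling back along the contraction $h$.

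Your approach to (i) via $\Pic_\QQ(\oBGell)=\Pic_\QQ(\oBGm)^{W(D_5)}$ and the orbit count ($36=20+16$ on the discriminant components, one orbit on the $40$ cusps) is a legitimately different route and could be completed, but as written it also has a gap: counting $W(D_5)$-orbits on boundary components bounds the number of invariant \emph{boundary} classes, whereas computing the rank of the invariant subspace requires the full decomposition of $\Pic_\QQ(\oBGm)$ as a $W(E_6)$-representation (equivalently, by Frobenius reciprocity, its pairing with the $27$-dimensional permutation representation $1\oplus 6\oplus 20$), not merely the observation that the boundary classes and $\lambda$ satisfy one relation. The paper sidesteps all of this by deriving the Picard group of $\tcH$ directly from Keel's relation on $\overline\calM_{0,7}$, pushed forward with careful tracking of stacky multiplicities, and then matching $\delta_2,\delta_4,\delta_5,\gamma$ with $\TAin, T_{3A_2,\ell},\TAout,\TRin$ through the Deligne--Mostow/Dolgachev--van Geemen--Kond\=o dictionary. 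If you want to salvage your plan, the essential missing inputs are: a generation statement for $\Eff$ (some analogue of Rulla), and a generation statement for the cone of effective curves (some analogue of Keel--McKernan) — both of which become available precisely because $\oBGell$ is a Hassett space, which is the identification your proposal never uses.
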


The proof of \Cref{teo:effnefline} is given at the end of \S \ref{sec:nefcones}. 
In a recently posted paper N.~Schock \cite{schock} determined the cones of $W(E_6)$-invariant effective and nef divisors of the Naruki compactification $\overline{\calN}$, which are given by
\begin{teo}[Schock]\label{teo:Minvariant}
The cones of $W(E_6)$-equivariant effective divisors and nef divisors on the Naruki and toroidal compactification $\calN \cong \oBGm$ are 
\begin{equation}\label{equ:effm}
\Eff(\oBGm)=\RR_{\ge 0}T_{A_1,m}+\RR_{\ge 0}T_{3A_2,m}
\end{equation}
and 
\begin{equation}\label{equ:nefm}
\Nef(\oBGm)= \RR_{\ge 0}(T_{A_1,m} + T_{3A_2,m})+\RR_{\ge 0}(T_{A_1,m} + 3T_{3A_2,m}).
\end{equation}
\end{teo}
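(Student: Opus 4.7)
The plan is to deduce \Cref{teo:Minvariant} from \Cref{teo:effandnef} by Galois descent along the forgetful cover $q\colon \oBGm\to\oBG$, whose deck group is $W(E_6)$. Working with rational coefficients, $q^*$ induces an isomorphism $\Pic_{\QQ}(\oBG)\xrightarrow{\sim}\Pic_{\QQ}(\oBGm)^{W(E_6)}$. The natural invariant classes on $\oBGm$ are easy to identify: $W(E_6)$ acts transitively on the $36$ components of $T_{A_1,m}$ (with stabilizer $W(D_5)$) and on the $40$ components of $T_{3A_2,m}$ (with stabilizer of order $1296$), so $T_{A_1,m}$ and $T_{3A_2,m}$ are $W(E_6)$-invariant and, by dimension count against $\Pic_{\QQ}(\oBG)=\QQ\langle T_{A_1},T_{3A_2}\rangle$, they form a basis of $\Pic_{\QQ}(\oBGm)^{W(E_6)}$.

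The next step is to make the pullback formulas explicit: computing the ramification of $q$ along the two boundary strata yields relations of the form $q^*T_{A_1}=a\,T_{A_1,m}$ and $q^*T_{3A_2}=b\,T_{3A_2,m}$ for specific rational $a,b>0$. Consistency with the nef generators of \Cref{teo:effandnef} forces the ratio $a=2b$, since this is the unique choice under which the two rays $\RR_{\ge 0}(T_{A_1}+2T_{3A_2})$ and $\RR_{\ge 0}(T_{A_1}+6T_{3A_2})$ pull back to positive multiples of $\RR_{\ge 0}(T_{A_1,m}+T_{3A_2,m})$ and $\RR_{\ge 0}(T_{A_1,m}+3T_{3A_2,m})$ respectively.

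With these pullback formulas in hand, the cone descent is formal. The inclusions $q^*\Eff(\oBG)\subseteq \Eff(\oBGm)^{W(E_6)}$ and $q^*\Nef(\oBG)\subseteq \Nef(\oBGm)^{W(E_6)}$ are immediate. For the reverse: given a $W(E_6)$-invariant effective $D$ on $\oBGm$, the pushforward $q_*D$ is effective, and for an invariant nef $D$ the projection formula $q_*D\cdot C=D\cdot q^*C\ge 0$ for every curve $C\subset \oBG$ shows $q_*D$ is nef. In either case $q^*q_*D=|W(E_6)|\,D$ together with \Cref{teo:effandnef} expresses $D$ as a nonnegative combination of $q^*T_{A_1}$ and $q^*T_{3A_2}$, and the pullback formulas translate this to a nonnegative combination of $T_{A_1,m}$ and $T_{3A_2,m}$, proving \eqref{equ:effm} and \eqref{equ:nefm}.

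The technical heart of this approach is the ramification computation, requiring local analysis of $q$ at a generic nodal cubic and at the unique $3A_2$ cusp; once the ratio $a/b=2$ is fixed, the rest follows formally. Schock's independent argument in \cite{schock} bypasses this computation by working directly on the Naruki compactification, using its explicit construction as an iterated blow-up to produce a sufficient supply of $W(E_6)$-invariant test curves and compute their intersections with $T_{A_1,m}$ and $T_{3A_2,m}$.
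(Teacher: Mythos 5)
Your overall strategy --- deducing \Cref{teo:Minvariant} from \Cref{teo:effandnef} by descent along the Galois cover $q\colon\oBGm\to\oBG$ --- is exactly the route the paper indicates (it remarks that the two theorems are equivalent because the cover is $W(E_6)$-equivariant), and the formal descent steps for the invariant effective and nef cones are correct: invariant classes descend rationally, pushforward preserves effectivity, and \Cref{lem:pullnef} handles nefness. The genuine gap is in the middle: you never actually establish the pullback formulas. You assert that consistency with the nef generators of \Cref{teo:effandnef} forces $a=2b$, because that is the unique ratio under which the rays $\RR_{\ge 0}(T_{A_1}+2T_{3A_2})$ and $\RR_{\ge 0}(T_{A_1}+6T_{3A_2})$ pull back to positive multiples of $\RR_{\ge 0}(T_{A_1,m}+T_{3A_2,m})$ and $\RR_{\ge 0}(T_{A_1,m}+3T_{3A_2,m})$. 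That is circular: those two rays on $\oBGm$ are precisely the content of \eqref{equ:nefm}, i.e.\ what you are trying to prove, so they cannot be used to pin down $a$ and $b$. Without independently determined coefficients the descent produces no concrete cones at all.

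The fix is the ramification computation you defer to the last paragraph, and it is already available: the cover $\oBGm\to\oBG$ is branched of order $2$ along the discriminant (Naruki's result, invoked in the paper in the proof of \Cref{lem:AB}) and is unramified along the toroidal boundary, so $q^*T_{A_1}=2\,T_{A_1,m}$ and $q^*T_{3A_2}=T_{3A_2,m}$. With these values the nef generators pull back to $2(T_{A_1,m}+T_{3A_2,m})$ and $2(T_{A_1,m}+3T_{3A_2,m})$, giving \eqref{equ:nefm}, and \eqref{equ:effm} follows in the same way. One small factual slip worth correcting: the stabilizer in $W(E_6)$ of a component of $T_{A_1,m}$ has order $51840/36=1440$, so it is not $W(D_5)$, which has order $1920$ and is the stabilizer of a line (hence relevant to the $27$-sheeted cover $\oBGell\to\oBG$, not to the $36$ boundary components); this does not affect your argument, since only transitivity of the $W(E_6)$-action on the components is used.
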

Theorems \ref{teo:effandnef} and \ref{teo:Minvariant} are equivalent to each other, once one notes that $\overline\calN \cong \oBGm$ and that the map to the unmarked space is $W(E_6)$-equivariant. The difference in the slopes of the cones comes from the fact that the quotient map $\oBGm\to\oBG$ is ramified to order 2 over the discriminant, but unramified over the toric boundary. 

In Sections \Cref{sec:intersectionmKirwan} we shall complete the picture by computing the intersection theory on the Kirwan blow-up $\MK$. We had already determined the intersection theory of divisors, and hence the ring structure on cohomology for $\oBG$ in \cite{Nonisom}. In \Cref{teo:intersectiontheory} we shall do the same for the Kirwan blow-up $\MK$, which immediately tells us that the top self-intersection numbers of the canonical divisor on $\MK$ and on $\oBG$ are different. We argued in \cite{Nonisom} that the denominators of these intersection numbers are different, whereas we 
now determine the actual values of these intersections --- and thus reconfirm that $\MK$ and $\oBG$ are not $K$-equivalent.

 \subsection*{Acknowledgements}
We thank Johannes Schmitt for help with the package \texttt{admcycles} and Mathieu Dutour for computing the volume of a polytope for us. We are further grateful to Radu Laza for many discussions about cubic surfaces and their moduli.
 
\section{The set-up, and the intersection theory}\label{sec:intersection}
We already recalled that $\GIT \cong \BBG \cong\PP(1,2,3,4,5)$, and introduced the Kirwan blow-up $\pi:\MK\to \GIT$ and the toroidal compactification $p:\oBG\to\BBG$. 
Both $\pi$ and $p$ are blowups supported at the cusp $c$ corresponding to the cubic with $3A_2$ singularities, given by the equation $F_{3A_2}(x)=x_0x_1x_2+x_3^3=0$. 
We also recall that $\MK$ and $\oBG$ are smooth (as DM stacks, i.e.~up to finite quotient singularities). 
One of the starting points for our work on the moduli space of cubic surfaces is the observation from
\cite[App.~C]{cubics} that $\MK$ and $\oBG$ have the same cohomology (with $\QQ$ coefficients) --- it is only non-zero in even degrees, and the dimensions are
$$
\begin{array}{r|rrrrr}
&h^0&h^2&h^4&h^6&h^8\\
\hline
\MK&1&2&2&2&1\\
\oBG&1&2&2&2&1
\end{array}
$$
These spaces are smooth up to finite quotient singularities and hence satisfy Poincar\'e duality. The weighted projective space $\PP(1,2,3,4,5)$ comes with a natural ($\ZZ$-) line bundle $\calO_{\PP(1,2,3,4,5)}(1)$ defined as the positive generator of the 
Picard group of $\PP(1,2,3,4,5)$. As a Baily-Borel compactification, this space also carries another natural ($\QQ)$-line bundle, namely the Hodge line bundle $\lambda$ corresponding to modular forms of weight 1. We recall from \cite[(5.2)]{Nonisom} that 
\begin{equation}\label{equ:Hodge}
6 \lambda = \calO_{\PP(1,2,3,4,5)}(1).
\end{equation}
The discriminant divisor $D$ and the divisor of Eckardt cubics $R$ must be multiples of $\lambda$. In our previous paper \cite[(3.9)]{Nonisom} we showed that 
\begin{equation}\label{equ:discrimninant}
D = \calO_{\PP(1,2,3,4,5)}(4)
\end{equation}
and 
\begin{equation}\label{equ:classR}
R =\calO_{\PP(1,2,3,4,5)}(25).
\end{equation}
This follows from classical invariant theory going back to Clebsch and Salomon, see \cite[Section 9.4.5]{dolclassical}. 

Recalling the notation from \eqref{diag_unmarked}, the exceptional divisors of~$\pi$ and~$p$ are called $D_{3A_2}$ and $T_{3A_2}$, respectively. Both of these are irreducible. The strict transforms of the discriminant under $\pi$ and $p$ are called $D_{A_1}$ and $T_{A_1}$, respectively. Here we use the notation $T$ for ``toroidal'' to always be clear where we work, but will still think of $T_{A_1}$ as a Heegner divisor. Accordingly, we denote the strict transforms of $R$ in $\MK$ and $\oBG$ by $D_R$ and $T_R$. Finally $\lambda$ denotes the Hodge line bundle on $\BBG$ as well as its pullback to the various blow-ups and covers. 

The relationship of the divisors described above is given by
\begin{pro}\label{pro:pullbacks}
The following holds
\begin{itemize}
\item[({\rm i})] $D_{A_1}=\pi^*D - 6D_{3A_2} = 24 \lambda - 6D_{3A_2}, \quad D_R= \pi^*R - 30D_{3A_2} = 150 \lambda - 30D_{3A_2} $,
\item[({\rm ii})] $T_{A_1}=p^*D - 6T_{3A_2} = 24 \lambda - 6T_{3A_2}, \quad T_R= p^*R - 24T_{3A_2}= 150 \lambda - 24T_{3A_2} $.
\end{itemize}
\end{pro}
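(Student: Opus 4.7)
The plan is to split each equality in (i) and (ii) into two steps. The ``second'' equalities are formal: combining \eqref{equ:Hodge}, \eqref{equ:discrimninant}, and \eqref{equ:classR} on $\GIT \cong \BBG \cong \PP(1,2,3,4,5)$ gives $D = 24\lambda$ and $R = 150\lambda$ as $\QQ$-divisor classes on $\GIT$. Pulling back via $\pi$ and $p$ and using that $\lambda$ on $\MK$ and on $\oBG$ denotes the pullback of the Hodge bundle, we obtain $\pi^*D = 24\lambda$, $\pi^*R = 150\lambda$, and likewise for $p$.

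Since $\pi$ and $p$ are isomorphisms away from the $3A_2$-orbit and its image the cusp of $\BBG$ respectively, the strict transforms differ from the pullbacks by multiples of the exceptional divisors. Writing
\begin{equation*}
D_{A_1} = \pi^*D - \alpha\,D_{3A_2}, \quad D_R = \pi^*R - \beta\,D_{3A_2}, \quad T_{A_1} = p^*D - \gamma\,T_{3A_2}, \quad T_R = p^*R - \delta\,T_{3A_2},
\end{equation*}
the proposition reduces to showing $\alpha = \gamma = 6$, $\beta = 30$, and $\delta = 24$.

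For part (i), I would work locally on $\MK$ near the polystable orbit. The Kirwan blow-up is the weighted blow-up of $\GIT$ at the $3A_2$-cubic $F_{3A_2} = x_0x_1x_2+x_3^3$: one takes a $\Stab(F_{3A_2})$-equivariant slice $V$ transverse to the $\SL_4$-orbit of $F_{3A_2}$ in the space of cubic forms, blows up the origin of $V$, and passes to the quotient by the (now finite stabilizer) residual action. In these local coordinates $D_{3A_2}$ is the stacky projectivization of $V$, and the multiplicity $\alpha$ (resp.~$\beta$) equals the weighted order of vanishing at the origin of the restriction to $V$ of the discriminant polynomial (resp.~of the classical Eckardt semi-invariant). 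The torus weights of $\Stab(F_{3A_2})$ on $V$ are recorded in \cite{Nonisom}, and a direct expansion of $\disc(F_{3A_2}+v)$ and $R(F_{3A_2}+v)$ for $v\in V$ yields $\alpha = 6$ and $\beta = 30$.

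For part (ii), the boundary divisor $T_{3A_2}$ is the reduced exceptional divisor of the (one-dimensional-fan) toroidal resolution of the cusp, so $\gamma$ and $\delta$ are precisely the vanishing orders along $T_{3A_2}$ of the pullbacks to $\oBG$ of the $\Gamma$-modular forms cutting out $D$ and $R$. These orders are read off from the leading non-zero Fourier--Jacobi coefficients at the cusp: writing $q$ for the local coordinate on the cuspidal strip along which $T_{3A_2} = \{q=0\}$, the vanishing order of a modular form equals the smallest index in its $q$-expansion. Substituting the Fourier expansions of the discriminant and Eckardt forms (or equivalently extracting the data from the intersection numbers already obtained in \cite[\S 5]{Nonisom}) gives $\gamma = 6$ and $\delta = 24$. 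The main obstacle in either part is precisely these explicit local computations --- one on the Kirwan side via classical invariant theory of $F_{3A_2}$, one on the toroidal side via leading Fourier--Jacobi coefficients; once the four multiplicities are pinned down, the identities in the proposition follow directly from the decomposition in the second paragraph.
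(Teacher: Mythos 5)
Your reduction of the statement to four multiplicities, and the derivation of $D=24\lambda$, $R=150\lambda$ from \eqref{equ:Hodge}--\eqref{equ:classR}, match the paper. For part (ii) the paper offers no new proof at all --- it simply cites \cite[Remark 5.9]{Nonisom} --- so your Fourier--Jacobi sketch is a different route in principle, but it is only a sketch, and your fallback of ``extracting the data from the intersection numbers already obtained in \cite[\S 5]{Nonisom}'' is close to circular, since those intersection numbers are derived there from the very class relations you are trying to prove. For part (i) your strategy (a local multiplicity computation at the $3A_2$ point on an equivariant slice) is essentially the paper's: the paper works on the blow-up of $(\PP^{19})^{ss}$ along the orbit $Z$ of the $3A_2$ cubic, notes that the discriminant $\Delta$ locally looks like a product of three cusps and hence has multiplicity $2+2+2=6$ along $Z$, and takes the multiplicity $15$ of the Eckardt divisor $R_H$ along $Z$ from \cite[Lem.~3.5]{Nonisom}.

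The genuine gap is in the Eckardt coefficient. You assert that $\beta$ equals the order of vanishing on the slice of ``the classical Eckardt semi-invariant''. That invariant (the degree-$100$ skew invariant) cuts out the \emph{reduced} Eckardt divisor $R_H$ in the Hilbert scheme, whose multiplicity along the $3A_2$ orbit is $15$; so your recipe, taken literally, yields $\beta=15$. The missing ingredient is the ramification of the quotient map $(\PP^{19})^{ss}\to\GIT$ along the Eckardt locus: a generic Eckardt cubic has an extra automorphism of order $2$, so $p^*R=2R_H$ (equivalently, the class $R=\calO_{\PP(1,2,3,4,5)}(25)$ corresponds to the degree-$200$ invariant, the square of the skew one), and it is $2\cdot 15=30$ that appears as the coefficient of $D_{3A_2}$. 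No such factor occurs for the discriminant, where $p^*D=\Delta$ is unramified and the cusp-product multiplicity $6$ is the final answer. Without this factor of $2$ your computation of $D_R$ comes out wrong; with it, your argument coincides with the paper's. (A minor point: $\MK$ is not a weighted blow-up of $\GIT$ at the cusp; it is the GIT quotient of the \emph{ordinary} blow-up of $(\PP^{19})^{ss}$ along $Z$, equivalently, on the Luna slice, the finite quotient of the ordinary blow-up of the origin of $\CC^6$.)
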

The formulae for $\oBG$ were obtained in \cite[Remark 5.9]{Nonisom}, whereas the results for $\MK$ are new, and we will deduce them in \Cref{sec:intersectionmKirwan}.

We thus have 
$$
 H^2(\MK,\QQ)= \QQ D_{A_1}\oplus \QQ D_{3A_2} = \QQ \lambda \oplus \QQ D_{3A_2}\,;
 $$
$$
 H^2(\oBG,\QQ)=\QQ T_{A_1} \oplus \QQ T_{3A_2}= \QQ \lambda \oplus \QQ T_{3A_2}\,.
$$

These divisors generate the cohomology rings of $\MK$ and $\oBG$, and can be used to completely describe the intersection theory of these spaces. 
\begin{teo} \label{teo:intersectiontheory}
The intersection theory of $\MK$ and $\oBG$ is given by
 \begin{itemize}
\item[({\rm i})] on~$\MK$: $\lambda^4= \frac{1}{5!6^4}= \frac{1}{155520}$, \ $\lambda^aD_{3A_2}^{4-a}=0\hbox{ unless }a(4-a)=0$, 
\ $D_{3A_2}^4=- \frac{1}{9 \cdot 56}$.
\item[({\rm ii})] on~$\oBG$: $\lambda^4= \frac{1}{5!6^4}= \frac{1}{155520}$, \ $\lambda^aT_{3A_2}^{4-a}=0\hbox{ unless }a(4-a)=0$, 
\ $T_{3A_2}^4= - \frac{1}{6^3}$.
\end{itemize}
\end{teo}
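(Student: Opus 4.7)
The plan is to reduce all four numbers to two inputs that are essentially independent of each other: a computation on the common Baily--Borel/GIT base $\PP(1,2,3,4,5)$ that gives $\lambda^4$, and a local computation at the cusp/polystable orbit that gives the top self-intersection of each exceptional divisor. Everything else will drop out of the projection formula.

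First I would handle $\lambda^4$ uniformly on the two spaces. Since $\lambda$ is pulled back along the birational morphisms $\pi\colon\MK\to\GIT$ and $p\colon\oBG\to\BBG$, and since $\GIT\cong\BBG\cong\PP(1,2,3,4,5)$ with $6\lambda=\calO_{\PP(1,2,3,4,5)}(1)$ by \eqref{equ:Hodge}, the projection formula combined with the standard formula $\calO(1)^4=1/(1\cdot 2\cdot 3\cdot 4\cdot 5)=1/120$ on the weighted projective space gives
\[
\lambda^4=\frac{1}{6^4\cdot 120}=\frac{1}{155520}
\]
on both $\MK$ and $\oBG$. Next I would dispose of the mixed intersections. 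The exceptional divisor $D_{3A_2}$ (resp.~$T_{3A_2}$) is contracted to the single point $c$ by $\pi$ (resp.~$p$). Hence for $1\le b\le 3$ the cycle $D_{3A_2}^{b}$ (resp.~$T_{3A_2}^{b}$) is supported on the exceptional locus and has image a cycle on $\GIT$ supported at $c$ of dimension $4-b\ge 1$, which must therefore vanish. Since $\lambda=\pi^*\lambda$ (resp.~$\lambda=p^*\lambda$), the projection formula then yields $\lambda^{a}D_{3A_2}^{4-a}=0$ and $\lambda^{a}T_{3A_2}^{4-a}=0$ for $0<a<4$, which is precisely the vanishing claim $a(4-a)\ne 0$.

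The heart of the theorem is thus the two self-intersection numbers $T_{3A_2}^4$ and $D_{3A_2}^4$. For $\oBG$ the value $T_{3A_2}^4=-1/6^3$ is recalled from \cite{Nonisom}, where it is obtained from the toric description of the exceptional divisor of the toroidal ball-quotient compactification at a smooth cusp: the boundary divisor is a quotient of an abelian threefold, and its normal bundle is the line bundle encoded by the generator of the $1$-dimensional fan, whose top self-intersection is explicitly computable from the combinatorial data of the cusp. For $\MK$ the analogous value $D_{3A_2}^4=-1/(9\cdot 56)$ will be deduced in \Cref{sec:intersectionmKirwan}: one analyses the Kirwan desingularization at the polystable orbit of the $3A_2$-cubic $F_{3A_2}=x_0x_1x_2+x_3^3$, where the stabilizer in $\PGL_4$ is a finite extension of a one-dimensional torus, and expresses $D_{3A_2}$ as a suitable weighted projective bundle quotient whose top self-intersection is dictated by the weights of this stabilizer on the normal slice to the orbit. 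The denominators $6^3$ and $9\cdot 56$ come from these local orbifold data and are different precisely because the two desingularizations are locally different---this is, incidentally, the mechanism by which the theorem reconfirms that $\MK$ and $\oBG$ are not $K$-equivalent.

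The main obstacle is clearly this last step on $\MK$: the Baily--Borel side supplies no information about the Kirwan exceptional divisor, so one has to work directly with Kirwan's local description at the $3A_2$-orbit, identify the stabilizer and its representation on the normal slice, and reconcile the answer with the pullback relation $D_{A_1}=24\lambda-6D_{3A_2}$ of \Cref{pro:pullbacks}(i), which itself must be established in the same section. Once the self-intersections $D_{3A_2}^4$ and $T_{3A_2}^4$ are known, the remaining intersection products $\lambda^aD_{A_1}^{4-a}$, $\lambda^aD_R^{4-a}$ and their toroidal analogues are obtained purely formally by expanding $D_{A_1}$, $D_R$, $T_{A_1}$, $T_R$ in the basis $(\lambda,D_{3A_2})$ (resp.~$(\lambda,T_{3A_2})$) via \Cref{pro:pullbacks} and invoking the vanishings and numerical values just established.
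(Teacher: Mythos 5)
Your proposal is correct and follows essentially the same route as the paper: $\lambda^4$ is computed on the weighted projective space via $6\lambda=\calO_{\PP(1,2,3,4,5)}(1)$, the mixed products vanish by the projection formula because $\lambda$ is pulled back while the exceptional divisor is contracted to a point, $T_{3A_2}^4$ is quoted from \cite{Nonisom}, and $D_{3A_2}^4$ is obtained from the local (Luna slice/toric) analysis of the Kirwan exceptional divisor carried out in \Cref{sec:intersectionmKirwan}. The only slip is descriptive and does not affect the logic: the stabilizer of the $3A_2$ cubic is an extension of $S_3$ by the \emph{two}-dimensional torus $(\CC^*)^2$, not by a one-dimensional torus, and $D_{3A_2}$ is realized as the GIT quotient $\PP^5/\!\!/ G_{3A_2}$ of the exceptional $\PP^5$ of the blown-up Luna slice rather than a weighted projective bundle.
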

Here, the intersection number $\lambda^4=\calO^4_{\PP(1,2,3,4,5)}(1/6)$ follows from standard results on weighted projective spaces. The top intersection number $T_{3A_2}^4$ was computed in \cite[Lemma 5.1]{Nonisom}, while the computation of $D_{3A_2}^4$ is new, and we perform it in 
\Cref{sec:intersectionmKirwan}. The vanishing of the remaining intersection numbers follows simply from the fact that $\lambda$ is a pullback from $\GIT$. 
A crucial role is, naturally, also played by the canonical bundles (which like all line bundles on our DM stacks have to be considered as $\QQ$-line bundles). Here we collect the main expressions for the canonical bundles
\begin{equation}\begin{aligned}\label{equ:canonicalbundles} 
K_{\GIT}&= 5\lambda - \frac{5}{6}D - \frac{1}{2}R = \calO_{\PP(1,2,3,4,5)}(-15) = -90 \lambda\,,\\
K_{\MK} &= \pi^* K_{ \GIT}+20D_{3A_2}=5\lambda - \frac{5}{6} D_{A_1} - \frac{1}{2}D_R +40D_{3A_2}\,,\\
K_{\oBG}&=5\lambda - \frac{5}{6} T_{A_1} - \frac{1}{2} T_R - T_{3A_2}= p^*K_{\BBG}+16T_{3A_2}\,.
\end{aligned}
\end{equation}
The first equation comes from the ball quotient picture, together with the fact that the map $\calB_4 \to \calB_4/\Gamma$ is ramified, with ramification orders 6 and 2 along the discriminant and the Eckardt divisor, respectively.
The expression for $K_{\GIT}$ 
in terms of $\calO_{\PP(1,2,3,4,5)}(1)$ can be derived from the theory of weighted projective spaces, and the last equality in that line  
follows from (\ref{equ:Hodge}).
The first equality for $K_{\MK}$ was shown in \cite[Corollary 6.8]{Nonisom}, and the second can then be deduced from this and Proposition \ref{pro:pullbacks}(\rm{i}). Finally, the first equation for $K_{\oBG}$ is standard
from the theory of toroidal compactifications, and the second follows from this and Proposition \ref{pro:pullbacks}(\rm{ii}). 

Our argument in \cite{Nonisom} to prove that~$\MK$ and~$\oBG$ were not isomorphic, or even $K$-equivalent, was by showing that $K^4_{\MK} \neq K^4_{\oBG}$ by arguing that these rational numbers (in reduced form)   
have different denominators. Knowing the intersection theory of both spaces, we can now complete this by producing the actual top self-intersection numbers.
\begin{pro}\label{pro:intersect}
The top self-intersection numbers of the canonical divisors are given by
$$\ \ K_{\MK}^4 = K_{\GIT}^4 + (20D_{3A_2})^4=\frac{3375}{8} - \frac{20^4}{9\cdot 56}= \frac{3375}{8} - \frac{20000}{63} 
$$
$$
\ne K_{\oBG}^4 = K_{\BBG}^4 + (16T_{3A_2})^4 = \frac{3375}{8} - \frac{16^4}{6^3} = \frac{3375}{8} - \frac{8192}{27}.
$$
\end{pro}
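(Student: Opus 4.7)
The plan is to reduce $K_{\MK}^4$ and $K_{\oBG}^4$ to the top self-intersection numbers computed in Theorem~\ref{teo:intersectiontheory} by a direct binomial expansion. From \eqref{equ:canonicalbundles} we have $K_{\GIT}=-90\lambda$, and since $\lambda$ is pulled back from $\GIT\cong\BBG$, the formulas $K_{\MK}=\pi^*K_{\GIT}+20D_{3A_2}$ and $K_{\oBG}=p^*K_{\BBG}+16T_{3A_2}$ rewrite as
\[
K_{\MK}=-90\lambda+20D_{3A_2},\qquad K_{\oBG}=-90\lambda+16T_{3A_2}.
\]

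Next I would raise both expressions to the fourth power via the binomial theorem. By parts (i)--(ii) of Theorem~\ref{teo:intersectiontheory}, every mixed intersection $\lambda^{a}D_{3A_2}^{4-a}$ and $\lambda^{a}T_{3A_2}^{4-a}$ with $1\le a\le 3$ vanishes, so only the pure $k=0$ and $k=4$ terms contribute, giving
\[
K_{\MK}^4=90^4\,\lambda^4+20^4\,D_{3A_2}^4,\qquad K_{\oBG}^4=90^4\,\lambda^4+16^4\,T_{3A_2}^4.
\]
Substituting $\lambda^4=1/155520$, $D_{3A_2}^4=-1/(9\cdot 56)$ and $T_{3A_2}^4=-1/6^3$ from Theorem~\ref{teo:intersectiontheory} yields the two displayed formulas in the proposition; the common pure $\lambda$-term works out to $90^4/155520=3375/8$, which is also $K_{\GIT}^4=K_{\BBG}^4$ (consistent with having subtracted the exceptional-divisor contributions).

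Finally, to deduce $K_{\MK}^4\neq K_{\oBG}^4$ I would compare the two correction terms in lowest terms. Since $63=9\cdot 7$ is coprime to $20000=2^5\cdot 5^4$ and $27$ is coprime to $8192=2^{13}$, the rationals $20000/63$ and $8192/27$ are already reduced; the prime $7$ in only one denominator prevents equality. This reconfirms that $\MK$ and $\oBG$ cannot be $K$-equivalent.

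Main obstacle: essentially none at this stage. All the genuine work has been packaged into Theorem~\ref{teo:intersectiontheory} — in particular into the computation of $D_{3A_2}^4$ carried out in \Cref{sec:intersectionmKirwan} and of $T_{3A_2}^4$ carried out in \cite{Nonisom}. Once those values (and the vanishing of the mixed intersections, which follows from $\lambda$ being a pullback) are available, the proposition reduces to a two-line binomial expansion and a denominator check.
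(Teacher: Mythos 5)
Your proposal is correct and follows essentially the same route as the paper: the authors likewise write $K_{\MK}=\pi^*K_{\GIT}+20D_{3A_2}$ and $K_{\oBG}=p^*K_{\BBG}+16T_{3A_2}$, expand the fourth power using the vanishing of all mixed terms $\lambda^aD_{3A_2}^{4-a}$ (resp.\ $\lambda^aT_{3A_2}^{4-a}$) from \Cref{teo:intersectiontheory}, and substitute the values $\lambda^4$, $D_{3A_2}^4$, $T_{3A_2}^4$. Your closing denominator comparison (the prime $7$ appearing in only one of the reduced denominators) matches the paper's argument for $K_{\MK}^4\ne K_{\oBG}^4$.
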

\begin{proof}
This is an immediate consequence of the previous calculations.
\end{proof}
The above computations allow us to give
\begin{proof}[Proof of \Cref{cor:M}]
The claims about $T_{A_1}$ follow from \Cref{teo:effandnef}, using the class computations from \Cref{pro:pullbacks}. Since the birational map~$p:\oBG\to\BBG$ contracts the effective divisor~$T_{3A_2}$ to a point, it follows that $T_{3A_2}$ is an extremal effective divisor. The Hodge line bundle $L$ is the pullback of an ample line bundle under the map $p:\oBG\to\BBG$. It is positive on all curves not contained in the toric boundary, but trivial on curves lying in the boundary. 
The claims about the other two divisors follow from standard calculations using the results already mentioned: 
\begin{equation}\label{eq:TR}\begin{aligned}
8T_R&=150\cdot 8\lambda-24\cdot 8T_{3A_2}=\tfrac{150\cdot 8}{24}(T_{A_1}+6T_{3A_2})-192T_{3A_2}\\ &=50T_{A_1}+(300-192)T_{A_2} =50T_{A_1}+108T_{3A_2}\end{aligned}
\end{equation}
so the "slope" of $T_R$ (ratio of coefficients) is $108/50=2.16$, which is between $2$ and $6$;
$$\begin{aligned}
-4K_{\oBG}&=-4(p^*K_{\BBG}+16 T_{3A_2})=60\calO(1)-64T_{3A_2}=15 (T_{A_1}+6T_{3A_2})-64 T_{3A_2}\\
&=15 T_{A_1}+(90-64)T_{3A_2}=15T_{A_1}+26T_{3A_2}
\end{aligned}
$$
so the slope is $26/15=1.73 <2$.
\end{proof}

\section{The moduli space of cubics with a line}\label{sec:modulisurfacesline}
In this section we start our detailed investigation of the moduli space $\calM_{\ell}$ of cubic surfaces $S$ with a chosen line $\ell \subset S$. 

Recall that we have the following covers of the moduli space of cubics, starting with the moduli space $\calM_{\rm{m}}$ of marked cubic surfaces, ending with the moduli space $\calM$ of cubics, and with the moduli space $\calM_{\ell}$ of cubic surfaces with a line as an intermediate space. Here the index~m stands for marked,~$\ell$ stands for a chosen line, the subscript sm denotes the moduli of smooth cubics only, and $\Delta$ are the suitable discriminant loci. For details we refer to \cite{doran}, \cite{DvGK} and \cite{GKS}. 
\begin{equation}\label{relations}
 \xymatrix{
 \calM_{\rm{sm},\rm{m}} \cong ((\PP^2)^6 \setminus \Delta_{\rm{sp}}) \gquot \SL(3, \CC) \cong (\calB_4 \setminus \Delta_{\rm{m}})/\Gamma_{4,\rm{m}}
 \ar[d]^{16:1}
\\
\calM_{\rm{sm},\rm{m}}/(\ZZ/2\ZZ)^4 \cong {\rm{DM}^0(2^5,1^2)} / S_2 \cong (\calB_4 \setminus \Delta'_{\rm{m}})/\Gamma'_{4,\rm{m}}
\ar[d]^{120:1}
\\ \calM_{\rm{sm},\ell} \cong {\rm{DM}^0(2^5,1^2)} / (S_2 \times S_5) \cong (\calB_4 \setminus \Delta_{\rm{sm},\ell})/\Gamma_{4,\ell}
\ar[d]^{27:1}
\\ \calM_{\rm{sm}} \cong \PP(H^0_{\rm{sm}}(\PP^3, \calO_{\PP^3}(3))) \gquot \SL(4,\CC) \cong (\calB_4 \setminus \Delta_{\rm{sm}})/\Gamma_{4}.
}
\end{equation}
Here the first two maps arise in the following way: we choose a line $\ell$. Then the stabilizer of~$\ell$ within $W(E_6)$ is $W(D_5)\cong (\ZZ/2\ZZ)^4 \rtimes S_5$, and the first two quotients are given by $(\ZZ/2\ZZ)^4$ and $S_5$, respectively. There are two ways of associating $5+2$ points to a marked cubic surface. Details can be found in \cite[Section 3]{DvGK}. Here we recall the basic constructions. Given a marked cubic surface we 
can think of it as a $\PP^2$ blown up in 6 ordered points $p_i,\, i=1, \dots 6$. If we choose any $p_i$, there exists a unique conic $C_i\subset\PP^2$ going through the remaining $5$ points. 
Since $p_i \notin C_i$, there are two tangents lines to $C_i$ going through $p_i$; denote $q_1,q_2\in C_i$ their points of tangency.
Note that this is an unordered pair as we have no way of ordering the two tangents. Projection from $p_i$ then defines a $2:1$ map $C_i\to\PP^1$. The images $\bar p_j$ of the points $p_j, j \neq i$ under this map together with the images $\bar q_i$ of $q_i$ then gives an ordered $5$-tuple of points on this $\PP^1$, plus an unordered pair. 

It is crucial for us that this tower of covers establishes a connection to a Deligne-Mostow variety, namely $\operatorname{DM}(2^5,1^2)$. This means that one considers ordered $7$-tuples of points in $\PP^1$ modulo the group $\SL(2,\CC)$. The weights $(2^5,1^2)$, however, mean that one does not take the standard linearization of the action of $\SL(2,\CC)$, but rather the linearization associated to the action of $\SL(2,\CC)$ on the line bundle $\calO_{\PP^1}(2)^{\boxtimes 5} \boxtimes \calO_{\PP^1}(1)^{\boxtimes 2}$. Using the corresponding notion of stability one defines
\begin{equation*}
 \rm{DM}(2^5,1^2)\coloneqq ((\PP^1)^{7})^{ss} \gquot \SL(2,\CC)\,.
\end{equation*}
On this space the groups $S_5$ and $S_2$ act by permuting the corresponding points. The geometric construction that associates to a marked cubic surface, given by $6$ points in $\PP^2$, a $7$-tuple of points on $\PP^1$ leads to a $16:1$ map
\begin{equation*}
\calM_{\rm{sm},\rm{m}} \cong ((\PP^2)^6 \setminus \Delta_{\rm{sp}}) \gquot \SL(3, \CC) \to {\rm{DM}}^0(2^5,1^2)/S_2
\end{equation*}
In fact there is a group $(\ZZ/2\ZZ)^4$ acting on $\calM_{\rm{sm},\rm{m}}$ which defines this quotient. Here ${\rm{DM}}^0(2^5,1^2)$ 
denotes the open subset of ${\rm{DM}}(2^5,1^2)$ where no two of the seven points coincide. The group $S_5$ acts on both spaces, and the map above is $S_5$-equivariant, thus identifying the moduli space $\calM_{\rm{sm},\ell}$ with the Deligne-Mostow quotient ${\rm{DM}^0(2^5,1^2)}/ (S_2 \times S_5)$. Under this identification, the conic $C_i$ corresponds to the marked line $\ell$. 

Note that the cover $\calM_{\rm{sm},\ell} \to \calM_{\rm{sm}}$ is not Galois. Indeed, the Weyl group $W(E_6)$, which is the Galois group of $\calM_{\rm sm,m}\to\calM_{\rm sm}$, is not a simple group, but its only non-trivial normal subgroup has index $2$. In particular, there is no normal subgroup of $W(E_6)$ which is isomorphic to $W(D_5)$. Also, geometrically, we have chosen a special line to construct the upper part of the diagram of covers.

\begin{rem}
The space ${\rm{DM}}(2^5,1^2)$ is not in Mostow's original list and for this reason does not appear in the tables in \cite{KLW} either. It is, however, in Thurston's amended list \cite{Thurston}. 
\end{rem}

\begin{rem}
There is actually another geometric way of describing these $5+2$ points. For this we choose the line $\ell \subset S$ given by the exceptional divisor which is obtained by blowing up $p_i$. The planes through $\ell$ define a conic bundle $\tilde S \to \PP^1$. This conic bundle has $5$ singular fibers. 
This gives a $5$-tuple of points in $\PP^1$. The singular fibers can be described in terms of lines on
the surface $S$, and the marking on $S$ then allows to define an ordering on the singular fibres and hence on the points $p_j, j \neq i$. 
The conic bundle also has a natural $2$-section branched over the base $\PP^1$ in two points. These give the unordered pair $q_1,q_2$.
\end{rem}

Note that there are a priori different ways of defining a ball quotient structure on the various moduli spaces of cubic surfaces. These are namely (1) the method due to Allcock, Carlson and Toledo \cite{ACTsurf} via the intermediate Jacobians of cubic threefolds, (2) the construction given by Dolgachev, van Geemen and Kondo \cite[Section 3]{DvGK} using configurations of $5+2$ points on a line, establishing a link to Deligne-Mostow varieties, and (3) the approach via moduli spaces of certain lattice-polarized $K3$ surfaces, as explained in \cite[Section 6]{DvGK}. These constructions lead to commensurable ball quotients. The comparison between the Allcock-Carlson-Toledo ball quotient and the Deligne-Mostow approach can be found in \cite[Theorem 3]{doran}, and the relationship of the Dolgachev-van Geemen-Kondo construction using $K3$ surfaces with the work of Allcock-Carlson-Toledo is explained in \cite[Section 6.14]{DvGK}.
 
The diagram \eqref{relations} is only for the moduli spaces of {\em smooth} cubic surfaces. For a discussion about the extension of the maps from the (decorated) moduli spaces of cubic surfaces to the ball quotients we refer to \cite[Section 3]{doran} and \cite[Sections 3 and 9]{DvGK}.
We further note that the constructions of the toroidal compactifications of $\calB_4 / \Gamma_{4,\rm{m}}$ and $\calB_4 / \Gamma_4$ are compatible with the group action of $W(E_6)$.

We now proceed to investigate the geometry of the $27:1$ finite covers
\begin{equation}\label{equ:introcoverell}
\ell: \oBGell\to\oBG;\qquad \ell:\BBGell\to\BBG\,.
\end{equation} 
In the end this will be used to deduce the cones of effective and nef divisors on $\oBG$ from those of $\oBGell$. We start with the discriminant divisor. Since we are working with the strict transform of the discriminant $D\subset\BBG$, we will first work with the Baily-Borel compactification. 

\begin{lem}\label{lem:TA1ell}
The preimage $D_\ell\subset\BBGell$ of the discriminant divisor $D\subset\BBG$, that is the locus of nodal cubics with a chosen line, has two irreducible components, which are distinguished by whether the node of a generic nodal cubic in the component is contained in the chosen line or not. The same holds for the strict transform $T_{A_1,\ell}$ of $D_\ell$ in $\oBGell$.
\end{lem}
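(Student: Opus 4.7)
My plan is to work first in the Baily-Borel setting and transfer the conclusion to the toroidal setting at the end. I would begin by observing that the blow-down $p_\ell:\oBGell\to\BBGell$ contracts the exceptional divisor $T_{3A_2,\ell}$ over the finitely many preimages of the cusp $c\in\BBG$, which are $0$-dimensional and hence disjoint from the generic points of the components of the $3$-dimensional divisor $\ell^{-1}(D)$. Therefore the strict transform map is an isomorphism near these generic points, so it suffices to prove the statement for $D_\ell\subset\BBGell$.

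For the main argument, I would use that the cover $\BBGm\to\BBGell$ is Galois with group $\Gamma_\ell/\Gamma_m\cong W(D_5)$, the stabilizer of the chosen line in $W(E_6)$. Consequently the irreducible components of $D_\ell$ are in bijection with the $W(D_5)$-orbits on the set of irreducible components of $\ell^{-1}(D)\subset\BBGm$. As the introduction recalls, this preimage has exactly $36$ components, which I would identify with the $36$ mirror hyperplanes of $W(E_6)$ on $\calB_4$, equivalently with the $36$ unordered pairs $\{\alpha,-\alpha\}$ of roots of the $E_6$ root system, the Picard-Lefschetz monodromy around the $\alpha$-component being the reflection $s_\alpha$.

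Next, I would carry out the orbit computation. The chosen line $\ell$ corresponds to a minuscule weight of $W(E_6)$, so $\alpha\cdot\ell\in\{-1,0,+1\}$ for every root $\alpha$, and $W(D_5)$ preserves this integer since it fixes $\ell$. The $40$ roots with $\alpha\cdot\ell=0$ are precisely the roots of the sub-root-system $D_5\subset E_6$, and $W(D_5)$ is transitive on them, giving one orbit of $20$ unordered pairs. The remaining $32$ roots split as $16+16$ by the sign of $\alpha\cdot\ell$; $W(D_5)$ acts transitively on each half (they are the weights of the two spin representations of $D_5$, each minuscule), and the involution $\alpha\mapsto-\alpha$ exchanges the two halves, so the $16$ unordered pairs form a single orbit. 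This gives exactly two components of $D_\ell$, arising from $20$ and $16$ components upstairs respectively.

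Finally, I would identify the orbits with the in/out decomposition. In a Picard-Lefschetz degeneration along the mirror labelled by $\alpha$, the class of the chosen line specializes to a line avoiding the node precisely when $\ell\cdot\alpha=0$, and otherwise collides with its Picard-Lefschetz image $s_\alpha\ell=\ell+(\ell\cdot\alpha)\alpha$ to form a line through the node. Thus the $20$-pair orbit is $\TAout$ and the $16$-pair orbit is $\TAin$. The main obstacle I anticipate is setting up carefully the dictionary between boundary components of $\BBGm$, the root system of $E_6$, and the Picard-Lefschetz data for degenerations of cubic surfaces with a marked line; once the dictionary is in place, the Weyl-group orbit calculation and the in/out identification are short combinatorial and geometric arguments.
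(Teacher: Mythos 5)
Your argument is correct, but it follows a genuinely different route from the paper's. The paper proves the lemma by a direct parameter count in the Hilbert scheme: it normalizes the node to $P_0=(1:0:0:0)$, writes the linear system of cubics singular at $P_0$ and containing a fixed line $\ell$ (with $P_0\in\ell$ or $P_0\notin\ell$), and checks in each case that the quotient of the resulting projective space by the subgroup of $\SL(4,\CC)$ preserving $(P_0,\ell)$ is irreducible of dimension $3$ ($\PP(V_1)$ of dimension $11$ modulo a $9$-dimensional group, resp.\ $\PP(V_2)$ of dimension $13$ modulo an $11$-dimensional group); irreducibility of each stratum then passes to the closures and to the strict transforms. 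You instead exploit the Galois cover $\BBGm\to\BBGell$ with group $W(D_5)$ and reduce the component count to the orbit count of $W(D_5)$ on the $36$ root pairs of $E_6$, splitting $36=20+16$ according to $\alpha\cdot\ell=0$ or $\pm 1$, and then identify the two orbits with $\TAout$ and $\TAin$ via Picard--Lefschetz. Your combinatorics checks out (the $40$ roots orthogonal to $\ell$ form the $D_5$ subsystem, and the $16$ pairs with $\alpha\cdot\ell=\pm1$ form a single orbit as weights of a half-spin representation), and it is consistent with the degrees $15$ and $6$ of $\TAout$ and $\TAin$ over $T_{A_1}$ used later in the paper. The trade-off is exactly the one you anticipate: the paper's computation is elementary and self-contained, needing nothing about the marked cover, whereas your approach presupposes the dictionary between the $36$ components of $T_{A_1,m}$, the mirror hyperplanes of $W(E_6)$ in $\calB_4$, and the vanishing-cycle description of which lines specialize through the node. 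Those facts are standard (Naruki, Allcock--Carlson--Toledo) and the $36$-component statement is quoted in the paper's introduction, but a complete write-up would have to cite or establish them; in exchange you get additional structural information (the $20+16$ decomposition upstairs and the transitivity of $W(D_5)$ on each block) that the paper's local computation does not provide.
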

\begin{proof}
We start with a generic nodal surface, i.e. with a surface $S$ with a unique node $P_0$. By changing coordinates on $\PP^3$, we move the node of the cubic to be the point $P_0=(1:0:0:0)$. Then the cubic must have an equation
$$
G=x_0Q(x_1,x_2,x_3) + F_3(x_1,x_2,x_3)=0\,.
$$
The space of cubics with a node at $P_0$ is a $16$-dimensional linear subspace in the $20$-dimen\-sio\-nal space $H^0(\PP^3,\calO_{\PP^3}(3))$ of all cubics. We first assume that the line $\ell$ does not contain $P_0$, in which case we can assume that
$$
\ell=\lbrace x_0=x_1=0\rbrace\,.
$$
The condition $\ell \subset S$ then imposes another 4 conditions on the equation of~$S$, namely that the coefficients of $x_2^3, x_2^2x_3, x_2x_3^2, x_3^3$ vanish, giving rise to a $12$-dimensional subspace $V_1$. This is acted upon by the group $G_1 \subset \GL(4,\CC)$ which fixes $P_0$ and the line $\ell$ (as a set). Thus $\dim G_1=9$, and $\PP(V_1) \gquot S_1$ is a $3$-dimensional irreducible variety, where $S_1\coloneqq G_1 \cap \SL(4,\CC)$.

The case where $P \in \ell$ is treated in the same way. Here we can assume that the line is
$$
\ell=\lbrace x_2=x_3=0\rbrace\,.
$$
This leads to a $14$-dimensional linear space $V_2$ and an $11$-dimensional group $G_2$ acting on it, so that the quotient is again irreducible 3-dimensional.

The claim for $T_{A_1,\ell}$ now follows immediately, as we showed each component is irreducible.
\end{proof}
As discussed in \eqref{equ:inverseimageellcubicnode}, we will denote by $\TAin$ the closure of the set of pairs $(S,\ell)$, where $S$ is a cubic surface with one node $P$, which is in the chosen line $\ell$: $P\in\ell$, and denote $\TAout$ the closure of the locus of nodal cubics $S$ with a line $P\notin\ell \subset S$ not containing the node.

We will need to compute the pullback of the divisor $T_{A_1}$ to $\BBGell$, for the determination of the nef cones.
\begin{lem}\label{lem:AB}
The pullback of the divisor $T_{A_1}$ under $\ell$ is
\begin{equation}\label{equ:coeffdiscriminantline}
\ell^*T_{A_1}=2\TAin + \TAout \,.
\end{equation}
\end{lem}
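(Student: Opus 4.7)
My plan is to compute the ramification indices of the $27{:}1$ cover $\ell:\oBGell\to\oBG$ along each of the two components of $\ell^{-1}(T_{A_1}) = \TAin \cup \TAout$ identified in \Cref{lem:TA1ell}. Writing $\ell^*T_{A_1} = a\TAin + b\TAout$, with $a,b\ge 1$ the ramification indices, the formula amounts to showing $a=2$ and $b=1$.

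The key geometric input is the classical enumeration of lines on a generic $A_1$-nodal cubic surface $S_0$ with node $P$: $S_0$ contains exactly $21$ distinct lines, of which $6$ pass through $P$ and $15$ avoid $P$. This gives the sheet numbers $\deg(\TAin \to T_{A_1}) = 6$ and $\deg(\TAout \to T_{A_1}) = 15$, and via the degree identity $a\cdot 6 + b\cdot 15 = 27$ knowing either one of the indices determines the other. Refining to the scheme-theoretic level, the relative Fano variety of lines over any smoothing of $S_0$ is flat of degree $27$, and specialises to a $0$-dimensional scheme in which the $15$ lines missing $P$ are reduced while each of the $6$ lines through $P$ is of length $2$; equivalently, each line through the node is the limit of a pair of distinct lines on a nearby smooth cubic that come together across the vanishing cycle.

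From this refinement the ramification is immediate: at a general point of $\TAout$ the cover $\ell$ is étale, giving $b=1$, while at a general point of $\TAin$ two sheets coalesce, giving $a=2$. The consistency $2\cdot 6 + 1\cdot 15 = 27$ matches $\deg\ell=27$.

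The main nontrivial step is verifying that the scheme-theoretic multiplicity of the Fano scheme at each of the $6$ lines through the node really is $2$, rather than being merely a set-theoretic reduction of a length-one point. I would establish this either by an explicit local computation in a versal smoothing of the $A_1$ singularity $xy-z^2=t$, presenting the incidence scheme of lines on $S_t$ near $\ell_0$ as $\Spec\CC\{t\}[u]/(u^2-t)$, or, more conceptually, via Picard--Lefschetz: the local monodromy around the nodal discriminant acts on the $27$ lines of a nearby smooth cubic by reflection in the vanishing cycle, which swaps the two lines specialising to each line through the node while fixing the $15$ lines disjoint from the vanishing cycle. This swap is exactly the branching of $\TAin$ in the $27$-sheeted cover.
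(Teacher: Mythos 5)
Your proof is correct, and it reaches the same two key numbers the paper uses --- the $6+15$ split of the $21$ lines on a one-nodal cubic and the degree identity $2\cdot 6+1\cdot 15=27$ --- but it inverts which step carries the logical weight. The paper first bounds both coefficients by $A,B\in\{1,2\}$ using the fact that the marked cover $\oBGm\to\oBG$ is branched of order exactly $2$ along the discriminant (Naruki), then argues $B=1$ because the $15$ lines lie in the smooth locus, and finally forces $A=2$ by the degree count; the scheme-theoretic multiplicity $2$ of the Fano scheme at the lines through the node appears there only as a closing consistency remark. You instead establish $a=2$ directly, by exhibiting the local model $\Spec\CC\{t\}[u]/(u^2-t)$ of the relative Fano scheme near a line through the node (equivalently, by the Picard--Lefschetz description of the local monodromy as a reflection with $15$ fixed lines and $6$ transposed pairs), and relegate $2\cdot 6+1\cdot 15=27$ to a consistency check. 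What your route buys is self-containedness: it needs no input from the marked cover or from \cite{naruki}, and it determines both coefficients without the elimination argument. What the paper's route buys is brevity, since the order-$2$ branching of $\oBGm\to\oBG$ was already established and reused elsewhere. One small point to make explicit if you write this up: you should note that near a general point of $T_{A_1}$ the cover $\ell$ is identified with the relative Fano variety of lines of the universal family (a general one-nodal cubic is GIT-stable with trivial automorphisms), and that the local model $u^2=t$ is already normal, so its ramification index really is the ramification index of $\ell$ along $\TAin$.
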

\begin{proof}
Since $\ell: \oBGell\to\oBG$ is a finite map, the pullback of $T_{A_1}$ is given as
$$
\ell^*T_{A_1}=A\TAin + B\TAout 
$$ 
for some integers $A,B >0$. Also, since the map $\ell: \oBGm\to\oBG$ is branched of order $2$ along the discriminant (\cite{naruki}, see also \cite[Thm.~4.6]{Nonisom}), we must have $A,B \in \{1,2\}$. We recall that a general nodal cubic surface contains 21 lines, 6 of which go through the node, while the remaining 15 lie in the smooth locus of the surface, see e.g. \cite[Section 9.2]{dolclassical}. The latter implies that there can be no ramification of order 2 along the divisor $\TAout$, and hence $B=1$. This only leaves the option of $A=2$ to account for the total degree of $\ell$ over $T_{A_1}$ being equal to 27. We remark that this also 
fits with the fact that the 6 lines through the node are counted with multiplicity 2 in the Fano scheme of lines on $S$. 
\end{proof}
\begin{rem}
We denote the preimage of the Eckardt locus by
\begin{equation*}
T_{R,\ell} := \ell^{-1}(T_R).
\end{equation*}
It is clear that this locus has at least two components depending on whether the chosen line goes through the Eckardt point or not. We write this as 
\begin{equation*}\label{equ:componentseckardt}
T_{R,\ell} = \TRin \cup \TRout\,,
\end{equation*}
where $\TRin$ denotes the locus of cubic surfaces with an Eckardt point and the chosen line containing this point, and $\TRout$ means that the chosen line does not contain the Eckardt point. We shall see in the proof of \Cref{pro:comparingclasses} that $\TRin$ is irreducible. Every Eckardt cubic admits an involution fixing a given Eckardt point. For a generic Eckardt cubic, both the Eckardt point and the involution are unique, and there are no further automorphisms. This involution fixes each line through the Eckardt points as a set, but not pointwise, whereas it permutes the 24 lines not going through the Eckardt point pairwise. This can be proved analogously to \cite[Lemma 1.16]{CMMZ}, which deals with Eckardt cubic threefolds. This implies that the map
$\calM_{\rm{sm},\ell}\to \calM_{\rm{sm}}$ sending $(S,\ell)$ to $S$ is not ramified over the component parameterizing pairs $(S,\ell)$ where $\ell$ is a line through the Eckardt point, since $(S,\ell)$ and $S$ both have automorphism group of order $2$. But it is ramified over the locus parameterizing lines $(S,\ell)$ with $\ell$ not through the Eckardt point, because $(S,\ell)$ has trivial automorphism group, but $S$ has automorphism group of order $2$. 
One can also see this ramification via a point count: if one has $(S,\ell)$ with $\ell$ not through the Eckardt point, then the Eckardt involution gives that $(S,\ell)\cong (S,\ell')$, where $\ell'$ is the image of $\ell$ under the Eckardt involution, so that the preimage of a generic point in $T_R$ with $T^{\rm{out}}_{R,\ell}$ consists of $12$ points (rather than $24$).  
We thus obtain that
\begin{equation}\label{equ:pullbackeckardt}
\ell^*(T_{R}) = \TRin + 2\TRout\,. 
\end{equation}
Unlike the corresponding result \Cref{lem:AB} for the pullback of the discriminant divisor, we shall not need this in our proofs. 
\end{rem}

\section{The effective cones}\label{sec:effecitveconeviacurves}
The various moduli spaces of cubic surfaces are closely related to moduli spaces of points on~$\PP^1$. This will be crucial for our calculations, and thus we recall the relevant facts here. 
One of the main results of the paper \cite{GKS} is their Theorem 1.1, which identifies the toroidal compactification of the Deligne-Mostow variety $\rm{DM}(2^5,1^2)$ with the 
compactified Hassett space $\overline\calM_{(1/6+\varepsilon)^2,(1/3+\varepsilon)^5}$ of weighted points on a projective line. For further use we denote this by 
\begin{equation}\label{equ:defninitionH}
\calH\coloneqq \overline\calM_{(1/6+\varepsilon)^2,(1/3+\varepsilon)^5}\,,
\end{equation}
where $\calH$ stands for Hassett's space. We recall that there is a divisorial contraction $\overline\calM_{0,7}\to\calH$. Indeed, Hassett spaces for different weights admit a morphism, if each weight in one is greater or equal than the corresponding weight in the other, and the Deligne-Mumford compactification $\overline\calM_{0,7}=\overline\calM_{(1+\varepsilon)^7}$ 
is itself a Hassett space. Further recall that the Hassett space requires marked points to stay away from the nodes of a stable curve, but allows marked points to come together at a smooth point of a nodal curve as long as the total weight of any merged collection of points is at most 1; the stability condition (we are only interested in genus 0, so all curves are of compact type, and every irreducible component is rational) is to say that on each rational component the total weight of special points is greater than two, where the nodes count with weight 1, and the marked points with the weight prescribed.

The Picard group of $\overline\calM_{0,n}$ was computed by Keel \cite{keel}: it is generated by the classes of the boundary divisors, and Keel determined all the linear relations they satisfy. Using this work, Rulla \cite{rulla} studied the Picard group and cones of effective divisors of quotients $\overline\calM_{0,n}/S_m$, where $S_m,\, m \leq n$ acts on the set of $n$ ordered points. In particular, Rulla showed that the rank of the Picard group $\dim_\QQ\Pic_\QQ(\overline\calM_{0,7}/S_5)=7$, that there is one relation among 
the 8 irreducible components of the boundary of this space, and that the cone of effective divisors $\Eff(\overline\calM_{0,7}/S_5)$ is generated by the irreducible components of the boundary. Following Rulla, we think of points labeled 1 and 2 as 
the ones not permuted by $S_5$, and denote by $D_{12}^i\subset\overline\calM_{0,7}/S_5$ the boundary divisor whose generic point has two irreducible components, first component containing points 1 and 2, and $i-2$ other unlabeled points. 
Similarly, denote $D_{1}^i$ the divisor the generic point of which has two irreducible components, one containing point 1, and $i-1$ other unlabeled points. 
Then the only relation in $\Pic_\QQ(\overline\calM_{0,7}/S_5)$ is given in \cite[Cor.~3.2]{rulla} as
\begin{equation}\label{rulla}
\sum_{i=2}^5 (7-i)(7-i-1)D_{12}^i=\sum_{i=2}^5 (i-1)(7-i-1)D_1^i.
\end{equation}
The coefficients of this relation are crucial for us, and one has to be very careful as to whether such a relation is to be read as a relation between reduced divisors, or in a {\em stacky} sense, where each divisor is weighted by $1/k$ where $k$ is the order of the stabilizer group of a generic point. In fact, Rulla's relation refers to the reduced divisors, whereas we require the stacky version of this relation. 

To be safe, we (re)derive this from Keel's relation, making sure that we write all relations as relations among stacky divisors. In particular, we recall from \cite[Equation (1.5)]{schwarz} that on a quotient $\overline\calM_{0,n}/G$, for $G\subset S_n$, the only boundary divisors whose generic point has a non-trivial automorphism are those parameterizing two-component stable rational curves, where one component contains only two marked points, such that the transposition of these two points is contained in $G$. This amounts to saying that the only divisor on $\overline\calM_{0,7}/S_5$ with a non-trivial stabilizer of a generic point is $D_{12}^5$, while on the quotient $\overline\calM_{0,7}/S_2\times S_5$ this also holds for the image of $D^2_{12}$ (which we will later denote by $D_{\circ\circ}^2$). We now deduce the relations among the (stacky) divisors on these quotients by pushing forward Keel's relation from $\overline\calM_{0,7}$.

Recall that Keel's relation among divisors on $\overline\calM_{0,7}$ is given by
\begin{equation}\label{eq:keel}
 \sum_{1,2\in T\not\ni 3,4}D_T=\sum_{1,3\in T\not\ni 2,4}D_T\,,
\end{equation}
where the sum on each side is taken over all subsets $T\subset \{1,\dots,7\}$. Furthermore, the stabilizer of a generic point of any divisor on $\overline\calM_{0,7}$ is trivial, so 
there are no stacky issues here. We now pushforward this relation under the quotient map $P:\overline\calM_{0,7}\to\overline\calM_{0,7}/S_5$. 
Boundary divisors $D_T$ map under $P$ to boundary divisors, and we need to compute for a given boundary divisor of $\overline\calM_{0,7}/S_5$ how many $D_T$ appearing in \eqref{eq:keel} map to it, and the degree of the map on each component of the preimage.

We start by computing the coefficient of $D_{12}^2$ in the pushforward of \eqref{eq:keel} under $P$. It only appears in the pushforward of the left-hand side, for those $T$ with $\# T=2$. This is to say $T=\{1,2\}$, and then the degree of the map $P:D_T\to D_{12}^2$ is equal to $5!$, corresponding to arbitrarily relabeling the $5$ points on the component of the curve containing points $\{3,4,5,6,7\}$. For $D_{12}^3$, it appears in the pushforward of terms with $T=\{1,2,a\}$ with $a\ne 3,4$. There are of course $3$ such terms, and for each one the degree of the map $P|_{D_T}$ is equal to $4!$, for relabeling the points not in $T$. For $D_{12}^4$, this is the pushforward of all $D_T$ with $T=\{1,2,a,b\}$ with $4<a<b\le 7$. 
Thus there are $3$ such $T$, and the degree of $P$ on each of them is $2$ (for permuting $a$ and $b$) times $3!$ for permuting the other $3$ points. For $D_{12}^5$, the only $D_T$ on the left in \eqref{eq:keel} mapping to it is for $T=\{1,2,5,6,7\}$. The degree of $P|_{D_T}$ is then $3!$ for relabeling the points $5,6,7$. However, there is here no extra factor of~$2$ for permuting the points $3$ and $4$, since the component of the stable rational curve containing points $3,4$ is anyway parameterized by $\calM_{0,3}$, i.e.~is a point --- this is precisely where the stackiness appears.

To compute the pushforward of the right-hand-side of \eqref{eq:keel}, note that the preimage of $D_1^2$ is only the divisor $D_{\{1,3\}}$, the degree of $P$ on which is $4!$ for permuting the points $4,5,6,7$. For $D_1^3$, the preimage consists of those $D_T$ where $T=\{1,3,a\}$ for $a=5,6,7$. So there are 3 such divisors, and the degree is $2$ (for permuting $a$ and $3$) times $3!$ for each of them. By the symmetry of points $1$ and $2$, the coefficients of $D_1^5$ and $D_1^4$ are the same as those of $D_1^2$ and $D_1^3$, respectively. Thus finally the pushforward $P_*$ of \eqref{eq:keel} is given by
$$
 5!\, D_{12}^2+3\cdot 4!\, D_{12}^3+3\cdot2\cdot 3!\, D_{12}^4+3!\, D_{12}^5=4!\,D_1^2+3\cdot2\cdot 3!\,D_1^3+3\cdot2\cdot 3!\,D_1^4+4!\,D_1^5
$$
Dividing all the coefficients by $6$, we obtain
\begin{equation}\label{rullacorrect}
20 D_{12}^2+12 D_{12}^3+6 D_{12}^4+ D_{12}^5=4 D_1^2+6D_1^3+6D_1^4+4D_1^5.
\end{equation}
This agrees with Rulla's formula \eqref{rulla} except we have $1$ in front of $D_{12}^5$ instead of Rulla's $2$, due to taking the stackiness into account.

For ease of notation, we will denote the quotient of the Deligne-Mumford compactification by
\begin{equation}\label{equ:deftildeM}
\tMq\coloneqq \overline\calM_{0,7}/S_2\times S_5.
\end{equation}
Similarly, we denote the corresponding quotient of the Hassett space $\calH=\overline\calM_{(1/6+\varepsilon)^2,(1/3+\varepsilon)^5}$ by 
\begin{equation}\label{equ:deftildeH}
\tcH\coloneqq\calH/S_2\times S_5.
\end{equation}
 
It is crucial for us that the Hassett space is closely related to the moduli space of cubic surfaces with a line. Indeed, the diagram \eqref{relations} of covers of $\calM_{\rm sm}$, can be extended over the discriminant locus and to the toroidal compactifications. This follows from \cite[Theorem 1.4]{GKS}, which gives an isomorphism between the Naruki compactification $\overline \calN$ and $\oBGm$, and then taking quotients. In particular, we then have an isomorphism 
\begin{equation}\label{equ:extendedisophi}
\varphi: \tcH \cong \oBGell
\end{equation}
between the Hassett space and the toroidal compactification of the moduli space $\calM_{\rm sm,\ell}$ of smooth cubics with a line. We further refer the reader to the discussion of this extension given in \cite{DvGK} and \cite{doran}. 

Let $\sigma:\overline\calM_{0,7}/S_5\to \tMq$ be the double cover (denoted $\sigma$ as we are thinking of this as a quotient by the involution), and (anticipating the future weights in $\calH$) we call the first two points (labeled 1 and 2 by Rulla) {\em light}, and mark them as $\circ$ on the sketches of our curves below, and the remaining 5 permuted points we call {\em heavy}, and mark them by $\bullet$.

We now compute the pushforward of \eqref{rullacorrect} under $\sigma_*$, similarly to how we dealt with $P_*$ above. We denote by $D_{\circ\circ}^i$ the image of $D_{12}^i$ under $\sigma$, and note that $\sigma^{-1}(D_{\circ\circ}^i)=D_{12}^i$ and that $\sigma|_{D_{12}^i}$ has degree two for permuting the points $1$ and $2$, {\em except for $i=2$}, where the degree is equal to 1, as the 
irreducible component containing the node and points $1$ and $2$ is already parameterized by $\calM_{0,3}$. Similarly, we note that $D_\circ^i\coloneqq\sigma(D_1^i)=\sigma(D_1^{7-i})$, and both of these divisors are mapped generically one-to-one to their image under~$\sigma$.

Then the pushforward of \eqref{rullacorrect} under $\sigma$ gives the relation
\begin{equation}\label{sigmarulla}
20 D_{\circ\circ}^2+24 D_{\circ\circ}^3+12D_{\circ\circ}^4+2D_{\circ\circ}^5=8D_\circ^2+12D_\circ^3\in \Pic_\QQ(\tMq)\,.
\end{equation}
As the $6$ divisors appearing in this formula are images of all generators of $\Pic_\QQ(\overline\calM_{0,7}/S_5)$, it follows that
$$
 \dim_\QQ\Pic_\QQ(\tMq)=\dim_\QQ\Pic_\QQ(\overline\calM_{0,7}/S_5)-2=5\,,
 $$
and that this Picard group is generated by divisors $D_{\circ\circ}^2,D_{\circ\circ}^3,D_{\circ\circ}^4,D_{\circ\circ}^5,D_\circ^2,D_\circ^3$, subject to the one relation \eqref{sigmarulla}. (We note that the above can also be computed using \texttt{admcycles} \cite{admcycles}, provided one carefully tracks the generic automorphisms for the correct stacky coefficients.)

We now describe the divisorial contraction $h:\tMq\to \tcH$. We note that the product of symmetric groups still acts on the Hassett space, as the corresponding weights are equal. Thus, we simply need to determine which boundary divisors of $\tMq$ are contracted by the morphism~$h$, which is the question of stability of the generic point of each boundary divisor, with the chosen weights. Indeed, on $D_{\circ\circ}^i$ the weight of the component containing 1,2 is equal to
$$
 2\cdot \frac16+(i-2)\cdot \frac13+1+i\cdot\varepsilon=\frac{i+2}{3}+i\cdot\varepsilon
$$
(where the extra $+1$ is for the node). Thus for $i=2,3$ this component is unstable, while --- since the sum of the weights of all the points is $2+7\varepsilon$ --- for $i=5$ the other component is unstable. 
In all of these cases the unstable rational component will be contracted. If we contract a component (of course attached to the rest of the curve at one node), and if that component 
contains $j\ge 3$ marked points, the dimension of the stratum in the moduli space decreases, as we are mapping $\calM_{0,j+1}$ to a point. Thus $h$ contracts the divisor $D_{\circ\circ}^3$ to a codimension 
two locus. However, the divisors $D_{\circ\circ}^2$ and $D_{\circ\circ}^5$ are mapped under $h$ to divisors in $\tcH$, where we simply think that a generic point represents now an irreducible rational point 
where two light points (resp.~two heavy points) have collided. Furthermore, the generic point of $D_{\circ\circ}^4$ is stable for our choice of the weights, and thus there is no contraction there, either. 
For brevity we denote $\delta_i\coloneqq h(D_{\circ\circ}^i)$.

For a generic point of $D_\circ^2$, the total weight of the points on the component containing only one heavy~$\bullet$ point (and one light~$\circ$ point) is $\tfrac16+\tfrac13+1+2\varepsilon<2$, so this component of the rational nodal curve gets contracted, but again this means we replace $\calM_{0,3}$ by a point, and simply think of the corresponding point in the Hassett space as an irreducible rational curve where a light and a heavy point have collided. Finally, the total weight of the component containing two heavy $\bullet$ points and one light $\circ$ point, at a generic point of $D_\circ^3$ is equal to $\tfrac56+1+3\varepsilon<2$, so $h$ contracts the divisor $D_\circ^3$. (The interested reader will observe 
how our description parallels conditions (i)-(iii) described in \cite[p.~113]{DvGK}). We denote $\gamma\coloneqq h(D_\circ^2)$. For further use, we give the schematic pictures of the generic points of all boundary 
divisors in $\tMq$ and their images in $\tcH$.

\begin{figure}[ht]
\centering\input{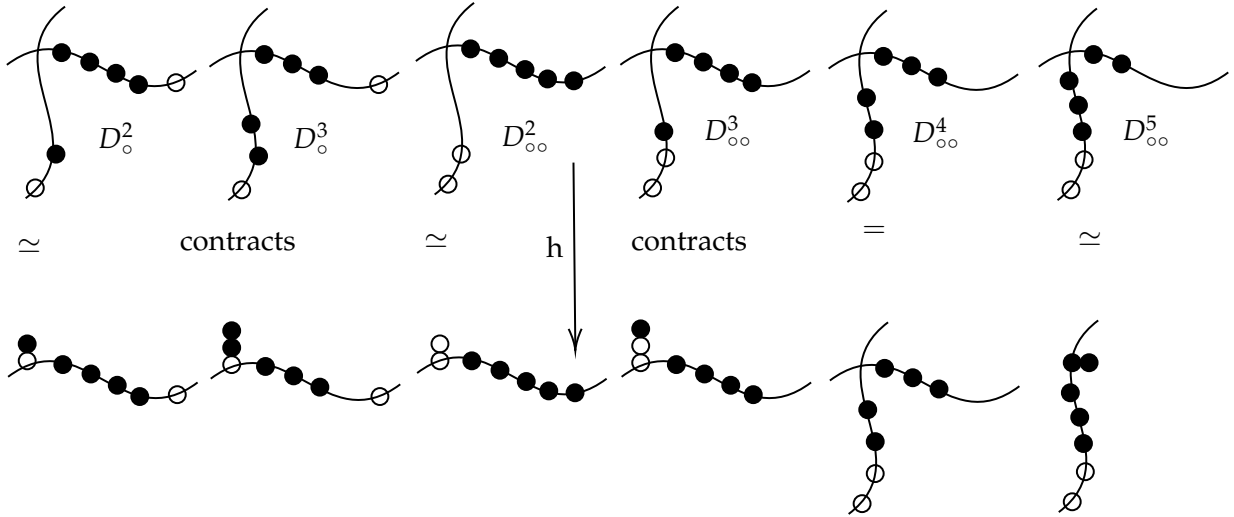}
\caption{The images of boundary divisors under the map $h$}\label{fig:divisors}
\end{figure}

We can summarize the above discussion as follows.
\begin{teo}\label{teo:PicardtildeH}
The Picard group of $\tcH$ is given by
\begin{equation}\label{equ:generatorsPicH}
 \Pic_\QQ(\tcH)=\QQ \delta_2\oplus\QQ\delta_4\oplus\QQ\delta_5\oplus\QQ\gamma/\sim
\end{equation}
where the unique relation~$\sim$ is
\begin{equation}\label{equ:relationsPicH}
 20 \delta_2+12\delta_4+2\delta_5=8\gamma\in \Pic_\QQ(\tcH).
\end{equation}
\end{teo}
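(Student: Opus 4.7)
The plan is to deduce the theorem directly from the presentation of $\Pic_\QQ(\tMq)$ established in the discussion just above, together with the classification of which boundary divisors are contracted by the morphism $h:\tMq\to\tcH$. The underlying general fact is that for a divisorial birational contraction between $\QQ$-factorial DM stacks, the pushforward $h_*:\Pic_\QQ(\tMq)\to\Pic_\QQ(\tcH)$ is surjective with kernel spanned exactly by the $h$-exceptional divisors. Both $\tMq$ and $\tcH$ are $\QQ$-factorial, being quotients of smooth varieties by finite groups, so this machinery applies.

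I would first record from the stability analysis carried out just before the theorem that the $h$-exceptional divisors are precisely $D_{\circ\circ}^3$ and $D_\circ^3$ (each contracted to a codimension-two stratum when the corresponding rational tail becomes unstable for the Hassett weights and is collapsed to a point), while the remaining four boundary divisors $D_{\circ\circ}^2, D_{\circ\circ}^4, D_{\circ\circ}^5, D_\circ^2$ map to honest boundary divisors, namely $\delta_2,\delta_4,\delta_5,\gamma$. Applying $h_*$ to the presentation of $\Pic_\QQ(\tMq)$ as six generators modulo the single relation \eqref{sigmarulla}, and setting $D_{\circ\circ}^3 = D_\circ^3 = 0$, immediately produces $\Pic_\QQ(\tcH)$ as generated by $\delta_2,\delta_4,\delta_5,\gamma$ subject to
\begin{equation*}
20\delta_2+12\delta_4+2\delta_5=8\gamma,
\end{equation*}
which is exactly \eqref{equ:relationsPicH}. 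A quick rank check $5-2=3=4-1$ confirms consistency.

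The only point requiring a word of justification is that $D_{\circ\circ}^3$ and $D_\circ^3$ are genuinely linearly independent in $\Pic_\QQ(\tMq)$, so that quotienting by them drops the rank by the full expected amount; but this is immediate from inspection of the unique relation \eqref{sigmarulla}, whose non-zero coefficients involve classes other than just these two. Beyond this bookkeeping there is no substantive obstacle: the real content has already been carried out in the explicit determination of $\Pic_\QQ(\tMq)$ via the pushforward of Keel's relations and in the stability analysis identifying the exceptional locus of $h$, so the proof is essentially a matter of assembling these ingredients.
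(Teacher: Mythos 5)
Your proposal is correct and follows essentially the same route as the paper: push the relation \eqref{sigmarulla} forward under $h$, observe that the contracted divisors $D_{\circ\circ}^3$ and $D_\circ^3$ die (while the remaining boundary divisors map birationally onto $\delta_2,\delta_4,\delta_5,\gamma$), and conclude from the surjectivity of $h_*$ and the rank count $5-2=4-1=3$. The only difference is cosmetic: you make explicit the general fact that the kernel of $h_*$ for a birational morphism of $\QQ$-factorial spaces is spanned by the exceptional divisors, and verify their linear independence, which the paper leaves implicit in ``the rest follows from the discussion preceding the theorem.''
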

\begin{proof}
The relation is obtained by pushing \eqref{sigmarulla} under $h$, resulting in \eqref{equ:relationsPicH}. Here we use that all other classes of divisors are contracted under~$h$. The rest follows from the 
discussion preceding the theorem.
\end{proof}
Here we note that crucially there is only one term on the right-hand-side, i.e. one boundary divisor on $\tcH$ can be expressed as a {\em positive} linear combination of other divisorial boundary strata.

It is now not difficult to determine the effective cone of $\tcH$.
\begin{teo}\label{teo:effconetildeH}
The cone of effective divisors of $\tcH$ is
\begin{equation}\label{equ:effconetildeH}
 \Eff(\tcH)=\RR_{\ge 0}\delta_2+\RR_{\ge 0}\delta_4+\RR_{\ge 0}\delta_5\,.
\end{equation}
\end{teo}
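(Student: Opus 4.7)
The plan is to descend the description of the effective cone from $\overline\calM_{0,7}/S_5$ through the chain $\overline\calM_{0,7}/S_5 \xrightarrow{\sigma} \tMq \xrightarrow{h} \tcH$ introduced earlier in this section. First, the inclusion $\RR_{\ge 0}\delta_2+\RR_{\ge 0}\delta_4+\RR_{\ge 0}\delta_5\subseteq \Eff(\tcH)$ is immediate, since each $\delta_i$ is the class of an irreducible effective boundary divisor $h(D_{\circ\circ}^i)$.

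For the reverse inclusion, I would invoke Rulla's theorem, recalled in the discussion preceding \eqref{rulla}: the cone $\Eff(\overline\calM_{0,7}/S_5)$ is generated by the eight boundary divisors $D_{12}^i$ and $D_1^i$. The key technical input is the standard fact that for a surjective morphism $f\colon X\to Y$ between $\QQ$-factorial normal projective varieties which is either finite or birational, pushforward satisfies $\Eff(Y)=f_*\Eff(X)$: for the finite case this follows from the identity $f_*f^*=\deg f$ applied to an arbitrary effective class on $Y$, and for the birational case from taking strict transforms. Applying this successively to $\sigma$ and $h$ yields $\Eff(\tcH)=(h\circ\sigma)_*\Eff(\overline\calM_{0,7}/S_5)$, which is therefore generated by the pushforwards of the eight boundary classes.

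By the analysis summarized in \Cref{fig:divisors}, $h$ contracts $D_{\circ\circ}^3$ and $D_\circ^3$ to loci of codimension at least two, so these two classes push to zero; the remaining six boundary divisors push, up to positive scalars, to $\delta_2,\delta_4,\delta_5$ (from $D_{\circ\circ}^i$, $i=2,4,5$) and to $\gamma$ (from $D_\circ^2$). Hence
\[
 \Eff(\tcH) = \RR_{\ge 0}\delta_2+\RR_{\ge 0}\delta_4+\RR_{\ge 0}\delta_5+\RR_{\ge 0}\gamma.
\]
The unique linear relation \eqref{equ:relationsPicH} rearranges to $\gamma=\tfrac{5}{2}\delta_2+\tfrac{3}{2}\delta_4+\tfrac{1}{4}\delta_5$, expressing $\gamma$ as a nonnegative combination of the other three generators; thus $\gamma$ is redundant, and $\Eff(\tcH)$ reduces to the cone asserted.

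The main thing to be careful about is the pushforward identity $\Eff(Y)=f_*\Eff(X)$ in our (mildly stacky) setting, and the corresponding tracking of positive multiplicities through the finite quotient $\sigma$ (analogous to the bookkeeping that led from \eqref{rullacorrect} to \eqref{sigmarulla}). Since the conclusion depends only on the rays spanned by the pushforwards of boundary classes, and none of the surviving classes vanish, the precise positive scalars do not affect the final answer.
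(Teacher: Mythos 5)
Your proposal is correct and follows essentially the same route as the paper: pull back to $\overline\calM_{0,7}/S_5$, invoke Rulla's description of its effective cone, push forward under $h\circ\sigma$ to land in the cone spanned by $\delta_2,\delta_4,\delta_5,\gamma$, and then eliminate $\gamma$ via the relation \eqref{equ:relationsPicH}, which expresses it as a nonnegative combination of the other three. The only cosmetic difference is that you package the pull--push step as the general fact $\Eff(Y)=f_*\Eff(X)$ for finite or birational surjections, which the paper uses implicitly.
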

\begin{proof}
Let $E\in \Eff(\tcH)$ and denote its preimage in $\overline\calM_{0,7}/S_5$ by $\tilde E$. By \cite{rulla}, the class $\tilde E$, being that of an effective divisor, must then be equal to some linear combination of $D_\circ^i$ and $D_{\circ\circ}^i$ with non-negative coefficients. Pushing such an expression for $\tilde E$ under the map $h\circ \sigma$ 
expresses the class of $E$ as a non-negative linear combination of the classes $h_*\circ \sigma_*(D_\circ^i)$ and $h_*\circ \sigma_*(D_{\circ\circ}^i)$, which, by the above discussion is 
thus a linear combination of $\delta_2,\delta_4,\delta_5,\gamma$ with non-negative coefficients. Using \eqref{equ:relationsPicH} we can eliminate $\gamma$ from such an expression.
\end{proof}

Since $\tcH \cong \oBGell$, we have thus determined the cone of effective divisors of the toroidal compactification of the moduli space of cubic surfaces with a line. In order to make this geometrically meaningful,
we now explore the meaning of the divisors $\delta_2,\delta_4,\delta_5$ in terms of the moduli of cubic surfaces. For this we use the geometric discussions in \cite[Sec.~3]{DvGK} and \cite{doran}.
\begin{pro}\label{pro:comparingclasses}
Under the isomorphism $\varphi: \tcH \cong \oBGell$ from (\ref{equ:extendedisophi}) we have
\begin{equation}\label{equ:identificationclasses}
\varphi(\delta_2)= \TAin, \quad \varphi(\delta_4)= T_{3A_2,\ell},\quad \varphi(\delta_5)= \TAout, \quad \varphi(\gamma)=\TRin.
\end{equation}
\end{pro}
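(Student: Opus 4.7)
My strategy is to identify each of the four divisors $\delta_2,\delta_4,\delta_5,\gamma$ of $\tcH$ with a specific divisor of $\oBGell$ by tracing through the Deligne--Mostow construction of \cite[Sec.~3]{DvGK} and its extension to the compactifications from \cite[Sec.~3]{doran} and \cite[Thm.~1.4]{GKS}. The key geometric input is the conic bundle model recalled in the second remark above: for a pair $(S,\ell)$, the pencil of planes through $\ell$ exhibits the blow-up $\tilde S$ as a conic bundle $\tilde S\to \PP^1$ with $5$ singular fibers (yielding the $5$ heavy $\bullet$ points on $\PP^1$) together with a natural $2$-section branched at $2$ points (yielding the $2$ light $\circ$ points). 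Since $\varphi$ is an isomorphism of compactifications, it sends boundary to boundary and interior special divisors to interior special divisors, so it suffices to determine what kind of degeneration of $(S,\ell)$ each of $\delta_2,\delta_4,\delta_5,\gamma$ parametrizes.

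First, I would match $\delta_5$ and $\delta_2$ with the two components $\TAout$ and $\TAin$ of the discriminant $T_{A_1,\ell}$. The collision of two heavy points ($\delta_5$) means two singular fibers of the conic bundle merge. Each singular fiber is a pair of lines meeting at a point located strictly inside the fiber, hence disjoint from the section $\ell$; the merger produces a node of $S$ at this intersection point, so $P\notin\ell$ and $\varphi(\delta_5)=\TAout$. Analogously, the collision of two light points ($\delta_2$) degenerates the branch locus of the $2$-section, whose position is intrinsically tied to $\ell$; the resulting nodal cubic has its node on $\ell$, yielding $\varphi(\delta_2)=\TAin$.

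Since the boundary of $\oBGell$ consists only of $\TAin$, $\TAout$, and the toric boundary $T_{3A_2,\ell}$, the remaining boundary divisor $\delta_4$ is forced to satisfy $\varphi(\delta_4)=T_{3A_2,\ell}$. This is also consistent with the geometric picture: $\delta_4$ parametrizes the most degenerate reducible stable curve with one component carrying the two light and two heavy points and the other carrying three heavy points, mirroring the unique cusp in $\BBG$ that corresponds to the $3A_2$ cubic.

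Finally, for $\gamma$ (an interior divisor of $\tcH$), the coincidence of a light and a heavy point is the coincidence of a branch point of the $2$-section with a singular fiber of the conic bundle. Via the Eckardt involution, which fixes each line through an Eckardt point as a set, this coincidence corresponds precisely to the existence of an Eckardt point on $\ell$, in analogy with the cubic threefold setting of \cite[Lem.~1.16]{CMMZ}. Since $\gamma$ is manifestly irreducible --- being the image under $h\circ\sigma$ of the single divisor $D_\circ^2$ --- this identification simultaneously establishes $\varphi(\gamma)=\TRin$ and the irreducibility of $\TRin$. The main obstacle is the rigorous justification of the geometric identifications in the $\delta_2$ vs.\ $\delta_5$ case and the Eckardt interpretation for $\gamma$; as an independent consistency check, the multiplicities in the pullback formula $\ell^*T_{A_1}=2\TAin+\TAout$ from \Cref{lem:AB} agree with the symmetry structure of the colliding configurations in $\tcH$.
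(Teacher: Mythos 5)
Your identifications are all correct, and your overall strategy --- interpret each collision divisor of $\tcH$ geometrically and match it to a divisor of $\oBGell$, handling $\delta_4$ by elimination since $\varphi$ must send boundary to boundary --- is the same as the paper's. The difference is the geometric model: the paper works with the Dolgachev--van Geemen--Kond\=o picture of six points in $\PP^2$ and the conic $C_i$ through five of them, reading off each collision from the explicit case list in the arXiv version of \cite{DvGK} (their cases (2), (4), (5) give $\gamma$, $\delta_2$, $\delta_5$ respectively), whereas you use the dual conic-bundle model (five singular fibers $=$ heavy points, the $2$-section $\ell$ branched at two points $=$ light points). The paper itself notes the two models are equivalent, so this substitution is legitimate, and it arguably makes the $\gamma$ case cleaner: a light point meeting a heavy point says exactly that a branch point of $\ell\to\PP^1$ lies on a singular fiber, i.e.\ that the two residual lines of a tritangent plane through $\ell$ meet $\ell$ at a single point --- an Eckardt point on $\ell$ --- and this is visibly an if-and-only-if, which yields the surjectivity onto $\TRin$ (and hence its irreducibility) that the paper argues separately by inspecting the lines on $S$.

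That said, two of your steps are asserted rather than proved. For $\varphi(\delta_2)=\TAin$ you say the branch locus is ``intrinsically tied to $\ell$'' so the node lands on $\ell$; that is the conclusion, not an argument. It can be repaired either directly (for a smooth $S$ the degree-$2$ map $\ell\to\PP^1$ has two distinct branch points by Riemann--Hurwitz, so their collision forces $S$ singular with the degeneration concentrated on $\ell$; the paper instead sees the node as the contraction of the line $L_1$ in the degenerate conic $C_i=L_1\cup L_2$, with the marked line the image of $L_2$ through that node), or simply by elimination once $\delta_5\mapsto\TAout$ and $\delta_4\mapsto T_{3A_2,\ell}$ are in place, since $\varphi$ induces a bijection between the divisorial components of the two complements of $\calM_{\rm{sm},\ell}$. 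Similarly, the appeal to the Eckardt involution for $\gamma$ is a red herring: what drives the identification is the coplanarity of the three lines through an Eckardt point (so that the plane they span belongs to the pencil through $\ell$), not the involution. Neither gap is fatal, but both need to be filled for a complete proof.
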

\begin{proof}
In \cite[Sec.~3]{DvGK} the authors investigate the geometry of the collisions of the $5+2$ points and their relations to the nodal cubic surfaces, and we will now deduce all the claims above except for the one about $\delta_4$ (which does not correspond to nodal cubics) from the lists from the arXiv version of \cite{DvGK}\footnote{See I.~Dolgachev, B.~van Geemen, S.~Kond\=o {\em A complex ball uniformization of the moduli space of cubic surfaces via periods of K3 surfaces}, {\tt arXiv:math/03103421v1}, pp 14--16. }, which contains additional helpful information beyond what is covered in the journal version. 

We start with the divisor $\gamma$. This is where one light and one heavy point collide, which corresponds to case (2) in \cite[pp. 14/15]{DvGK}. 
The special situation is that one of the two tangents to $C_i$ through $p_i$ meets the conic in a point $p_k$. This still leads to a smooth cubic. Blowing up in $p_k$ we see that we obtain an Eckardt point, namely 
the point where the exceptional line and the strict transforms of the conic $C_i$ and the tangent at $p_k$ meet. Hence we land in the Eckardt locus $T_{R,\ell}$. The marked line on the cubic surface $S$ is the strict transform of $C_i$ and goes through the Eckardt point. Hence $\varphi$ maps $\gamma$ to $\TRin$. 
One gets the generic point of $\TRin$ in this way.
Indeed, we can assume that the marked line 
corresponds to $C_i$. By inspection of the lines on $S$, or equivalently analysing the conic bundle associated to the pencil of planes containing the marked line $\ell$, we see that the only way in which 
an Eckardt point with $\ell$ an Eckardt line can arise, is, that one of the tangents to $C_i$ through $p_i$ goes through one of the other points $p_j$. Hence we get that $\varphi(\gamma)=\TRin$. 

The divisor $\delta_2$ is where two light points collide. If we denote the binary forms which give the $5+2$ points by $F_5$ and $F_2$, then this means that $F_2$ has a double root (which is generically disjoint from the roots of $F_5$). This is case (4) in \cite[pp. 14/15]{DvGK}. As is explained there, this means geometrically that $C_i$ degenerates into two lines $C_j=L_1 \cup L_2$ with one line, say $L_1$, containing $3$ of the points $p_j$ and the other line $L_2$ containing the remaining $2$ of the points $p_j$. Then $L_1$ is contracted to give a node on the cubic $S$. The sixth blown up point $p_i$ lies outside $L_1 \cup L_2$ and the tangent to the conic is the line joining $p_i$ and the intersection of $L_1$ and $L_2$. The marked line is again the image of the conic $C_i$ which, in this case, is mapped to the image of the line $L_2$ which goes through the node. Hence $\varphi$ maps $\delta_2$ to $\TAin$ and since this locus is irreducible we obtain that $\varphi(\delta_2)=\TAin$.

We now consider $\delta_5$. Here 2 heavy points, say $p_j,p_k$ collide and this is (5) in \cite{DvGK}. Then the lines $L_j, L_k$ coincide and $p_i,p_j,p_k$ lie on a line which gets contracted to a node. The marked line is the strict transform of the conic $C_i$ and this does not contain the node, thus giving us a point in $\TAout$. Again by irreducibility of $\TAout$ it follows that $\varphi(\delta_5)=\TAout$.

Finally, we turn to the divisor~$\delta_4$, which does not correspond to nodal cubics (i.e.~those with only~$A_1$ singularities). For this we observe that the isomorphism  $\varphi: \tcH \cong \oBGell$ sends the boundary to the boundary. In particular, there must be a boundary divisor in $\tcH$ that maps to the toroidal boundary $T_{3A_2,\ell}$. By the above discussion this can only be the irreducible boundary divisor $\delta_4$ of $\tcH$.
This further implies that $T_{3A_2,\ell}$ is irreducible. The latter can also be seen purely group theoretically and amounts to the statement that the Weyl group $W(D_5)\cong (\ZZ/2\ZZ)^4 \rtimes S_5$ 
acts transitively on the 40 cusps of $\oBGm$. This follows immediately from comparing the orders of possible stabilizers of a cusp in $W(D_5)$ with the stabilizers of $W(E_6)$,
which, by Naruki \cite[p. 22]{naruki}, are an $S_3$-extension of $S_3^3$ and thus have order $6^4$.     
\end{proof}
\begin{rem}\label{rem:numericalcheck}
There is a non-trivial numerical check of our results. Identifying the classes $\delta_2$ and $\delta_5$ with their corresponding divisors in $\oBGell$ and pushing the relation \eqref{equ:relationsPicH} under $\ell_*$, we obtain
\begin{equation}\label{equ:pushforwardclassesell}
 \ell_*(\delta_2)=\ell_*(\TAin)=6 T_{A_1}, \quad \ell_*(\delta_5)=\ell_*(\TAout)=15 T_{A_1}.
\end{equation}
The factor 6 comes form the fact that $\TAin \to T_{A_1}$ has degree 6 (there are 6 lines through a node) and that $\TAout\to T_{A_1}$ has degree 15 (there are 15 lines in the smooth locus of a generic nodal cubic surface), as discussed in the proof of \Cref{lem:AB}. Since the map $\ell$ is unramified along the boundary and has degree 27, we find that 
\begin{equation}\label{equ:pushforwardclassdelta4}
\ell_*(\delta_4)=\ell_*(T_{3A_2,\ell})= 27 T_{3A_2}\,.
\end{equation}
Finally, since there are 3 lines through an Eckardt point we have \begin{equation}\label{equ:pushforwardclassgamma}
\ell_*(\gamma)=\ell_*(\TRin)= 3T_R\,.
\end{equation}
Using these formulae we obtain
\begin{equation}\label{equ:pushforwardrel1}
\ell_*(20\delta_2+12\delta_4+2\delta_5)=(20\cdot 6+2\cdot 15) T_{A_1}+ 27 \cdot12 T_{3A_2}=150T_{A_1}+324 T_{3A_2}\,.
\end{equation}
Similarly 
\begin{equation}
\ell_*(8\gamma)= 24 T_R\,,
\end{equation}
and hence \eqref{equ:relationsPicH} becomes 
\begin{equation}\label{equ:pushforwardrel2}
150T_{A_1}+324 T_{3A_2}= 24 T_R\,,
\end{equation}
which is exactly \eqref{eq:TR}.
\end{rem}
The cover $\ell:\oBGell\to\oBG$ is {\em not} Galois, so a priori it could be quite difficult to use this cover to determine $\Eff(\oBG)$. In this case we are, however, in a fortunate situation, as we shall explain now.

\begin{proof}[Proof of \Cref{teo:effandnef} determining $\Eff(\oBG)$]
As discussed in the introduction, $T_{3A_2}$ is an extremal ray of the cone $\Eff(\oBG)$, as it is contracted under the map to the Baily-Borel compactification. Thus it suffices to prove that $T_{A_1}$ is an extremal effective divisor. Suppose for contradiction that the effective divisor $T_{A_1}\in\Eff(\oBG)$ were not extremal, so that its class could be written as $T_{A_1}=E_1+E_2$. We know from \Cref{lem:AB} that $\ell^*T_{A_1}=2\TAin + \TAout$. From \Cref{teo:PicardtildeH} we know that $\delta_2,\delta_4,\delta_5$ are a $\QQ$-basis of the Picard group of $\oBGell$, and generate the cone of effective divisors 
(again, since the other potential generator $\gamma$ turns out to be their linear combination with positive coefficients). But since the preimage of $T_{3A_2}$ is the irreducible divisor $\delta_4$, pushing and pulling these classes is linearly independent from $T_{A_1}$, and $\delta_2,\delta_5$. Thus $\ell^*(E_1)$ and $\ell^*(E_2)$ must be non-negative linear combinations of $\delta_2$ and $\delta_5$ only, not including $\delta_4$. Pushing forward under $\ell_*$ shows that the $E_i$ are then positive multiples of the class of $T_{A_1}$, and this completes the proof. 
\end{proof}

\section{The nef cones}\label{sec:nefcones}
In this section we determine the cone of nef divisors $\Nef(\oBGell)$ and use it to also deduce $\Nef(\oBG)$. Similarly to the determination in the previous section, this will use the identification from \cite{GKS} of $\oBGell$ with the Hassett moduli space $\tcH$, which is obtained from $\tMq$ by a divisorial contraction described above. Recall that Keel and McKernan \cite{keelmckernan} showed that the cone of effective {\em curves} $\Eff_1(\overline\calM_{0,7})$ is generated by the 1-dimensional boundary strata, which implies that this also holds for the cone of effective curves on the quotient $\Eff_1(\tMq)$. By definition the cone of nef divisors is dual to the cone of effective curves, which is simply to say that a divisor is nef if and only if its intersection with every effective curve is non-negative.

Using \texttt{admcycles} \cite{admcycles}, Johannes Schmitt has kindly enumerated for us the 1-dimensional boundary strata in $\tMq$. There are 24 of them, of course subject to a host of linear relations. For our purposes we will not need to list these 24 curves (by drawing the corresponding dual graphs), but we note that some of them are contracted under $h:\tMq\to \tcH$: such a 1-dimensional boundary stratum is contracted to a point under $h$ if and only if the generic point is a stable curve that has an irreducible component that is a $\PP^1$ with one node and three marked points, at least one of which is light. By inspection, there are 6 contracted curves among 24, numbered 1,4,6,14,16,23 in the spreadsheets available at \url{http://www.math.stonybrook.edu/~sam/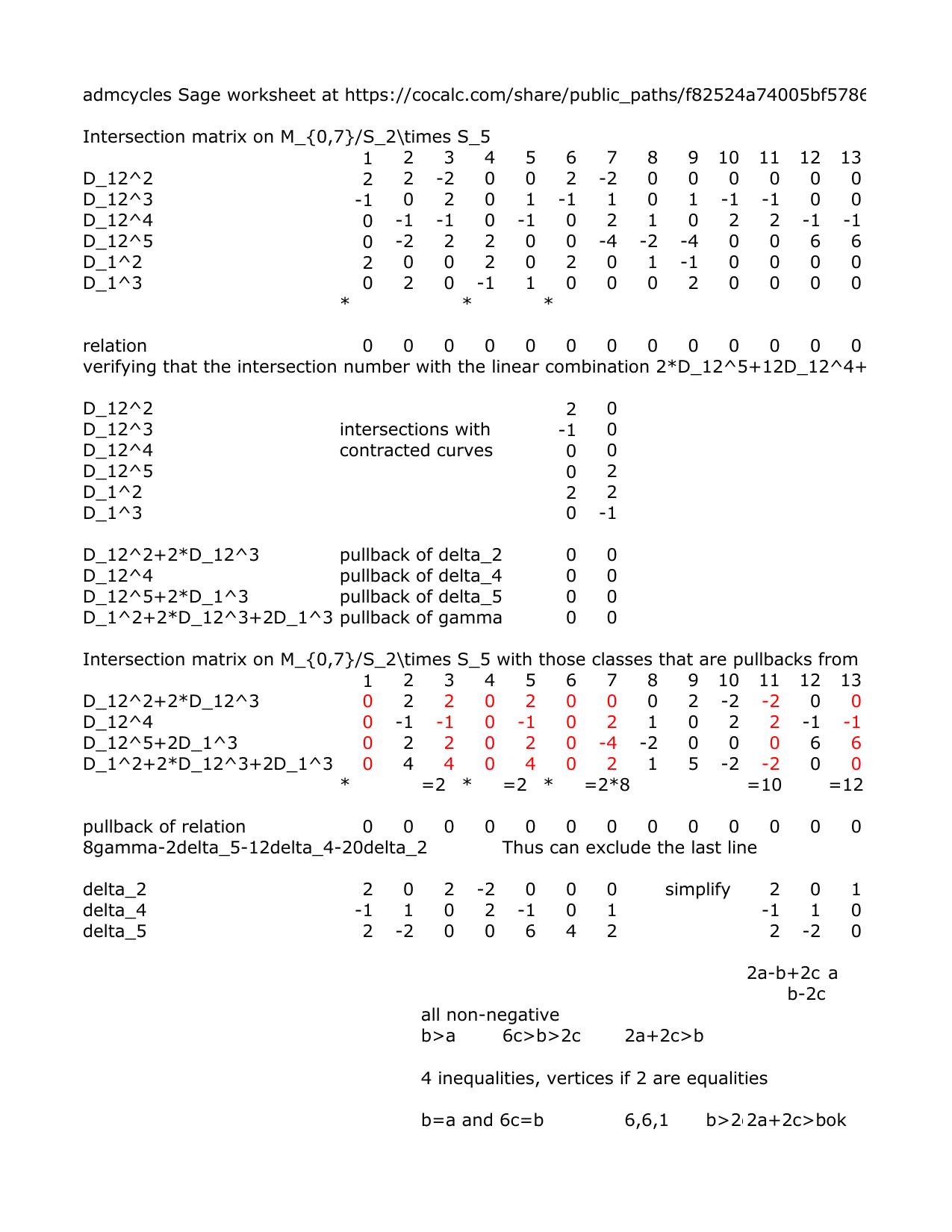}
and \url{http://www.math.stonybrook.edu/~sam/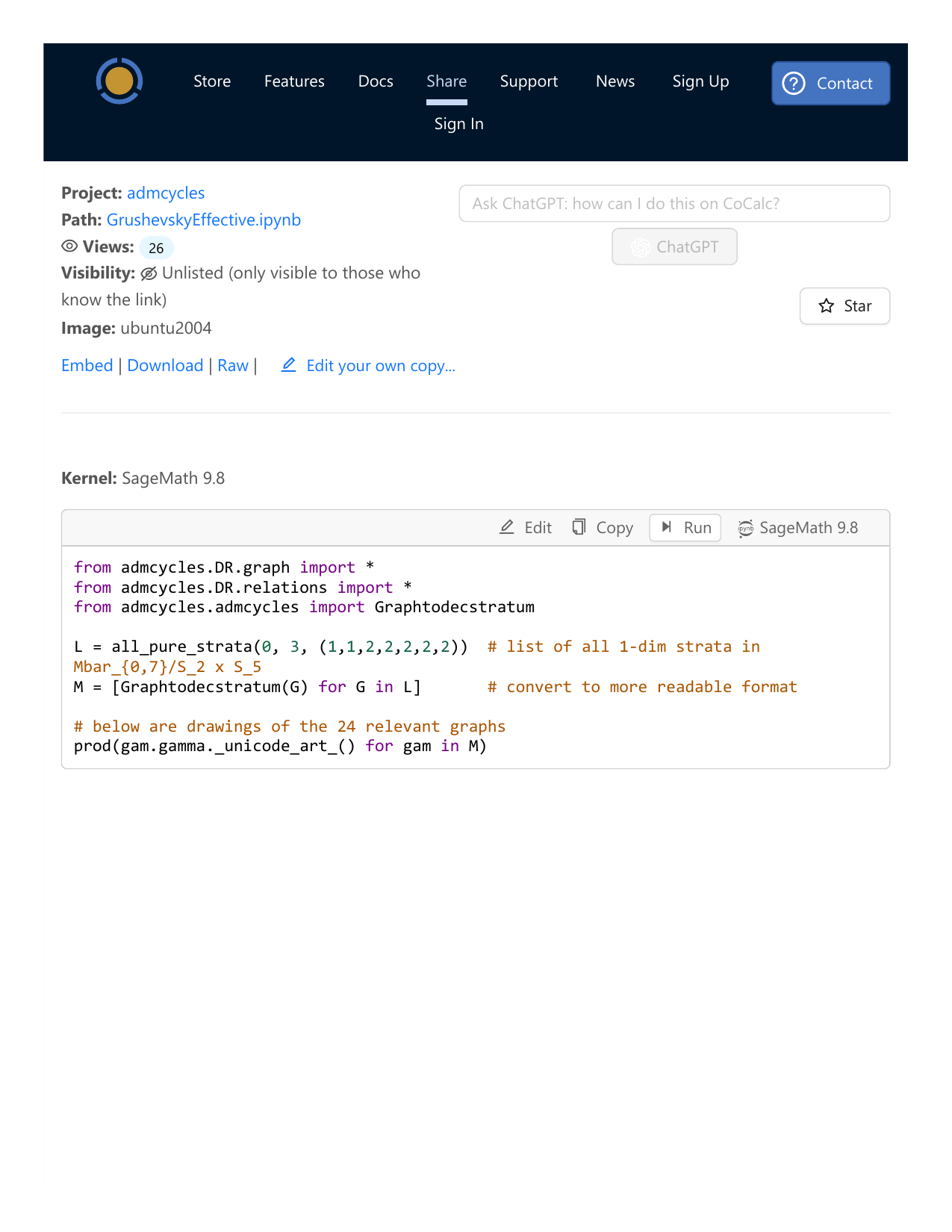} and in the table below (and also appended after the end of the \LaTeX\ document and as additional pdf files uploaded to the arXiv). For example, the 1-dimensional boundary stratum number 1 parameterizes a chain of 4 $\PP^1$'s, where one of the outside $\PP^1$'s contains two light points and one heavy point (and so for stability the $\PP^1$ next to it has to have one heavy point, the next one has to have one heavy point, and the last one has two heavy points). Similarly, the 1-dimensional stratum number 4 parameterizes a chain of 4 $\PP^1$'s where the first one contains one light and two heavy points, the second contains one light point, the third contains one heavy point, and the last contains two heavy points.

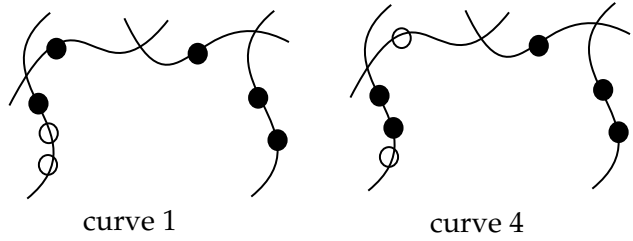
\begin{figure}[ht]
 \centering\tikzset{every picture/.style={line width=0.75pt}} 

\begin{tikzpicture}[x=0.75pt,y=0.75pt,yscale=-1,xscale=1]

\draw    (8.33,50) .. controls (45.33,-22) and (51.33,54) .. (88.33,7) ;
\draw   (23.2,80) .. controls (23.2,77.25) and (25.25,75.02) .. (27.78,75.02) .. controls (30.31,75.02) and (32.36,77.25) .. (32.36,80) .. controls (32.36,82.75) and (30.31,84.98) .. (27.78,84.98) .. controls (25.25,84.98) and (23.2,82.75) .. (23.2,80) -- cycle ;
\draw  [fill={rgb, 255:red, 0; green, 0; blue, 0 }  ,fill opacity=1 ] (18.4,49.08) .. controls (18.4,46.33) and (20.45,44.1) .. (22.98,44.1) .. controls (25.51,44.1) and (27.57,46.33) .. (27.57,49.08) .. controls (27.57,51.83) and (25.51,54.06) .. (22.98,54.06) .. controls (20.45,54.06) and (18.4,51.83) .. (18.4,49.08) -- cycle ;
\draw  [fill={rgb, 255:red, 0; green, 0; blue, 0 }  ,fill opacity=1 ] (129.21,46.14) .. controls (129.21,43.39) and (131.26,41.16) .. (133.79,41.16) .. controls (136.32,41.16) and (138.38,43.39) .. (138.38,46.14) .. controls (138.38,48.89) and (136.32,51.12) .. (133.79,51.12) .. controls (131.26,51.12) and (129.21,48.89) .. (129.21,46.14) -- cycle ;
\draw  [fill={rgb, 255:red, 0; green, 0; blue, 0 }  ,fill opacity=1 ] (138.99,67.5) .. controls (138.99,64.75) and (141.04,62.52) .. (143.57,62.52) .. controls (146.1,62.52) and (148.16,64.75) .. (148.16,67.5) .. controls (148.16,70.25) and (146.1,72.48) .. (143.57,72.48) .. controls (141.04,72.48) and (138.99,70.25) .. (138.99,67.5) -- cycle ;
\draw  [fill={rgb, 255:red, 0; green, 0; blue, 0 }  ,fill opacity=1 ] (98.73,23.86) .. controls (98.73,21.11) and (100.78,18.88) .. (103.31,18.88) .. controls (105.85,18.88) and (107.9,21.11) .. (107.9,23.86) .. controls (107.9,26.61) and (105.85,28.84) .. (103.31,28.84) .. controls (100.78,28.84) and (98.73,26.61) .. (98.73,23.86) -- cycle ;
\draw  [fill={rgb, 255:red, 0; green, 0; blue, 0 }  ,fill opacity=1 ] (27.45,21.33) .. controls (27.45,18.58) and (29.51,16.35) .. (32.04,16.35) .. controls (34.57,16.35) and (36.62,18.58) .. (36.62,21.33) .. controls (36.62,24.08) and (34.57,26.31) .. (32.04,26.31) .. controls (29.51,26.31) and (27.45,24.08) .. (27.45,21.33) -- cycle ;
\draw   (23.58,64.03) .. controls (23.58,61.28) and (25.63,59.05) .. (28.16,59.05) .. controls (30.69,59.05) and (32.75,61.28) .. (32.75,64.03) .. controls (32.75,66.78) and (30.69,69.01) .. (28.16,69.01) .. controls (25.63,69.01) and (23.58,66.78) .. (23.58,64.03) -- cycle ;
\draw    (66.33,6) .. controls (93.33,64) and (102.33,-7) .. (149.33,18) ;
\draw    (130.33,96) .. controls (171.07,62.8) and (101.17,35.41) .. (141.9,2.21) ;
\draw    (17.33,97) .. controls (58.07,63.8) and (-11.83,36.41) .. (28.9,3.21) ;
\draw    (180.33,46) .. controls (217.33,-26) and (223.33,50) .. (260.33,3) ;
\draw   (195.2,76) .. controls (195.2,73.25) and (197.25,71.02) .. (199.78,71.02) .. controls (202.31,71.02) and (204.36,73.25) .. (204.36,76) .. controls (204.36,78.75) and (202.31,80.98) .. (199.78,80.98) .. controls (197.25,80.98) and (195.2,78.75) .. (195.2,76) -- cycle ;
\draw  [fill={rgb, 255:red, 0; green, 0; blue, 0 }  ,fill opacity=1 ] (190.4,45.08) .. controls (190.4,42.33) and (192.45,40.1) .. (194.98,40.1) .. controls (197.51,40.1) and (199.57,42.33) .. (199.57,45.08) .. controls (199.57,47.83) and (197.51,50.06) .. (194.98,50.06) .. controls (192.45,50.06) and (190.4,47.83) .. (190.4,45.08) -- cycle ;
\draw  [fill={rgb, 255:red, 0; green, 0; blue, 0 }  ,fill opacity=1 ] (303.21,42.14) .. controls (303.21,39.39) and (305.26,37.16) .. (307.79,37.16) .. controls (310.32,37.16) and (312.38,39.39) .. (312.38,42.14) .. controls (312.38,44.89) and (310.32,47.12) .. (307.79,47.12) .. controls (305.26,47.12) and (303.21,44.89) .. (303.21,42.14) -- cycle ;
\draw  [fill={rgb, 255:red, 0; green, 0; blue, 0 }  ,fill opacity=1 ] (310.99,63.5) .. controls (310.99,60.75) and (313.04,58.52) .. (315.57,58.52) .. controls (318.1,58.52) and (320.16,60.75) .. (320.16,63.5) .. controls (320.16,66.25) and (318.1,68.48) .. (315.57,68.48) .. controls (313.04,68.48) and (310.99,66.25) .. (310.99,63.5) -- cycle ;
\draw  [fill={rgb, 255:red, 0; green, 0; blue, 0 }  ,fill opacity=1 ] (270.73,19.86) .. controls (270.73,17.11) and (272.78,14.88) .. (275.31,14.88) .. controls (277.85,14.88) and (279.9,17.11) .. (279.9,19.86) .. controls (279.9,22.61) and (277.85,24.84) .. (275.31,24.84) .. controls (272.78,24.84) and (270.73,22.61) .. (270.73,19.86) -- cycle ;
\draw  [fill={rgb, 255:red, 0; green, 0; blue, 0 }  ,fill opacity=1 ] (197.45,61.33) .. controls (197.45,58.58) and (199.51,56.35) .. (202.04,56.35) .. controls (204.57,56.35) and (206.62,58.58) .. (206.62,61.33) .. controls (206.62,64.08) and (204.57,66.31) .. (202.04,66.31) .. controls (199.51,66.31) and (197.45,64.08) .. (197.45,61.33) -- cycle ;
\draw   (201.58,16.03) .. controls (201.58,13.28) and (203.63,11.05) .. (206.16,11.05) .. controls (208.69,11.05) and (210.75,13.28) .. (210.75,16.03) .. controls (210.75,18.78) and (208.69,21.01) .. (206.16,21.01) .. controls (203.63,21.01) and (201.58,18.78) .. (201.58,16.03) -- cycle ;
\draw    (238.33,2) .. controls (265.33,60) and (274.33,-11) .. (321.33,14) ;
\draw    (302.33,92) .. controls (343.07,58.8) and (273.17,31.41) .. (313.9,-1.79) ;
\draw    (189.33,93) .. controls (230.07,59.8) and (160.17,32.41) .. (200.9,-0.79) ;

\draw (44,101) node [anchor=north west][inner sep=0.75pt]   [align=left] {curve 1};
\draw (219,102) node [anchor=north west][inner sep=0.75pt]   [align=left] {curve 4};

\end{tikzpicture}
 \caption{Two 1-dimensional boundary strata in $\tMq$ contracted to points under $h$}\label{fig:contractedcurves}
\end{figure}

Furthermore, Johannes Schmitt used \texttt{admcycles} to compute for us the intersection numbers of these curves with the 6 boundary divisors. 
The \texttt{admcycles} package in fact uses Pixton's original code for intersection computations, and since proper symmetrization has not yet been implemented, it means that each of the boundary divisors used for these intersection computations is in fact some multiple of one of $D_{\circ\circ}^i$ or $D_\circ^i$. However, since the $6$ divisors satisfy a unique linear relation, all of these scaling factors can be determined uniquely up to an overall factor. Rescaling the \texttt{admcycles} numbers (by dividing them by $120,48,24,12,24,12$ respectively for the 6 divisors below, in the order listed), the resulting intersection matrix is as follows:
\begin{equation}\label{intmatrix}
\scalebox{0.75}{$\begin{array}{l|rrrrrrrrrrrrrrrrrrrrrrrr}
\hline
&1&2&3&4&5&6&7&8&9&10&11&12&13&14&15&16&17&18&19&20&21&22&23&24\\
\hline
D_{\circ\circ}^2&2&2&-2&0&0&2&-2&0&0&0&0&0&0&0&0&0&0&0&0&0&0&0&0&0\\
D_{\circ\circ}^3&-1&0&2&0&1&-1&1&0&1&-1&-1&0&0&0&0&0&0&0&0&0&0&0&0&0\\
D_{\circ\circ}^4&0&-1&-1&0&-1&0&2&1&0&2&2&-1&-1&0&0&0&1&1&-1&-1&-1&-1&0&0\\
D_{\circ\circ}^5&0&-2&2&2&0&0&-4&-2&-4&0&0&6&6&2&2&2&-2&-2&6&6&6&6&2&2\\
D_{\circ}^2&2&0&0&2&0&2&0&1&-1&0&0&0&0&2&-1&2&1&-2&0&0&0&0&2&-1\\
D_{\circ}^3&0&2&0&-1&1&0&0&0&2&0&0&0&0&-1&1&-1&0&2&0&0&0&0&-1&1\\
\hline
\end{array}$}
\end{equation}
This intersection matrix determines the nef cone $\Nef(\tMq)$, but we will not need this full information, as we will use the following (probably well known) lemma about nef cones to determine the nef cone of the Hassett space.
For the reader's sake we recall

\begin{lem}\label{lem:pullnef}
Let $X$ and $Y$ be irreducible $\QQ$-factorial varieties and let $f: X\to Y$ be a morphism which is either birational or dominant and finite. Then the following holds:
$$
f^*(\Nef(Y)) = \Nef(X) \cap f^*(\NS(Y).
$$
\end{lem}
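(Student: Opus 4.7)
My plan is to establish the two inclusions separately, the first being formal and the second containing the real content of the statement.

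For the inclusion $f^*(\Nef(Y)) \subseteq \Nef(X) \cap f^*(\NS(Y))$, I would simply observe that pulling back a nef class by a proper morphism yields a nef class. Indeed, if $L\in\Nef(Y)$ and $C\subset X$ is any irreducible curve, then by the projection formula
\[
(f^*L)\cdot C \;=\; L\cdot f_*C,
\]
and $f_*C$ is either $0$ (when $f$ contracts $C$ to a point) or a positive multiple of the irreducible curve $f(C)\subset Y$; in both cases $L\cdot f_*C\geq 0$ by nefness of $L$. The $\QQ$-factoriality hypothesis ensures these intersection numbers are well-defined for $\QQ$-classes by passing to a Cartier multiple.

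The reverse inclusion is the essential point. Given $L\in\NS(Y)$ with $f^*L\in\Nef(X)$, I want to verify $L\cdot C\geq 0$ for every irreducible curve $C\subset Y$. The strategy is to exhibit an irreducible curve $\tilde C\subset X$ together with a positive integer $d$ such that $f_*\tilde C = d\cdot C$, because then the projection formula gives
\[
0\;\leq\;(f^*L)\cdot \tilde C \;=\; L\cdot f_*\tilde C \;=\; d\,(L\cdot C),
\]
and dividing by $d>0$ shows that $L$ is nef.

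It remains to construct such a $\tilde C$, and this is where the hypothesis on $f$ enters. When $f$ is finite and dominant, $f$ is in particular surjective, and for any irreducible curve $C\subset Y$ one may take $\tilde C$ to be any irreducible component of the (nonempty, purely one-dimensional) preimage $f^{-1}(C)$, with $d$ the degree of $f|_{\tilde C}\colon \tilde C\to C$. When $f$ is birational, it is again surjective as a proper birational map of irreducible varieties; if $C$ meets the open locus $V\subset Y$ over which $f$ is an isomorphism, take $\tilde C$ to be the strict transform of $C$; otherwise $C$ is contained in the indeterminacy image, and one picks any irreducible component $Z$ of $f^{-1}(C)$ dominating $C$ (which exists by surjectivity) and, if $\dim Z>1$, cuts $Z$ with general hyperplane sections to reduce to a curve still dominating $C$. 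The only point that needs care is that in each of these constructions $f|_{\tilde C}\colon \tilde C\to C$ is surjective of some positive degree, so that $f_*\tilde C$ is a \emph{positive} multiple of $C$; this is the main (minor) technical obstacle, and it is immediate from how $\tilde C$ is produced.
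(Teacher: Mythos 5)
Your proposal is correct and follows essentially the same route as the paper's proof: the forward inclusion via the projection formula, and the reverse inclusion by producing an irreducible curve $\widetilde C\subset X$ with $f_*\widetilde C$ a positive multiple of $C$ --- taking a preimage component in the finite case, and in the birational case taking the strict transform or, for curves in the exceptional locus, cutting a dominating component of $f^{-1}(C)$ with general hyperplane sections.
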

\begin{proof}
It follows immediately from the projection formula that the pullback of a nef divisor is again nef, which shows the inclusion $f^*(\Nef(Y)) \subset \Nef(X) \cap f^*(\NS(Y))$. To see the converse, we fist assume $f$ to be a birational morphism. Let $D$ be a divisor class on $Y$ such that $f^*(D)$ is nef on $X$. We want to show
that $D$ is nef on $Y$. Let $C$ be an irreducible curve on $Y$. We want to prove that $D.C \geq 0$. This follows from the projection formula
$$
(f^*D).\widetilde C=D. f_*(\widetilde C)
$$
provided one finds a curve $\widetilde C \subset X$ such that $f_*(\widetilde C) =mC$ for some positive multiple $m$. If $C$ is not contained in the exceptional locus, then we can simply take
the strict transform $\widetilde C$ of $C$ in $X$.

If $C$ is contained in the exceptional locus, then choose an irreducible component~$E$ of the preimage $f^{-1}(C)$ that dominates~$C$. By embedding $E$ in some projective space and by taking repeated hyperplane sections, one can construct an algebraic subset $H$ of $E$ which intersects a generic fiber of the restriction $f\mid_E: E \to C$ in a finite number of points. In particular, we can find a component of $H$ which is an irreducible curve $\widetilde C$ such that $f\mid_{\widetilde C}: \widetilde C \to C$ is finite.

The proof for a finite surjective morphism is even easier as one can simply take the full preimage of $C$.
\end{proof}
We will of course want to apply this lemma to $h$ to determine the nef cone of the Hassett space, but doing so requires first determining the pullbacks of divisors under $h^*$.
\begin{lem}\label{lem:divpullbacks}
The pullbacks of the boundary divisors under the map $h$ are as follows:
$$
 h^*\delta_2=D_{\circ\circ}^2+2D_{\circ\circ}^3;\qquad h^*\delta_4=D_{\circ\circ}^4;
 \qquad h^*\delta_5=D_{\circ\circ}^5+2D_\circ^3;\qquad h^*\gamma=D_\circ^2+2D_\circ^3+2D_{\circ\circ}^3\,.
$$
\end{lem}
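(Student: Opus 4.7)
The plan is to exploit that the Hassett contraction $h\colon\tMq\to\tcH$ is birational with exceptional locus exactly $D_{\circ\circ}^3\cup D_\circ^3$, since these are the only boundary divisors of $\tMq$ whose image under $h$ drops dimension (as established in the discussion preceding \Cref{teo:PicardtildeH}). Consequently, for every divisor $\delta$ on $\tcH$ one can write
\begin{equation*}
h^*\delta \;=\; \widetilde\delta \,+\, a\,D_{\circ\circ}^3 \,+\, b\,D_\circ^3,
\end{equation*}
where $\widetilde\delta$ is the strict transform. From Figure~\ref{fig:divisors} the strict transforms are $\widetilde{\delta_2}=D_{\circ\circ}^2$, $\widetilde{\delta_4}=D_{\circ\circ}^4$, $\widetilde{\delta_5}=D_{\circ\circ}^5$ and $\widetilde\gamma=D_\circ^2$; each appears with coefficient $1$ because $h$ is an isomorphism in a neighbourhood of the generic point of each of these divisors. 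The task therefore reduces to determining the pair of integers $(a,b)$ for each of the four divisors.

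To compute $(a,b)$, I would use test curves in $\tMq$ contracted by $h$: by the projection formula $h^*\delta\cdot C = \delta\cdot h_*C = 0$, so each contracted curve yields a linear equation in $(a,b)$. The curves $C_1$ and $C_4$ from the enumeration summarised in \eqref{intmatrix} and depicted in Figure~\ref{fig:contractedcurves} are contracted by $h$: each dual graph is a chain of four $\PP^1$'s in which exactly the two outer components become unstable for the weights $((1/6+\varepsilon)^2,(1/3+\varepsilon)^5)$ (the outer component carrying two light plus one heavy point, and the one carrying two heavy points), while the two interior components have three special points and are therefore rigid; hence after Hassett-contraction the entire chain collapses to a point in $\tcH$. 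Reading the entries in columns 1 and 4 of \eqref{intmatrix}, the system $h^*\delta\cdot C_1=h^*\delta\cdot C_4=0$ becomes $(2-a,\,-b)=(0,0)$ for $\delta_2$, yielding $a=2,b=0$; $(-a,-b)=(0,0)$ for $\delta_4$, yielding $a=b=0$; $(-a,\,2-b)=(0,0)$ for $\delta_5$, yielding $a=0,b=2$; and $(2-a,\,2-b)=(0,0)$ for $\gamma$, yielding $a=b=2$. These are precisely the claimed formulas.

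The main obstacles are twofold. First, one must rigorously confirm that $C_1$ and $C_4$ are actually contracted by $h$, which requires the weight-stability check on each of their components against the Hassett weights. Second, because the \texttt{admcycles} outputs for the quotient stack $\tMq$ must be rescaled to obtain honest intersection numbers of the (stacky) boundary divisors used throughout the paper, one needs the rescalings $(120,48,24,12,24,12)$ to be internally consistent; this is ensured by the fact that the six divisors span a $5$-dimensional $\QQ$-vector space satisfying the single relation \eqref{sigmarulla}, which pins down all the ratios uniquely up to an overall factor. As a final consistency check one may pull \eqref{equ:relationsPicH} back under $h^*$, substitute the computed pullbacks, cancel the exceptional contributions, and verify that Keel's pushforward relation \eqref{sigmarulla} is recovered exactly; this independent check pins down the correctness of the coefficients $2$ appearing in front of $D_{\circ\circ}^3$ and $D_\circ^3$.
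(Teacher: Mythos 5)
Your proposal is correct and follows essentially the same route as the paper: write each pullback as the strict transform plus a combination of the two $h$-exceptional divisors $D_{\circ\circ}^3$ and $D_\circ^3$, then solve for the coefficients by intersecting with the contracted test curves $C_1$ and $C_4$ from \eqref{intmatrix}. The only cosmetic difference is that you allow both exceptional divisors in every ansatz and let the equations force the zero coefficients, whereas the paper first restricts the support of each pullback by identifying which codimension-two images lie in which boundary divisors.
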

We note that of course \eqref{equ:relationsPicH} expresses $\gamma$ in terms of the other divisors, and thus allows us to compute its pullback, but the proof below gives an extra numerical cross-check.
\begin{proof}
In the previous section we determined the images of all boundary divisors under $h$, in particular observing that $h$ contracts $D_{\circ\circ}^3$ and $D_\circ^3$ to codimension two loci, as pictured in \Cref{fig:divisors}. We now check which boundary divisors of $\tcH$ contain these codimension two strata. The image $h(D_\circ^3)$ parameterizes $\PP^1$ with 3 marked heavy points, one marked light point, and one point where one light and two heavy points collided. Such a $\PP^1$ with weighted points is a degeneration of either the $\PP^1$ where a light and a heavy point collided (and the other heavy point is separate) --- which is a generic point of $h(D_\circ^2)$ --- or of a $\PP^1$ where two heavy points collided, but a light point is separate --- which is a generic point of $h(D_{\circ\circ}^5)$. Thus $h(D_\circ^2)\subset h(D_\circ^3),h(D_{\circ\circ}^5)$. Similarly $h(D_{\circ\circ}^3)\subset h(D_{\circ\circ}^2),h(D_\circ^2)$. The pullback of each boundary divisor under $h^*$ is a linear combination of the irreducible components of its preimage, and thus we know which divisors on $\tMq$ enter in each pullback. Since the images of those boundary divisors that are not contracted by~$h$ are all distinct, we know that they appear in the pullbacks of their images with coefficient one, and it remains to determine the coefficients with which the contracted divisors appear.

While of course it is possible to determine the multiplicities by local computations, there is an easier way. Indeed, recall that the map $h$ contracts six 1-dimensional boundary strata. By inspection of \eqref{intmatrix}, we see that the contracted curves number 1 and 6, and respectively the curves number 4,14,16,23 are numerically equivalent. Each curve contracted under~$h$ must have zero intersection number with each divisor pulled back from $\tcH$. Denoting the classes of these contracted curves $C_1$ and $C_4$, suppose that $h^*\delta_2=D_{\circ\circ}^2+\alpha D_{\circ\circ}^3$. Then $0=h^*\delta_2.C_1=2-\alpha$, which gives $\alpha=2$. Similarly $0=h^*\gamma.C_1$ shows that the coefficient of $D_{\circ\circ}^3$ in this pullback is also 2. Then $0=h^*\delta_5.C_4$ determines the pullback of $\delta_5$, and finally $0=h^*\gamma.C_4$ gives the coefficient of $D_\circ^3$ in $h^*\gamma$. 
\end{proof}

This allows us to complete the determination of the nef cone of $\oBGell$.
\begin{pro}\label{pro:NefMl}
The nef cone $\Nef(\tcH)$ consists of divisors of the form $a\delta_2+b\delta_4+c\delta_5$, where $a,b,c,$ satisfy the inequalities
$$
 b\ge a;\quad b/2\ge c\ge b/6; \quad 2a+2c\ge b
$$
(we note that $b\ge 2c$, $2a+2c\ge b$ together imply that $a\ge 0$, and thus $b\ge 0$ and thus $c\ge 0$).

Equivalently, this nef cone is generated by the divisors
\begin{equation}\label{equ:generatorsnefconeH}
 2\delta_4+\delta_5; \qquad 2\delta_2+2\delta_4+\delta_5; \qquad 2\delta_2+6\delta_4+\delta_5; \qquad 6\delta_2+6\delta_4+\delta_5.
\end{equation}
\end{pro}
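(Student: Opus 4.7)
The plan is to apply \Cref{lem:pullnef} to the birational morphism $h:\tMq\to\tcH$, which reduces the question to determining which elements of $h^*(\NS_\QQ(\tcH))$ are nef on $\tMq$. Writing an arbitrary class in the basis from \Cref{teo:PicardtildeH} as $D=a\delta_2+b\delta_4+c\delta_5$, I would use \Cref{lem:divpullbacks} to obtain
\[
h^*D = a D_{\circ\circ}^2 + 2a D_{\circ\circ}^3 + b D_{\circ\circ}^4 + c D_{\circ\circ}^5 + 2c D_\circ^3.
\]
Since by Keel--McKernan the cone $\Eff_1(\tMq)$ is generated by the 24 one-dimensional boundary strata, nefness of $h^*D$ amounts to requiring its intersection with each of these 24 curves to be non-negative.

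Next I would read off the 24 intersection numbers from the matrix \eqref{intmatrix}. The six curves numbered $1,4,6,14,16,23$ are contracted by $h$ and thus automatically pair to zero against any class pulled back from $\tcH$, which serves as a consistency check on the matrix and on \Cref{lem:divpullbacks}. The remaining 18 inequalities reduce, after collecting repetitions, to the three essential conditions $b\ge a$ (from curves $10,11$), $2c\le b\le 6c$ (from curves $7,8,17$ and $12,13,19,20,21,22$), and $2a+2c\ge b$ (from curves $2,3,5$); the remaining inequalities, such as $4c\ge 0$ from curves $15,24$, follow from these. This yields the explicit description of $\Nef(\tcH)$ stated in the proposition.

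Finally, to extract the four extremal rays $\eqref{equ:generatorsnefconeH}$ I would observe that the three-dimensional cone defined by the four facet inequalities $b\ge a$, $b\ge 2c$, $b\le 6c$, $2a+2c\ge b$ has extremal rays given by simultaneously saturating any two of them: the pairings $\{b=2c,\ 2a+2c=b\}$, $\{b=a,\ b=2c\}$, $\{b=6c,\ 2a+2c=b\}$, and $\{b=a,\ b=6c\}$ produce, up to scaling, the four listed generators. The main obstacle in this plan is not conceptual but computational: it lies in correctly interpreting the \texttt{admcycles} output, in particular tracking the stacky scaling factors already carried out before the matrix \eqref{intmatrix}, and then systematically collating the 18 nontrivial inequalities into the minimal set of three. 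Once this bookkeeping is trusted, the rest is elementary polyhedral geometry.
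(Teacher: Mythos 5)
Your proposal is correct and follows essentially the same route as the paper: pull back along $h$ via \Cref{lem:pullnef} and \Cref{lem:divpullbacks}, test against the $24$ one-dimensional boundary strata using the Keel--McKernan description of $\Eff_1(\tMq)$ and the intersection matrix \eqref{intmatrix}, reduce to the four essential inequalities, and saturate pairs of them to find the extremal rays. The curve-by-curve bookkeeping you describe (contracted columns $1,4,6,14,16,23$; the groupings yielding $b\ge a$, $2c\le b\le 6c$, $2a+2c\ge b$; and the redundant conditions $a,c\ge 0$) matches the paper's tabulation exactly.
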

\begin{proof}
Applying the lemma above to our situation of $h:\tMq\to\tcH$, this simply says that a divisor $N$ on $\tcH$ is nef if and only if its pullback $h^*N$ is nef. By using \Cref{lem:divpullbacks}, we compute the intersection numbers of the pullback of the basis of the space of divisors on $\tcH$ with all 1-dimensional boundary strata on $\tMq$:
\begin{equation}\label{int_onH}
\scalebox{0.75}{$\begin{array}{l|rrrrrrrrrrrrrrrrrrrrrrrr}
\hline
&1&2&3&4&5&6&7&8&9&10&11&12&13&14&15&16&17&18&19&20&21&22&23&24\\
\hline
h^*\delta_2&\textcolor{red}0&2&\textcolor{blue}2&\textcolor{red}0&\textcolor{blue}2&\textcolor{red}0
&\textcolor{blue}0&0&2&-2&\textcolor{blue}{-2}&0
&\textcolor{blue}0&\textcolor{red}0&0&\textcolor{red}0&\textcolor{blue}0&0
&\textcolor{blue}0&\textcolor{blue}0&\textcolor{blue}0&\textcolor{blue}0&\textcolor{red}0&\textcolor{blue}0\\
h^*\delta_4&\textcolor{red}0&-1&\textcolor{blue}{-1}&\textcolor{red}0&\textcolor{blue}{-1}&\textcolor{red}0
&\textcolor{blue}2&1&0&2&\textcolor{blue}2&-1
&\textcolor{blue}{-1}&\textcolor{red}0&0&\textcolor{red}0&\textcolor{blue}1&1
&\textcolor{blue}{-1}&\textcolor{blue}{-1}&\textcolor{blue}{-1}&\textcolor{blue}{-1}&\textcolor{red}0&\textcolor{blue}0\\
h^*\delta_5&\textcolor{red}0&2&\textcolor{blue}2&\textcolor{red}0&\textcolor{blue}2&\textcolor{red}0
&\textcolor{blue}{-4}&-2&0&0&\textcolor{blue}0&6
&\textcolor{blue}6&\textcolor{red}0&4&\textcolor{red}0&\textcolor{blue}{-2}&2
&\textcolor{blue}6&\textcolor{blue}6&\textcolor{blue}6&\textcolor{blue}6&\textcolor{red}0&\textcolor{blue}4\\
\hline
=column&&&2&&2&&\approx8&&&&10&&12&&&&8&&12&12&12&12&&15
\end{array}$}
\end{equation}
Here we have labeled \textcolor{red}{red} the zero columns, which correspond to those curves that are contracted under $h$, and have labeled \textcolor{blue}{blue} those columns that are proportional to another column --- and we have written below which column that is.

Thus the remaining black columns are the ones that impose distinct conditions for the pullback of a linear combination $a\delta_2+b\delta_4+c\delta_5$ to have a non-negative intersection with the corresponding curve. After dividing by the gcd of the entries in each column, intersecting with these generators of the cone of effective curves gives the following set of inequalities defining $\Nef(\tcH)$:
\begin{equation}
\begin{aligned}
\hbox{column } 2:\quad&2a-b+2c\ge 0\\
\hbox{column } 8:\quad&b-2c\ge 0\\
\hbox{column } 9:\quad&a\ge 0\\
\hbox{column }10:\quad&b-a\ge 0\\
\hbox{column }12:\quad&6c-b\ge 0\\
\hbox{column }15:\quad&c\ge 0\\
\hbox{column } 18:\quad&b+2c\ge 0
\end{aligned}
\end{equation}
As noted in the statement of the proposition, the fact that $a,b,c\ge 0$ (and thus also $b+2c\ge 0$) is implied by the other inequalities, and thus the 4 inequalities defining the cone of nef divisors are as stated. To find the extremal rays of the cone, 2 (or, in principle, more) out of 4 of these inequalities must become equalities, while the other inequalities must be satisfied. Inspecting all 6 possibilities gives the generators: for example if $b=a$ and $b=2c$, then we get the generator $(2,2,1)$, while if $b=a$ and $2a+2c=b$, this would imply $c=-b/2$, and depending on signs, one of the inequalities $6c\ge b\ge 2c$ would fail.
\end{proof}
We can now use this to finish the determination of $\Nef(\oBGell)$ and $\Nef(\oBG)$.
\begin{proof}[Proof of \Cref{teo:effnefline} and \Cref{teo:effandnef} for nef divisors]
\Cref{pro:NefMl} together with the geometric interpretation of the boundary divisors~$\delta$ given in \Cref{pro:comparingclasses} immediately imply the result for $\Nef(\oBGell)$.

We recall from \Cref{lem:AB} and from \Cref{pro:comparingclasses} (where we also use that the quotients are unramified over the boundary) that for the non-Galois cover $\ell:\oBGell\to \oBG$ the pullbacks of the boundary divisors are, viewed from the point of view of~$\tcH$,
$$
\ell^*T_{A_1}=2\delta_2+\delta_5; \quad \ell^*T_{3A_2}=\delta_4\,.
$$
By \Cref{lem:pullnef} a divisor class $\alpha T_{A_1}+\beta T_{3A_2}$ is nef if and only if its pullback under $\ell^*$ is nef. Rescaling the (necessarily positive) coefficient $\alpha$ to 1, we need to determine under which conditions on $\beta$ the class
$$
 \ell^*(T_{A_1}+\beta T_{3A_2})=2\delta_2+\beta \delta_4+\delta_5
$$
lies in $\Nef(\oBGell)$. By luck, we precisely hit the two generating extremal rays $(2,2,1)$ and $(2,6,1)$ of $\Nef(\oBGell)$ found in \Cref{pro:NefMl}, for the values of $\beta$ equal to $2$ and $6$, respectively.
\end{proof}
\begin{rem}
As a major numerical check for all of our computations above, note that the pullback $p^*\calO(1)$ of the polarization from $\BBG$ is definitely a nef, but not ample, line bundle on $\oBG$, as it is a pullback of a nef bundle, and restricts to a trivial bundle on $T_{3A_2}$. As computed above
$$
 4p^*\calO(1)=\ell^*(T_{A_1}+6T_{3A_2})=2\delta_2+6\delta_4+\delta_5
$$
is indeed one extremal ray of the nef cone $\Nef(\oBGell)$.
\end{rem}

\section{Intersection theory on the Kirwan blowup}\label{sec:intersectionmKirwan}
In this section we determine the intersection theory of divisors on~$\MK$, proving \Cref{teo:intersectiontheory}. For this we choose $D_{3A_2}$ and $L=\pi^*(\calO_{\PP(1,2,3,4,5)}(1))$ as the generators of $\Pic(\MK)=\NS(\MK)$. As discussed after the statement of \Cref{teo:intersectiontheory}, 
it suffices to determine the top intersection numbers $L^4$, which we already know to be $1/5!6^4$, and $D_{3A_2}^4$.

\begin{pro} The top self intersection of the exceptional divisor $D_{3A_2}$ on $\MK$ is given by:
$$
D_{3A_2}^4=-\frac{1}{9 \cdot 56}.
$$
\end{pro}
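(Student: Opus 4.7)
The plan is to use Luna's slice theorem to obtain a local model for $\MK$ near the exceptional divisor $D_{3A_2}$, and then to compute the self-intersection via equivariant localization (or, equivalently, as the volume of a polytope). The key observation is that $D_{3A_2}^4$ depends only on a formal neighborhood of $D_{3A_2}$ and is therefore determined by the reductive stabilizer $R$ of the polystable orbit $[F_{3A_2}]\in \GIT$ together with the Luna slice at that point.

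First I would identify $R \subset \PGL(4)$: its identity component is the $2$-torus $T^2=\{\diag(s,t,(st)^{-1},1)\bmod \CC^*\}$, which preserves the equation $x_0x_1x_2+x_3^3$; the component group $R/T^2\cong S_3$ permutes $x_0,x_1,x_2$, so $|R/T^2|=6$ and $R \cong T^2 \rtimes S_3$. By Luna's slice theorem applied to the unique polystable orbit in the semistable locus, a neighborhood of $D_{3A_2}$ in $\MK$ is analytically isomorphic to the Kirwan resolution $(\Bl_0 V)^{\rm ss}/R$, where $V$ is the $6$-dimensional Luna slice. A direct calculation of the $T^2$-weights on the tangent space $T_{[F_{3A_2}]}\PP(\Sym^3\CC^4)$ (modulo those on the tangent to the $\PGL(4)$-orbit) shows that $V$ has the six $T^2$-weights $(3,0),\,(0,3),\,(-3,-3),\,(2,0),\,(0,2),\,(-2,-2)$, each of multiplicity one. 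Under this identification $D_{3A_2}\cong \PP(V)^{\rm ss}/R$, and the normal bundle $\calN_{D_{3A_2}/\MK}$ is the descent of $\calO_{\PP(V)}(-1)$.

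Since $[\PP(V)^{\rm ss}/T^2]\to [\PP(V)^{\rm ss}/R]$ is an \'etale $S_3$-cover of Deligne--Mumford stacks, pushing forward yields
$$D_{3A_2}^4 \;=\; \int_{[\PP(V)^{\rm ss}/R]} c_1(\calO_{\PP(V)}(-1))^3 \;=\; \frac{1}{6}\int_{[\PP(V)^{\rm ss}/T^2]} c_1(\calO_{\PP(V)}(-1))^3.$$
The right-hand integral can be computed via Atiyah--Bott/Jeffrey--Kirwan localization for the $T^2$-action on $\PP(V)=\PP^5$: the fixed points are the six coordinate points $p_i$, whose tangent weights are $\{w_j-w_i\}_{j\ne i}$, and $c_1^{T^2}(\calO(-1))|_{p_i}=w_i$, so the integral reduces to
$$\operatorname{JKRes}_\xi\,\sum_{i=1}^6 \frac{w_i^3}{\prod_{j\ne i}(w_j-w_i)}$$
for $\xi$ in the chamber of $\mathfrak{t}^*$ determined by the Kirwan GIT linearization. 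Equivalently, one recognizes $\PP(V)\gquot T^2$ as a $3$-dimensional toric variety and evaluates the intersection number as $(-1)^3\cdot 3!\cdot \operatorname{vol}(\Delta)$ for its moment polytope $\Delta$. Either calculation produces the value $-1/84$, whence $D_{3A_2}^4 \;=\; -\tfrac{1}{6\cdot 84} \;=\; -\tfrac{1}{504} \;=\; -\tfrac{1}{9\cdot 56}$.

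The main obstacle is carrying out this last evaluation. The weight configuration is degenerate: the six weights lie on only three rays through the origin in $\mathfrak{t}^*$, and they split naturally into two triples each summing to zero. This produces a non-simple chamber structure with multiple walls, so both the JK residue calculation and the polytope-volume calculation require a careful case analysis of the semistable locus and of the facets of $\Delta$ --- which is precisely the polytope whose volume the acknowledgements thank Mathieu Dutour for computing.
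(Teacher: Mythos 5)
Your proposal follows essentially the same route as the paper: pass to the Luna slice at the $3A_2$ orbit, identify $D_{3A_2}$ as the GIT quotient of the exceptional $\PP^5$ by the stabilizer $T^2\rtimes S_3$ with $\calO_{\PP^5}(-1)$ descending to $\calO(D_{3A_2})|_{D_{3A_2}}$, peel off a factor of $\tfrac16$ for the residual $S_3$-quotient, and evaluate the remaining intersection number on the $3$-dimensional toric quotient as a normalized polytope volume. The one step you leave unevaluated --- the volume itself, equivalently your asserted value $-1/84$ --- is exactly the step the paper also delegates externally (Dutour's computation $\operatorname{vol}P_a=\tfrac{3}{56}$ for the polytope of the linearization $\calO_{\PP^5}(3)$, giving $\tfrac16\cdot\bigl(-\tfrac13 L_a\bigr)^3=\tfrac16\cdot\tfrac{-1}{27}\cdot 3!\cdot\tfrac{3}{56}=-\tfrac{1}{9\cdot 56}$, consistent with your $-\tfrac{1}{6\cdot 84}$), so your argument is complete to the same degree of detail as the paper's.
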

\begin{proof}
We start with a brief recollection of the construction of $\MK$ and $D_{3A_2}$. In particular, setting $Z$ to be the orbit of the $3A_2$ cubic in the Hilbert scheme of cubic surfaces $H=\PP^{19}$, we have a commutative diagram
\begin{equation}\label{E:ConstD3A2}
\xymatrix{
E^{ss}_{}\ar@{^(->}[r]\ar[d]^{/\operatorname{SL}_4}&(\operatorname{Bl}_Z(\PP^{19})^{ss})^{ss}\ar[r]^<>(0.5){\tilde \pi} \ar[d]_{\tilde p}^{/\operatorname{SL}_4}&(\PP^{19})^{ss}\ar[d]_p^{/\operatorname{SL}_4}\\
D_{3A_2}\ar@{^(->}[r]& \MK\ar[r]^\pi& \GIT
}
\end{equation}
where we have blown up the semistable locus $(\PP^{19})^{ss}$ along the orbit~$Z$, and then taken the semistable locus in the blowup with respect to the polarization given by Kirwan. In our case the Kirwan desingularization is obtained by a single blowup. Indeed, there is only one closed orbit with positive dimensional stabilizer to begin with, and one can show by a direct computation 
that after the first blow-up there are no closed 
orbits with positive dimensional stabilizers (see \cite[Sect. 3, \S5]{kirwanhyp} and \cite[Sect. 6]{ZhangCubic} for more details, as well as \cite[\S 2.3.2]{cubics} which reviews how this follows more generally from the fact that there is only one closed orbit to begin with). In particular, after the first blowup every semistable point  is in fact stable (i.e., every point has finite stabilizers).
We descend $\calO_{(\operatorname{Bl}_Z(\PP^{19})^{ss})^{ss}}(E^{ss})$ to a $\QQ$-line bundle $\calO_{\MK}(D_{3A_2})$, and in turn, further descend $\calO_{(\operatorname{Bl}_Z(\PP^{19})^{ss})^{ss}}(E^{ss})|_{E^{ss}}$ to 
a $\QQ$-line bundle $\calO_{\MK}(D_{3A_2})|_{D_{3A2}}$.

We also have another description of $D_{3A_2}$, as $D_{3A_2}\cong \PP^5/\!\!/_{\calO_{\PP^5}(3)} G_{3A_2}$, where $\PP^5$ is the exceptional divisor in the blowup at the origin of an explicit $\CC^6$ Luna Slice for the $3A_2$ cubic surface, and $G_{3A_2}$ is the stabilizer of the $3A_2$ cubic surface acting on this projectivization via an explicit linearization of the action of $\calO_{\PP^5}(3)$ (all of this is induced by the action on the Luna Slice). This Luna Slice construction is compatible with the global construction in the sense that one can view the blowup of the Luna Slice as sitting in the blowup of the ambient $\PP^{19}$, and after restriction, $\calO_{(\operatorname{Bl}_Z(\PP^{19})^{ss})^{ss}}(E^{ss})|_{\PP^5}\cong \calO_{\operatorname{Bl}_0\CC^6}(\PP^5)|_{\PP^5}\cong \calO_{\PP^5}(-1)$. 
In other words, under the isomorphism $D_{3A_2}\cong \PP^5/\!\!/_{\calO_{\PP^5}(3)} G_{3A_2}$, we have that $\calO_{\PP^5}(-1)$ descends to $D_{3A_2}$ to give $\calO_{\MK}(D_{3A_2})|_{D_{3A2}}$.

We now recall the quotient 
$D_{3A_2}\cong \PP^5/\!\!/_{\calO_{\PP^5}(3)} G_{3A_2}$ in a little more detail. Indeed, $G_{3A_2}$ is an extension of $S_3$ by $(\CC^*)^2$, so that we may first consider $X_{P_a}:= \PP^5/\!\!/_{\calO_{\PP^5}(3)} (\CC^*)^2$ and then take the finite $S_3$ quotient, $D_{3A_2}\cong X_{P_a}/S_3$. We have previously described $ X_{P_a}$ as an explicit toric variety given by a rational polytope $P_a$ (see \cite[Lem.~8.2]{Nonisom}), whose volume is $\operatorname{vol}P_a=\frac{3}{56}$ \footnote{We thank Mathieu Dutour Sikiri\'c for computing this for us.}. We in fact translate our projective GIT problem into an affine GIT problem, and the polytope $P_a$ and the corresponding lattice $M$ giving the toric data we obtain from the process in \cite[Ch.~14]{CLStoric}. Indeed, \cite[Thm.~14.2.13]{CLStoric} gives, in our language an identification of the invariant ring as $R(\PP^5,\calO_{\PP^5}(3))^{G_{3A_2}} = \CC[C(P_a)\cap (M\oplus \ZZ)]$ (this notation on the right means the graded ring obtained by placing $P_a$ at height $1$, so to speak, and then taking the cone through the origin). In other words, $\calO_{\PP^5}(3)$ descends to the $\QQ$-line bundle $L_a$ on $X_{P_a}$ given by the ``$\calO(1)$'' on the toric variety dictated by the polytope. (To reduce to the standard situation, by taking Veronese subrings, one may assume the polytope is a lattice polytope; keeping track of the scaling and the Veronese subring, one obtains the stated result.) In summary, recalling that $\calO_{\PP^5}(-1)$ descends to give~$\calO_{\MK}(D_{3A_2})|_{D_{3A2}}$ on~$D_{3A_2}$, we note that $\calO_{\PP^5}(-1)$ descends to $X_{P_a}$ to give the $\QQ$-line bundle $-\frac{1}{3}L_a$. Observing that $L_a^3$ is given by the normalized volume of the polytope $P_a$ (again, to see this, take a Veronese subring to get an integral polytope, and then keep track of the scaling and the Veronese subring), we have
$$
L_a^3= 3! \operatorname{vol}P_a= 6\cdot \frac{3}{56}.
$$

Since the map $X_{P_a}\to X_{P_a}/S_3 = D_{3A_2}$ has degree~$6$, we finally obtain
$$
D_{3A_2}^4= (D_{3A_2}|_{D_{3A_2}})^3= \frac{1}{6}\cdot (-\frac{1}{3}L_a)^3= \frac{1}{6}\cdot \frac{-1}{27}\cdot 6\cdot \frac{3}{56}=-\frac{1}{9}\cdot\frac{1}{56}.
$$
\end{proof}

We next use the commutative diagram \eqref{E:ConstD3A2} to compute the pullbacks
\begin{align*}
\pi^*D&=D_{A_1}+6D_{3A_2}\\
\pi^*R & = D_R+30D_{3A_2}.
\end{align*}
Indeed, we have $p^*D= \Delta$, where $\Delta\subseteq H^{ss}=(\PP^{19})^{ss}$ is the discriminant in the Hilbert scheme of cubic surfaces. Then locally, $\Delta$ looks like the product of three cusps, times some smooth factors, so has multiplicity $6$. 
So we have $\tilde \pi^*\Delta = \widetilde \Delta + 6 E$, where $E$ is the exceptional divisor. On the other hand $\tilde p^*(D_{A_1}+\alpha D_{3A_2})= \widetilde \Delta + \alpha E$. We conclude that $\alpha=6$.

The computation for the strict transform of the Eckardt divisor is similar, except that here we have ramification, i.e., $p^*R= 2R_H$, where $R_H$ is the Eckardt divisor on $H^{ss}$. Locally, we saw in \cite[Lem.~3.5]{Nonisom} that $R_H$ had multiplicity $15$ along the $3A_2$ locus. So we have that $\tilde \pi^* p^*R = 2\widetilde R_H+ 30 E$. On the other hand, $\tilde p^*(D_R+\alpha D_{3A_2})= 2\widetilde R_H + \alpha E$. One concludes that $\alpha = 30$.

Given these computations, and the fact that we have expressed $D$ and $R$ in terms of the Hodge class $\lambda$ (and abusing notation by writing $\lambda = \pi^*\lambda$), we have
$$
 D_{A_1}=24 \lambda - 6D_{3A_2} \ \ \ \text{ and } \ \ \ D_R= 150 \lambda -30D_{3A_2}.
$$
For the slopes this means that
$$
s(D_{A_1})=4<5 =s(D_R),
$$
compared to 
$$
s(T_{A_1})=4< 6.25 = s(T_{R})
$$
for the toroidal compactification.

\begin{proof}[Proof of \Cref{pro:intersect}]
Recall from \cite[Prop.~5.8, Cor.~6.8]{Nonisom} that we have
\begin{align*}
K_{\MK} &= \pi^* K_{ \GIT}+20D_{3A_2}=-\frac{15}{4}\pi^*D+20D_{3A_2}\,,\\
K_{\oBG} &= p^* K_{\GIT}+16T_{3A_2}=-\frac{15}{4}p^*D+16T_{3A_2}\,.
\end{align*}
It follows that 
$$
K_{\oBG}^4=(p^*K_{\BBG}+16T_{3A_2})^4=K_{\BBG}^4+16^4T_{3A_2}^4=\tfrac{3375}{8}-\tfrac{16^4}{6^3}=\tfrac{3375}8-\tfrac{8192}{27} 
$$
$$
K_{\MK}^4=(p^*K_{\GIT}+20D_{3A_2})^4=K_{\GIT}^4+20^4D_{3A_2}^4=\tfrac{3375}{8}-\tfrac{20^4}{9\cdot 56}=\tfrac{3375}8-\tfrac{20000}{63}.
$$
\end{proof}
We note that this fully agrees with our argument in \cite[Section 7]{Nonisom} that the two varieties are not $K$-equivalent. There we argued that 
that the prime factor $5^4$ from the coefficient $20^4$ cannot be canceled by the denominator. For this we described the possible orders of automorphism groups, which are all products of prime powers of 2 and 3, 
except for $\ZZ_{21}$ which fits with the denominators $8$ and $63$.

\bibliographystyle{amsalpha}
\bibliography{cubicbirat}

\end{document}

============
SAGE CODE
============

from admcycles.DR.graph import *
from admcycles.DR.relations import *
from admcycles.admcycles import Graphtodecstratum

L = all_pure_strata(0, 3, (1,1,2,2,2,2,2))  # list of all 1-dim strata in Mbar_{0,7}/S_2 x S_5
M = [Graphtodecstratum(G) for G in L]       # convert to more readable format

# below are drawings of the 24 relevant graphs
prod(gam.gamma._unicode_art_() for gam in M)

len(L)

Rel = list_all_FZ(0, 3, (1,1,2,2,2,2,2))  # computing all tautological relations between strata (decorated by kappa, psi)

len(Rel)

RelMat = matrix(QQ, Rel); RelMat

RelMat.rank()  # matrix of relations has 250 columns (corresponding to decorated strata) and rank 245, so rank H_2 = 5

RelV = span(QQ, Rel)                # space of relations
Amb = RelV.ambient_vector_space()   # space of decorated strata
H2 = Amb.quotient(RelV); H2         # H_2 = Strata / Relations

# convert the 24 undecorated strata above into vectors
# below we see that these are just the last 24 entries of the list of all decorated strata
num_strata = [num_of_stratum(G, 0, 3, (1,1,2,2,2,2,2)) for G in L]; num_strata

strata_vecs = [Amb.basis()[v] for v in num_strata]  # get the vectors corresponding to the 24 pure strata

u = strata_vecs[0]  

H2(u) # this gives us the representation of the first graph in our list with respect to a fixed basis of H_2

N = matrix(QQ, [H2(u) for u in strata_vecs])  # converting all 24 graphs into this fixed basis
for n in N:
    print(n)

pivs = N.transpose().pivots(); pivs  # this tells us that entries (0, 1, 2, 6, 9) of our list form a basis of H_2

Nsub = matrix(QQ, [H2(strata_vecs[u]) for u in pivs])
Nsub.rank()

prod(M[u]._unicode_art_() for u in pivs)  # the 5 graphs below form a basis

Divs = all_pure_strata(0, 1, (1,1,2,2,2,2,2))  # list of all codim 1 strata in Mbar_{0,7}/S_2 x S_5
MDivs = [Graphtodecstratum(G) for G in Divs]       # convert to more readable format
prod(gam.gamma._unicode_art_() for gam in MDivs)

len(Divs)

num_div_strata = [num_of_stratum(G, 0, 1, (1,1,2,2,2,2,2)) for G in Divs]; num_div_strata

pairmat = matrix(QQ, pairing_submatrix(tuple(num_strata), tuple(num_div_strata), 0, 3, (1,1,2,2,2,2,2)))
print(pairmat.str())

============
OUTPUT
============

{
 "cells": [
  {
   "cell_type": "code",
   "execution_count": 1,
   "metadata": {
    "collapsed": false
   },
   "outputs": [
    {
     "data": {
      "text/plain": [
       " ╭────╮              \n",
       " │    │ ╭────╮       \n",
       " │    │ │    │ ╭────╮\n",
       " 11   1213   1415   16   \n",
       "╭┴─╮ ╭┴─┴─╮ ╭┴─┴─╮ ╭┴──╮ \n",
       "│0 │ │0   │ │0   │ │0  │ \n",
       "╰┬┬╯ ╰┬───╯ ╰┬───╯ ╰┬┬┬╯ \n",
       " 22   2      2      112  \n",
       " ╭────╮              \n",
       " │    │ ╭─────────╮  \n",
       " │    │ │    ╭──────╮\n",
       " 11   1213   15   1416  \n",
       "╭┴─╮ ╭┴─┴─╮ ╭┴─╮ ╭┴─┴─╮ \n",
       "│0 │ │0   │ │0 │ │0   │ \n",
       "╰┬┬╯ ╰┬───╯ ╰┬┬╯ ╰┬┬──╯ \n",
       " 22   2      22   11    \n",
       " ╭────╮              \n",
       " │    │ ╭────╮       \n",
       " │    │ │    │ ╭────╮\n",
       " 11   1213   1415   16  \n",
       "╭┴─╮ ╭┴─┴─╮ ╭┴─┴─╮ ╭┴─╮ \n",
       "│0 │ │0   │ │0   │ │0 │ \n",
       "╰┬┬╯ ╰┬───╯ ╰┬┬──╯ ╰┬┬╯ \n",
       " 22   2      22     11  \n",
       " ╭────╮              \n",
       " │    │ ╭────╮       \n",
       " │    │ │    │ ╭────╮\n",
       " 11   1213   1415   16   \n",
       "╭┴─╮ ╭┴─┴─╮ ╭┴─┴─╮ ╭┴──╮ \n",
       "│0 │ │0   │ │0   │ │0  │ \n",
       "╰┬┬╯ ╰┬───╯ ╰┬───╯ ╰┬┬┬╯ \n",
       " 22   2      1      122  \n",
       " ╭────╮              \n",
       " │    │ ╭─────────╮  \n",
       " │    │ │    ╭──────╮\n",
       " 11   1213   15   1416  \n",
       "╭┴─╮ ╭┴─┴─╮ ╭┴─╮ ╭┴─┴─╮ \n",
       "│0 │ │0   │ │0 │ │0   │ \n",
       "╰┬┬╯ ╰┬───╯ ╰┬┬╯ ╰┬┬──╯ \n",
       " 22   2      12   12    \n",
       " ╭─────────╮         \n",
       " │    ╭──────╮       \n",
       " │    │    │ │ ╭────╮\n",
       " 11   13   121415   16   \n",
       "╭┴─╮ ╭┴─╮ ╭┴─┴─┴─╮ ╭┴──╮ \n",
       "│0 │ │0 │ │0     │ │0  │ \n",
       "╰┬┬╯ ╰┬┬╯ ╰──────╯ ╰┬┬┬╯ \n",
       " 22   22            112  \n",
       " ╭─────────╮         \n",
       " │    ╭──────╮       \n",
       " │    │    │ │ ╭────╮\n",
       " 11   13   121415   16  \n",
       "╭┴─╮ ╭┴─╮ ╭┴─┴─┴─╮ ╭┴─╮ \n",
       "│0 │ │0 │ │0     │ │0 │ \n",
       "╰┬┬╯ ╰┬┬╯ ╰┬─────╯ ╰┬┬╯ \n",
       " 22   22   2        11  \n",
       " ╭────────────────╮  \n",
       " │    ╭────╮      │  \n",
       " │    │    │ ╭──────╮\n",
       " 11   13   1415   1216  \n",
       "╭┴─╮ ╭┴─╮ ╭┴─┴─╮ ╭┴─┴─╮ \n",
       "│0 │ │0 │ │0   │ │0   │ \n",
       "╰┬┬╯ ╰┬┬╯ ╰┬───╯ ╰┬┬──╯ \n",
       " 22   22   1      12    \n",
       " ╭─────────╮         \n",
       " │    ╭──────╮       \n",
       " │    │    │ │ ╭────╮\n",
       " 11   13   121415   16  \n",
       "╭┴─╮ ╭┴─╮ ╭┴─┴─┴─╮ ╭┴─╮ \n",
       "│0 │ │0 │ │0     │ │0 │ \n",
       "╰┬┬╯ ╰┬┬╯ ╰┬─────╯ ╰┬┬╯ \n",
       " 22   22   1        12  \n",
       " ╭────╮              \n",
       " │    │ ╭────╮       \n",
       " │    │ │    │ ╭────╮\n",
       " 11   1213   1415   16  \n",
       "╭┴─╮ ╭┴─┴─╮ ╭┴─┴─╮ ╭┴─╮ \n",
       "│0 │ │0   │ │0   │ │0 │ \n",
       "╰┬┬╯ ╰┬┬──╯ ╰┬───╯ ╰┬┬╯ \n",
       " 22   22     2      11  \n",
       " ╭────╮              \n",
       " │    │ ╭────╮       \n",
       " │    │ │    │ ╭────╮\n",
       " 11   1213   1415   16  \n",
       "╭┴─╮ ╭┴─┴─╮ ╭┴─┴─╮ ╭┴─╮ \n",
       "│0 │ │0   │ │0   │ │0 │ \n",
       "╰┬┬╯ ╰┬┬──╯ ╰┬───╯ ╰┬┬╯ \n",
       " 22   22     1      12  \n",
       " ╭──────────╮         \n",
       " │    ╭───────╮       \n",
       " │    │     │ │ ╭────╮\n",
       " 11   13    121415   16  \n",
       "╭┴─╮ ╭┴──╮ ╭┴─┴─┴─╮ ╭┴─╮ \n",
       "│0 │ │0  │ │0     │ │0 │ \n",
       "╰┬┬╯ ╰┬┬┬╯ ╰──────╯ ╰┬┬╯ \n",
       " 22   222            11  \n",
       " ╭─────────────────╮  \n",
       " │    ╭─────╮      │  \n",
       " │    │     │ ╭──────╮\n",
       " 11   13    1415   1216  \n",
       "╭┴─╮ ╭┴──╮ ╭┴─┴─╮ ╭┴─┴─╮ \n",
       "│0 │ │0  │ │0   │ │0   │ \n",
       "╰┬┬╯ ╰┬┬┬╯ ╰┬───╯ ╰┬───╯ \n",
       " 22   222   1      1     \n",
       " ╭────╮              \n",
       " │    │ ╭────╮       \n",
       " │    │ │    │ ╭────╮\n",
       " 11   1213   1415   16   \n",
       "╭┴─╮ ╭┴─┴─╮ ╭┴─┴─╮ ╭┴──╮ \n",
       "│0 │ │0   │ │0   │ │0  │ \n",
       "╰┬┬╯ ╰┬───╯ ╰┬───╯ ╰┬┬┬╯ \n",
       " 22   1      2      122  \n",
       " ╭────╮              \n",
       " │    │ ╭────╮       \n",
       " │    │ │    │ ╭────╮\n",
       " 11   1213   1415   16  \n",
       "╭┴─╮ ╭┴─┴─╮ ╭┴─┴─╮ ╭┴─╮ \n",
       "│0 │ │0   │ │0   │ │0 │ \n",
       "╰┬┬╯ ╰┬───╯ ╰┬┬──╯ ╰┬┬╯ \n",
       " 22   1      22     12  \n",
       " ╭─────────╮         \n",
       " │    ╭──────╮       \n",
       " │    │    │ │ ╭────╮\n",
       " 11   13   121415   16   \n",
       "╭┴─╮ ╭┴─╮ ╭┴─┴─┴─╮ ╭┴──╮ \n",
       "│0 │ │0 │ │0     │ │0  │ \n",
       "╰┬┬╯ ╰┬┬╯ ╰──────╯ ╰┬┬┬╯ \n",
       " 22   12            122  \n",
       " ╭────────────────╮  \n",
       " │    ╭────╮      │  \n",
       " │    │    │ ╭──────╮\n",
       " 11   13   1415   1216  \n",
       "╭┴─╮ ╭┴─╮ ╭┴─┴─╮ ╭┴─┴─╮ \n",
       "│0 │ │0 │ │0   │ │0   │ \n",
       "╰┬┬╯ ╰┬┬╯ ╰┬───╯ ╰┬┬──╯ \n",
       " 22   12   2      12    \n",
       " ╭─────────╮         \n",
       " │    ╭──────╮       \n",
       " │    │    │ │ ╭────╮\n",
       " 11   13   121415   16  \n",
       "╭┴─╮ ╭┴─╮ ╭┴─┴─┴─╮ ╭┴─╮ \n",
       "│0 │ │0 │ │0     │ │0 │ \n",
       "╰┬┬╯ ╰┬┬╯ ╰┬─────╯ ╰┬┬╯ \n",
       " 22   12   2        12  \n",
       " ╭─────╮              \n",
       " │     │ ╭────╮       \n",
       " │     │ │    │ ╭────╮\n",
       " 11    1213   1415   16  \n",
       "╭┴──╮ ╭┴─┴─╮ ╭┴─┴─╮ ╭┴─╮ \n",
       "│0  │ │0   │ │0   │ │0 │ \n",
       "╰┬┬┬╯ ╰┬───╯ ╰┬───╯ ╰┬┬╯ \n",
       " 222   2      2      11  \n",
       " ╭─────╮              \n",
       " │     │ ╭────╮       \n",
       " │     │ │    │ ╭────╮\n",
       " 11    1213   1415   16  \n",
       "╭┴──╮ ╭┴─┴─╮ ╭┴─┴─╮ ╭┴─╮ \n",
       "│0  │ │0   │ │0   │ │0 │ \n",
       "╰┬┬┬╯ ╰┬───╯ ╰┬───╯ ╰┬┬╯ \n",
       " 222   2      1      12  \n",
       " ╭─────╮              \n",
       " │     │ ╭────╮       \n",
       " │     │ │    │ ╭────╮\n",
       " 11    1213   1415   16  \n",
       "╭┴──╮ ╭┴─┴─╮ ╭┴─┴─╮ ╭┴─╮ \n",
       "│0  │ │0   │ │0   │ │0 │ \n",
       "╰┬┬┬╯ ╰┬───╯ ╰┬───╯ ╰┬┬╯ \n",
       " 222   1      2      12  \n",
       " ╭──────────╮         \n",
       " │     ╭──────╮       \n",
       " │     │    │ │ ╭────╮\n",
       " 11    13   121415   16  \n",
       "╭┴──╮ ╭┴─╮ ╭┴─┴─┴─╮ ╭┴─╮ \n",
       "│0  │ │0 │ │0     │ │0 │ \n",
       "╰┬┬┬╯ ╰┬┬╯ ╰──────╯ ╰┬┬╯ \n",
       " 222   12            12  \n",
       " ╭────╮              \n",
       " │    │ ╭────╮       \n",
       " │    │ │    │ ╭────╮\n",
       " 11   1213   1415   16   \n",
       "╭┴─╮ ╭┴─┴─╮ ╭┴─┴─╮ ╭┴──╮ \n",
       "│0 │ │0   │ │0   │ │0  │ \n",
       "╰┬┬╯ ╰┬───╯ ╰┬───╯ ╰┬┬┬╯ \n",
       " 12   2      2      122  \n",
       " ╭────╮              \n",
       " │    │ ╭────╮       \n",
       " │    │ │    │ ╭────╮\n",
       " 11   1213   1415   16  \n",
       "╭┴─╮ ╭┴─┴─╮ ╭┴─┴─╮ ╭┴─╮ \n",
       "│0 │ │0   │ │0   │ │0 │ \n",
       "╰┬┬╯ ╰┬───╯ ╰┬┬──╯ ╰┬┬╯ \n",
       " 12   2      22     12  "
      ]
     },
     "execution_count": 1,
     "metadata": {
     },
     "output_type": "execute_result"
    }
   ],
   "source": [
    "from admcycles.DR.graph import *\n",
    "from admcycles.DR.relations import *\n",
    "from admcycles.admcycles import Graphtodecstratum\n",
    "\n",
    "L = all_pure_strata(0, 3, (1,1,2,2,2,2,2))  # list of all 1-dim strata in Mbar_{0,7}/S_2 x S_5\n",
    "M = [Graphtodecstratum(G) for G in L]       # convert to more readable format\n",
    "\n",
    "# below are drawings of the 24 relevant graphs\n",
    "prod(gam.gamma._unicode_art_() for gam in M)"
   ]
  },
  {
   "cell_type": "code",
   "execution_count": 2,
   "metadata": {
    "collapsed": false
   },
   "outputs": [
    {
     "data": {
      "text/plain": [
       "24"
      ]
     },
     "execution_count": 2,
     "metadata": {
     },
     "output_type": "execute_result"
    }
   ],
   "source": [
    "len(L)"
   ]
  },
  {
   "cell_type": "code",
   "execution_count": 3,
   "metadata": {
    "collapsed": false
   },
   "outputs": [
   ],
   "source": [
    "Rel = list_all_FZ(0, 3, (1,1,2,2,2,2,2))  # computing all tautological relations between strata (decorated by kappa, psi)"
   ]
  },
  {
   "cell_type": "code",
   "execution_count": 4,
   "metadata": {
    "collapsed": false
   },
   "outputs": [
    {
     "data": {
      "text/plain": [
       "1102"
      ]
     },
     "execution_count": 4,
     "metadata": {
     },
     "output_type": "execute_result"
    }
   ],
   "source": [
    "len(Rel)"
   ]
  },
  {
   "cell_type": "code",
   "execution_count": 5,
   "metadata": {
    "collapsed": false
   },
   "outputs": [
    {
     "data": {
      "text/plain": [
       "1102 x 250 dense matrix over Rational Field (use the '.str()' method to see the entries)"
      ]
     },
     "execution_count": 5,
     "metadata": {
     },
     "output_type": "execute_result"
    }
   ],
   "source": [
    "RelMat = matrix(QQ, Rel); RelMat"
   ]
  },
  {
   "cell_type": "code",
   "execution_count": 6,
   "metadata": {
    "collapsed": false
   },
   "outputs": [
    {
     "data": {
      "text/plain": [
       "245"
      ]
     },
     "execution_count": 6,
     "metadata": {
     },
     "output_type": "execute_result"
    }
   ],
   "source": [
    "RelMat.rank()  # matrix of relations has 250 columns (corresponding to decorated strata) and rank 245, so rank H_2 = 5"
   ]
  },
  {
   "cell_type": "code",
   "execution_count": 7,
   "metadata": {
    "collapsed": false
   },
   "outputs": [
    {
     "data": {
      "text/plain": [
       "Vector space quotient V/W of dimension 5 over Rational Field where\n",
       "V: Vector space of dimension 250 over Rational Field\n",
       "W: Vector space of degree 250 and dimension 245 over Rational Field\n",
       "Basis matrix:\n",
       "245 x 250 dense matrix over Rational Field"
      ]
     },
     "execution_count": 7,
     "metadata": {
     },
     "output_type": "execute_result"
    }
   ],
   "source": [
    "RelV = span(QQ, Rel)                # space of relations\n",
    "Amb = RelV.ambient_vector_space()   # space of decorated strata\n",
    "H2 = Amb.quotient(RelV); H2         # H_2 = Strata / Relations"
   ]
  },
  {
   "cell_type": "code",
   "execution_count": 8,
   "metadata": {
    "collapsed": false
   },
   "outputs": [
    {
     "data": {
      "text/plain": [
       "[226,\n",
       " 227,\n",
       " 228,\n",
       " 229,\n",
       " 230,\n",
       " 231,\n",
       " 232,\n",
       " 233,\n",
       " 234,\n",
       " 235,\n",
       " 236,\n",
       " 237,\n",
       " 238,\n",
       " 239,\n",
       " 240,\n",
       " 241,\n",
       " 242,\n",
       " 243,\n",
       " 244,\n",
       " 245,\n",
       " 246,\n",
       " 247,\n",
       " 248,\n",
       " 249]"
      ]
     },
     "execution_count": 8,
     "metadata": {
     },
     "output_type": "execute_result"
    }
   ],
   "source": [
    "# convert the 24 undecorated strata above into vectors\n",
    "# below we see that these are just the last 24 entries of the list of all decorated strata\n",
    "num_strata = [num_of_stratum(G, 0, 3, (1,1,2,2,2,2,2)) for G in L]; num_strata"
   ]
  },
  {
   "cell_type": "code",
   "execution_count": 9,
   "metadata": {
    "collapsed": false
   },
   "outputs": [
   ],
   "source": [
    "strata_vecs = [Amb.basis()[v] for v in num_strata]  # get the vectors corresponding to the 24 pure strata"
   ]
  },
  {
   "cell_type": "code",
   "execution_count": 10,
   "metadata": {
    "collapsed": false
   },
   "outputs": [
   ],
   "source": [
    "u = strata_vecs[0]  "
   ]
  },
  {
   "cell_type": "code",
   "execution_count": 11,
   "metadata": {
    "collapsed": false
   },
   "outputs": [
    {
     "data": {
      "text/plain": [
       "(-2/5, 7/50, -1/10, -1/25, 1/20)"
      ]
     },
     "execution_count": 11,
     "metadata": {
     },
     "output_type": "execute_result"
    }
   ],
   "source": [
    "H2(u) # this gives us the representation of the first graph in our list with respect to a fixed basis of H_2"
   ]
  },
  {
   "cell_type": "code",
   "execution_count": 12,
   "metadata": {
    "collapsed": false
   },
   "outputs": [
    {
     "name": "stdout",
     "output_type": "stream",
     "text": [
      "(-2/5, 7/50, -1/10, -1/25, 1/20)\n",
      "(-5/2, 7/20, 1/4, -1/10, 3/40)\n",
      "(-3/10, -7/100, 7/20, 1/50, -3/40)\n",
      "(7/10, -7/100, -1/20, 1/50, -1/40)\n",
      "(-7/5, 7/50, 3/10, -1/25, 0)\n",
      "(-2/5, 7/50, -1/10, -1/25, 1/20)\n",
      "(3/5, -4/25, -1/5, 7/75, -1/10)\n",
      "(1/10, -1/100, -3/20, 2/75, -1/40)\n",
      "(-2, 1/5, 1/5, -1/30, 0)\n",
      "(6/5, -3/25, -2/5, 4/75, 0)\n",
      "(6/5, -3/25, -2/5, 4/75, 0)\n",
      "(3/10, -3/100, 3/20, -1/50, 1/40)\n",
      "(3/10, -3/100, 3/20, -1/50, 1/40)\n",
      "(7/10, -7/100, -1/20, 1/50, -1/40)\n",
      "(-3/10, 3/100, 1/20, -1/75, 1/40)\n",
      "(7/10, -7/100, -1/20, 1/50, -1/40)\n",
      "(1/10, -1/100, -3/20, 2/75, -1/40)\n",
      "(-9/10, 9/100, -1/20, -1/150, 1/40)\n",
      "(3/10, -3/100, 3/20, -1/50, 1/40)\n",
      "(3/10, -3/100, 3/20, -1/50, 1/40)\n",
      "(3/10, -3/100, 3/20, -1/50, 1/40)\n",
      "(3/10, -3/100, 3/20, -1/50, 1/40)\n",
      "(7/10, -7/100, -1/20, 1/50, -1/40)\n",
      "(-3/10, 3/100, 1/20, -1/75, 1/40)\n"
     ]
    }
   ],
   "source": [
    "N = matrix(QQ, [H2(u) for u in strata_vecs])  # converting all 24 graphs into this fixed basis\n",
    "for n in N:\n",
    "    print(n)"
   ]
  },
  {
   "cell_type": "code",
   "execution_count": 13,
   "metadata": {
    "collapsed": false
   },
   "outputs": [
    {
     "data": {
      "text/plain": [
       "(0, 1, 2, 6, 9)"
      ]
     },
     "execution_count": 13,
     "metadata": {
     },
     "output_type": "execute_result"
    }
   ],
   "source": [
    "pivs = N.transpose().pivots(); pivs  # this tells us that entries (0, 1, 2, 6, 9) of our list form a basis of H_2"
   ]
  },
  {
   "cell_type": "code",
   "execution_count": 14,
   "metadata": {
    "collapsed": false
   },
   "outputs": [
    {
     "data": {
      "text/plain": [
       "5"
      ]
     },
     "execution_count": 14,
     "metadata": {
     },
     "output_type": "execute_result"
    }
   ],
   "source": [
    "Nsub = matrix(QQ, [H2(strata_vecs[u]) for u in pivs])\n",
    "Nsub.rank()"
   ]
  },
  {
   "cell_type": "code",
   "execution_count": 15,
   "metadata": {
    "collapsed": false
   },
   "outputs": [
    {
     "data": {
      "text/plain": [
       "Graph :\n",
       " ╭────╮              \n",
       " │    │ ╭────╮       \n",
       " │    │ │    │ ╭────╮\n",
       " 11   1213   1415   16   \n",
       "╭┴─╮ ╭┴─┴─╮ ╭┴─┴─╮ ╭┴──╮ \n",
       "│0 │ │0   │ │0   │ │0  │ \n",
       "╰┬┬╯ ╰┬───╯ ╰┬───╯ ╰┬┬┬╯ \n",
       " 22   2      2      112  \n",
       "             \n",
       "Polynomial : 1\n",
       "Graph :\n",
       " ╭────╮              \n",
       " │    │ ╭─────────╮  \n",
       " │    │ │    ╭──────╮\n",
       " 11   1213   15   1416  \n",
       "╭┴─╮ ╭┴─┴─╮ ╭┴─╮ ╭┴─┴─╮ \n",
       "│0 │ │0   │ │0 │ │0   │ \n",
       "╰┬┬╯ ╰┬───╯ ╰┬┬╯ ╰┬┬──╯ \n",
       " 22   2      22   11    \n",
       "             \n",
       "Polynomial : 1\n",
       "Graph :\n",
       " ╭────╮              \n",
       " │    │ ╭────╮       \n",
       " │    │ │    │ ╭────╮\n",
       " 11   1213   1415   16  \n",
       "╭┴─╮ ╭┴─┴─╮ ╭┴─┴─╮ ╭┴─╮ \n",
       "│0 │ │0   │ │0   │ │0 │ \n",
       "╰┬┬╯ ╰┬───╯ ╰┬┬──╯ ╰┬┬╯ \n",
       " 22   2      22     11  \n",
       "             \n",
       "Polynomial : 1\n",
       "Graph :\n",
       " ╭─────────╮         \n",
       " │    ╭──────╮       \n",
       " │    │    │ │ ╭────╮\n",
       " 11   13   121415   16  \n",
       "╭┴─╮ ╭┴─╮ ╭┴─┴─┴─╮ ╭┴─╮ \n",
       "│0 │ │0 │ │0     │ │0 │ \n",
       "╰┬┬╯ ╰┬┬╯ ╰┬─────╯ ╰┬┬╯ \n",
       " 22   22   2        11  \n",
       "             \n",
       "Polynomial : 1\n",
       "Graph :\n",
       " ╭────╮              \n",
       " │    │ ╭────╮       \n",
       " │    │ │    │ ╭────╮\n",
       " 11   1213   1415   16  \n",
       "╭┴─╮ ╭┴─┴─╮ ╭┴─┴─╮ ╭┴─╮ \n",
       "│0 │ │0   │ │0   │ │0 │ \n",
       "╰┬┬╯ ╰┬┬──╯ ╰┬───╯ ╰┬┬╯ \n",
       " 22   22     2      11  \n",
       "             \n",
       "Polynomial : 1"
      ]
     },
     "execution_count": 15,
     "metadata": {
     },
     "output_type": "execute_result"
    }
   ],
   "source": [
    "prod(M[u]._unicode_art_() for u in pivs)  # the 5 graphs below form a basis"
   ]
  },
  {
   "cell_type": "markdown",
   "metadata": {
    "collapsed": false
   },
   "source": [
    "Intersection numbers with boundary divisors"
   ]
  },
  {
   "cell_type": "code",
   "execution_count": 18,
   "metadata": {
    "collapsed": false
   },
   "outputs": [
    {
     "data": {
      "text/plain": [
       " ╭────╮\n",
       " 9    10     \n",
       "╭┴─╮ ╭┴────╮ \n",
       "│0 │ │0    │ \n",
       "╰┬┬╯ ╰┬┬┬┬┬╯ \n",
       " 22   11222  \n",
       " ╭─────╮\n",
       " 9     10    \n",
       "╭┴──╮ ╭┴───╮ \n",
       "│0  │ │0   │ \n",
       "╰┬┬┬╯ ╰┬┬┬┬╯ \n",
       " 222   1122  \n",
       " ╭──────╮\n",
       " 9      10   \n",
       "╭┴───╮ ╭┴──╮ \n",
       "│0   │ │0  │ \n",
       "╰┬┬┬┬╯ ╰┬┬┬╯ \n",
       " 2222   112  \n",
       " ╭───────╮\n",
       " 9       10  \n",
       "╭┴────╮ ╭┴─╮ \n",
       "│0    │ │0 │ \n",
       "╰┬┬┬┬┬╯ ╰┬┬╯ \n",
       " 22222   11  \n",
       " ╭────╮\n",
       " 9    10     \n",
       "╭┴─╮ ╭┴────╮ \n",
       "│0 │ │0    │ \n",
       "╰┬┬╯ ╰┬┬┬┬┬╯ \n",
       " 12   12222  \n",
       " ╭─────╮\n",
       " 9     10    \n",
       "╭┴──╮ ╭┴───╮ \n",
       "│0  │ │0   │ \n",
       "╰┬┬┬╯ ╰┬┬┬┬╯ \n",
       " 122   1222  "
      ]
     },
     "execution_count": 18,
     "metadata": {
     },
     "output_type": "execute_result"
    }
   ],
   "source": [
    "Divs = all_pure_strata(0, 1, (1,1,2,2,2,2,2))  # list of all codim 1 strata in Mbar_{0,7}/S_2 x S_5\n",
    "MDivs = [Graphtodecstratum(G) for G in Divs]       # convert to more readable format\n",
    "prod(gam.gamma._unicode_art_() for gam in MDivs)"
   ]
  },
  {
   "cell_type": "code",
   "execution_count": 17,
   "metadata": {
    "collapsed": false
   },
   "outputs": [
    {
     "data": {
      "text/plain": [
       "6"
      ]
     },
     "execution_count": 17,
     "metadata": {
     },
     "output_type": "execute_result"
    }
   ],
   "source": [
    "len(Divs)"
   ]
  },
  {
   "cell_type": "code",
   "execution_count": 21,
   "metadata": {
    "collapsed": false
   },
   "outputs": [
    {
     "data": {
      "text/plain": [
       "[3, 4, 5, 6, 7, 8]"
      ]
     },
     "execution_count": 21,
     "metadata": {
     },
     "output_type": "execute_result"
    }
   ],
   "source": [
    "num_div_strata = [num_of_stratum(G, 0, 1, (1,1,2,2,2,2,2)) for G in Divs]; num_div_strata"
   ]
  },
  {
   "cell_type": "code",
   "execution_count": 26,
   "metadata": {
    "collapsed": false
   },
   "outputs": [
    {
     "name": "stdout",
     "output_type": "stream",
     "text": [
      "[   0    0  -48  240   48    0]\n",
      "[ -24  -24    0  240    0   24]\n",
      "[  24  -24   96 -240    0    0]\n",
      "[  24    0    0    0   48  -12]\n",
      "[   0  -24   48    0    0   12]\n",
      "[   0    0  -48  240   48    0]\n",
      "[ -48   48   48 -240    0    0]\n",
      "[ -24   24    0    0   24    0]\n",
      "[ -48    0   48    0  -24   24]\n",
      "[   0   48  -48    0    0    0]\n",
      "[   0   48  -48    0    0    0]\n",
      "[  72  -24    0    0    0    0]\n",
      "[  72  -24    0    0    0    0]\n",
      "[  24    0    0    0   48  -12]\n",
      "[  24    0    0    0  -24   12]\n",
      "[  24    0    0    0   48  -12]\n",
      "[ -24   24    0    0   24    0]\n",
      "[ -24   24    0    0  -48   24]\n",
      "[  72  -24    0    0    0    0]\n",
      "[  72  -24    0    0    0    0]\n",
      "[  72  -24    0    0    0    0]\n",
      "[  72  -24    0    0    0    0]\n",
      "[  24    0    0    0   48  -12]\n",
      "[  24    0    0    0  -24   12]\n"
     ]
    }
   ],
   "source": [
    "pairmat = matrix(QQ, pairing_submatrix(tuple(num_strata), tuple(num_div_strata), 0, 3, (1,1,2,2,2,2,2)))\n",
    "print(pairmat.str())"
   ]
  },
  {
   "cell_type": "code",
   "execution_count": 0,
   "metadata": {
    "collapsed": false
   },
   "outputs": [
   ],
   "source": [
   ]
  }
 ],
 "metadata": {
  "kernelspec": {
   "display_name": "SageMath 9.8",
   "language": "sagemath",
   "metadata": {
    "cocalc": {
     "description": "Open-source mathematical software system",
     "priority": 1,
     "url": "https://www.sagemath.org/"
    }
   },
   "name": "sage-9.8",
   "resource_dir": "/ext/jupyter/kernels/sage-9.8"
  },
  "language_info": {
   "codemirror_mode": {
    "name": "ipython",
    "version": 3
   },
   "file_extension": ".py",
   "mimetype": "text/x-python",
   "name": "python",
   "nbconvert_exporter": "python",
   "pygments_lexer": "ipython3",
   "version": "3.11.1"
  }
 },
 "nbformat": 4,
 "nbformat_minor": 4
}

============
OUTPUT FROM SAGE, AS A SPREADSHEET FOR EXCEL PROCESSING, WITH OUTPUT
============

These are the numbers that are output from the admcycles Sage worksheet above, which is also at https://cocalc.com/share/public_paths/f82524a74005bf578689513cd6c9ef6e662e7d1c

This is a tab-delimited spreadsheet

"Intersection matrix on M_{0,7}/S_2\times S_5"																									
	1	2	3	4	5	6	7	8	9	10	11	12	13	14	15	16	17	18	19	20	21	22	23	24	"Curves, numbered as in the admcycles spreadsheet"
D_12^2	2	2	-2	0	0	2	-2	0	0	0	0	0	0	0	0	0	0	0	0	0	0	0	0	0	
D_12^3	-1	0	2	0	1	-1	1	0	1	-1	-1	0	0	0	0	0	0	0	0	0	0	0	0	0	
D_12^4	0	-1	-1	0	-1	0	2	1	0	2	2	-1	-1	0	0	0	1	1	-1	-1	-1	-1	0	0	
D_12^5	0	-2	2	2	0	0	-4	-2	-4	0	0	6	6	2	2	2	-2	-2	6	6	6	6	2	2	
D_1^2	2	0	0	2	0	2	0	1	-1	0	0	0	0	2	-1	2	1	-2	0	0	0	0	2	-1	
D_1^3	0	2	0	-1	1	0	0	0	2	0	0	0	0	-1	1	-1	0	2	0	0	0	0	-1	1	
	*			*		*								*		*							*		contracted curves
																									
relation	0	0	0	0	0	0	0	0	0	0	0	0	0	0	0	0	0	0	0	0	0	0	0	0	
verifying that the intersection number with the linear combination 2*D_12^5+12D_12^4+24D_12^3+20D_12^2-8D_1^2-12D_1^3 is zero																									
																									
D_12^2						2	0																		
D_12^3	intersections with					-1	0																		
D_12^4	contracted curves					0	0																		
D_12^5						0	2																		
D_1^2						2	2																		
D_1^3						0	-1																		
																									
D_12^2+2*D_12^3	pullback of delta_2					0	0																		
D_12^4	pullback of delta_4					0	0																		
D_12^5+2*D_1^3	pullback of delta_5					0	0																		
D_1^2+2*D_12^3+2D_1^3	pullback of gamma					0	0																		
																									
"Intersection matrix on M_{0,7}/S_2\times S_5 with those classes that are pullbacks from H/S_2\times S_5"																									
	1	2	3	4	5	6	7	8	9	10	11	12	13	14	15	16	17	18	19	20	21	22	23	24	"Curves, numbered as in the "
D_12^2+2*D_12^3	0	2	2	0	2	0	0	0	2	-2	-2	0	0	0	0	0	0	0	0	0	0	0	0	0	admcycles spreadsheet
D_12^4	0	-1	-1	0	-1	0	2	1	0	2	2	-1	-1	0	0	0	1	1	-1	-1	-1	-1	0	0	
D_12^5+2D_1^3	0	2	2	0	2	0	-4	-2	0	0	0	6	6	0	4	0	-2	2	6	6	6	6	0	4	
D_1^2+2*D_12^3+2D_1^3	0	4	4	0	4	0	2	1	5	-2	-2	0	0	0	1	0	1	2	0	0	0	0	0	1	
	*		=2	*	=2	*	=2*8				=10		=12	*		*	=8		=12	=12	=12	=12	*	=15	
																									
pullback of relation	0	0	0	0	0	0	0	0	0	0	0	0	0	0	0	0	0	0	0	0	0	0	0	0	
8gamma-2delta_5-12delta_4-20delta_2					Thus can exclude the last line																				
																									
delta_2	2	0	2	-2	0	0	0		simplify		2	0	1	-1	0	0	0		a			coefficients			
delta_4	-1	1	0	2	-1	0	1				-1	1	0	1	-1	0	1		b						
delta_5	2	-2	0	0	6	4	2				2	-2	0	0	6	1	2		c						
																									
											2a-b+2c		a		6c-b		b+2c					resulting coefficients			
												b-2c		b-a		c									
			all non-negative																						
			b>a		6c>b>2c			2a+2c>b																	
																									
			"4 inequalities, vertices if 2 are equalities"																						
																									
			b=a and 6c=b					"6,6,1"		b>2c ok	2a+2c>bok														
			b=a and b=2c					"2,2,1"		6c>b ok		"2a+2c>b gives 6>2, ok"													
			b=a and 2a+2c=b					"1,1,x"		"2+2x=1 gives x<0, oops"															
			6c=b and 2a+2c=b					"x,6,1"		2x+2=6 gives x=2					"2,6,1"		b>a ok		b>2c ok						
			6c=b and b=2c					"b=c=0, also a=0, oops"																	
			b=2c and 2a+2c=b					"0,2,1"		b>a ok		6c>b ok

finding generators of the cone								"b=6, c=12, so a+c>b automatic, so a=2 or a=6, so (6,6,12) or (2,6,12) (already accounted for)"																	
																									
			"(1,1,0)"		"(1,1,2)"		"(1,3,1)"		"(1,3,6)"

	b=c/2		b=a		"1,1,2"		ok																		
	b=c/2		a=b/3		"1,3,6"		ok																		
	b=c/2		a+c=b		"-1,1,2"																				
	b=a		a+c=b		"1,1,0"		ok																		
	b=a		a=b/3		"0,0,1"		fails b>c/2																		
	a=b/3		a+c=b